\newtheorem{theorem}{Theorem}[section]
\newtheorem{thm}[theorem]{Theorem}
\newtheorem{lem}[theorem]{Lemma}
\newtheorem{remark}[theorem]{Remark}
\newtheorem{proposition}[theorem]{Proposition}
\newtheorem{prop}[theorem]{Proposition}
\newtheorem{corollary}[theorem]{Corollary}
\theoremstyle{definition}
\newtheorem{definition}[theorem]{Definition}
\newtheorem{defn}[theorem]{Definition}
\newtheorem{ex}[theorem]{Example}
\newtheorem{ques}[theorem]{Question}
\theoremstyle{remark}
\numberwithin{equation}{section}
 \DeclareMathOperator{\RE}{Re}
 \DeclareMathOperator{\IM}{Im}
 \DeclareMathAlphabet{\mathpzc}{OT1}{pzc}{m}{it}
 \DeclareMathAlphabet{\mathsfsl}{OT1}{cmss}{m}{sl}
  \newcommand{\DR}{\mathbb{D}}
  \newcommand{\FH}{\mathfrak{H}}
\newcommand{\dif}{\mathrm{d}}
\newcommand{\abs}[1]{\left\vert#1\right\vert}
\newcommand{\set}[1]{\left\{#1\right\}}
\newcommand{\norm}[1]{\left\Vert#1\right\Vert}
\newcommand{\E}{\mathbb{E}}
\newcommand{\R}{\mathbb{R}}
 \newcommand{\tensor}[1]{\mathsf{#1}}
 \newcommand{\Rnum}{\mathbb{R}}
 \newcommand{\Cnum}{\mathbb{C}}
 \newcommand{\Nnum}{\mathbb{N}}
 \newcommand{\innp}[1]{\langle {#1}\rangle}
\newcommand{\Be}{\begin{equation}}
\newcommand{\Ee}{\end{equation}}
\newcommand{\Bs}{\begin{split}}
\newcommand{\Es}{\end{split}}
\newcommand{\Bes}{\begin{equation*}}
\newcommand{\Ees}{\end{equation*}}
\newcommand{\BT}{\begin{thm}}
\newcommand{\ET}{\end{thm}}
\newcommand{\Bp}{\begin{proof}}
\newcommand{\Ep}{\end{proof}}
\newcommand{\BL}{\begin{lem}}
\newcommand{\EL}{\end{lem}}
\newcommand{\BP}{\begin{proposition}}
\newcommand{\EP}{\end{proposition}}
\newcommand{\BC}{\begin{corollary}}
\newcommand{\EC}{\end{corollary}}
\newcommand{\BR}{\begin{remark}}
\newcommand{\ER}{\end{remark}}
\newcommand{\BD}{\begin{defn}}
\newcommand{\ED}{\end{defn}}
\newcommand{\BI}{\begin{itemize}}
\newcommand{\EI}{\end{itemize}}
\newcommand{\mi}{{\rm i}}
\begin{document}

\title[Complex Multiple Wiener-It\^o Chaos]{Complex Wiener-It\^o Chaos Decomposition Revisited }
\author[Y. Chen]{Yong CHEN}
 \address{College of Mathematics and Information Science, Jiangxi Normal University, Nanchang, 330022, Jiangxi, China}
\email{zhishi@pku.org.cn}
\author[Y. Liu]{Yong LIU}
 \address{LMAM, School of Mathematical Sciences, Peking University, Beijing, 100871, China }
\email{liuyong@math.pku.edu.cn}
\begin{abstract}  In this article, some properties of complex Wiener-It\^o multiple integrals and complex Ornstein-Uhlenbeck operators and semigroups are obtained. Those include Stroock's formula,  Hu-Meyer formula, Clark-Ocone formula and the hypercontractivity of complex Ornstein-Uhlenbeck semigroups. As an application, several expansions of the fourth moments of complex Wiener-It\^o  multiple integrals are given.\\

{\bf Keywords:} Complex Hermite polynomials, Complex Gaussian isonormal processes,  Complex Wiener-It\^o Multiple Integrals, Complex Ornstein-Uhlenbeck operators and semigroups\\

{\bf MSC 2000:} 60H07; 60F25; 62M09.
\end{abstract}
\maketitle
\tableofcontents


\section{Introduction}
The chaos decomposition is an elegant and profound theory in stochastic analysis.  As a powerful analytic approach,  it has been applied to many fields,  for example,  mathematical physics, statistics and mathematical finance (see \cite{DiOP}\cite{HP-A}\cite{hyz 17}\cite{hy 97}\cite{malliavin}\cite{janson}\cite{np}\cite{Nualart} and references therein).

Wiener published the seminal article of this theory \textit{The Homogenous Chaos} \cite{wie 38} in 1938.  His original idea is that a singular  or noisy single process can be decomposed  into  a series of the usual monomials of some \textit{simply} random variables called polynomial chaos.  Moreover, these polynomial chaos is represented by multiple Wiener integral named by It\^o \cite{ito 51}. The main methods are the theory of  generalized harmonic analysis developed by himself and a generalized space-time Birkhoff's individual ergodic theorem.  Actually,  \textit{chaos}, in Wiener's mind, implies more meanings including randomness, singularity even disorder. 

Cameron and Martin, in 1947 \cite{cm 47}, constructed a complete orthogonal basis on $L^2(C_0[0,1], P^B)$,
\begin{equation}\label{cm}
 \Phi_{m,p}=H_m\big(\int_0^1 \alpha_p(s)\dif B(s)\big),
\end{equation}
where, $(C_0[0,1], P^B)$ is the classic Wiener space on $[0,1]$, $H_m$ is the Hermite polynomial of $m$ degree  which is defined as $H_m(x)=(-1)^m  e^{x^2/2}\frac{\dif^m}{\dif x^m}e^{-x^2/2},\,m=0,1,\dots $, $\alpha_p$ is an orthogonal basis on $L^2(0,1)$ and $\int_0^1 \alpha_p(s)\dif B(s)$ is the It\^o stochastic integral with respect to Wiener process on $[0,1]$.

In 1951 \cite{ito 51},  It\^o introduced a stochastic multiple integral and modified Wiener's one \cite{wie 38}. It\^o showed the orthogonality of his multiple integrals of different degrees, especially the Hermite polynomial in Eq. (\ref{cm}) can be expressed by this  multiple integral as follows \cite{ito 51} \cite{Nualart}:
$$
 H_m\big(\int_0^1 \alpha_p(s)\dif B(s)\big)= \int_{[0,1]^m} \alpha_p(t_1)\cdots\alpha_p(t_m)\dif B(t_1)\cdots \dif B(t_m).
$$
Moreover, It\^o showed a relation of the above multiple integrals and the iterated It\^o integrals. 
 Therefore, he gave a decomposition of orthogonal direct sum of $L^2$ functionals of Gaussian system, which is the standard form in the present textbooks and  literature.

Being motivated in part by constructive quantum field theory, in 1956 \cite{segal}, Segal  tried to develop a theory of integrations and harmonic analysis on Hilbert spaces. Although Segel's theory is only based on finite additive (cylindrical) measure, he indicated the construction equivalence  between the square integrable functions on a real Hilbert space $\FH_{\R}$ and the algebra of symmetric tensors over $\FH$, the complexification of $\FH_{\R}$. This idea is a key point of the definition of  multiple stochastic integrals on abstract spaces. In  1965 \cite{Gross}, Gross established the abstract Wiener space, which provides a proper measure theory and effective analytic framework on infinite dimensional Gaussian spaces. 

In 1987, in a short note \cite{str 87}, Stroock gave a useful approach to compute the Wiener-It\^o chaos decomposition by Malliavin' s derivative (or Gross-Sobolev derivative) and divergence operator, which is named as Stroock's formula \cite{Nualart} \cite{np}. This formula sets up a deep link between chaos decomposition and Malliavin calculus.  

One can summarize all of the above theories in the following theorem (\cite{ito 51}\cite{Nualart}\cite{str 87}\cite{np}\cite{hyz 17}).

\begin{thm}
  Let $X=\{X(h), h\in \FH_\mathbb{R}\}$ be an isonormal Gaussian process over $\FH_\mathbb{R}$.    \begin{itemize}
    \item[\textup{(i)}]  The linear space generated by the class $\{H_n(X(h)): n\ge 0, h\in \FH_\mathbb{R}, \|h\|_{\FH_\mathbb{R}}=1\}$ is dense in $L^q(\Omega, \sigma(X), P)$, $q\in [1,\infty)$.      
    
    \item[\textup{(ii)}] (Wiener-It\^o chaos decomposition) \ Denote $\mathcal{H}_n$ by the closed linear subspace of $L^2(\Omega, \sigma(X), P)$ generated by $\{H_n(X(h)): h\in \FH_\mathbb{R}, \|h\|_{\FH_\mathbb{R}}=1\}$. One has that 
    $$
     L^2(\Omega, \sigma(X), P)=\bigoplus_{n=0}^\infty\mathcal{H}_n.
    $$
    This means that every $F\in L^2(\Omega, \sigma(X), P)$ admits a unique expansion of the type $F=E(F)+\sum_{n=1}^\infty F_n$, where $F_n\in \mathcal{H}_n$ and the series converges in $L^2(\Omega, \sigma(X), P)$. 
    \item[\textup{(iii)}] (Stroock's formula) Denote $\FH_\mathbb{R}^{\odot n}$ by the $n$ times symmetric tensor space of $\FH_\mathbb{R}$ equipped with modified norm $\sqrt{n!}\|\cdot\|_{\FH_\mathbb{R}^{\otimes n}}$. The $n$-th multiple integral of $f\in \FH_\mathbb{R}^{\odot n}$ with respect to $X$ defined by $\mathcal{I}_n(f)=\delta^p(f)$, where $\delta^n$ is the divergence operator of order $n$. Then, every $F\in L^2(\Omega, \sigma(X), P)$ can be expanded as 
    $$
     F=E(F)+\sum_{n=1}^\infty \mathcal{I}_n(f_n),
    $$
   for some unique collection of kernels $f_n\in \FH_\mathbb{R}^{\odot n}$, $n\ge 1$. Moreover, if $F\in \mathbb{D}^{n,2}$ for some $n\ge 1$, then $f_p=\frac{1}{p!}E(D^pF)$ for all $p\le n$, where $D^n$ is the $n$-th Malliavin derivative operator and $\mathbb{D}^{n,2}$ is its domain in $L^2(\Omega)$.
   \end{itemize}
\end{thm}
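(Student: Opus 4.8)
The plan is to establish the three statements in sequence, each building on the previous one, using the classical relationship between Hermite polynomials, multiple Wiener--It\^o integrals, and the exponential (stochastic exponential) family.

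\medskip

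\emph{Step 1: Density (part (i)).} First I would recall that for $h\in\FH_\R$ with $\|h\|=1$, the random variable $X(h)$ is a standard Gaussian, so the exponential $\exp(\lambda X(h)-\lambda^2/2)$ has the generating-function expansion $\sum_{n\ge 0}\frac{\lambda^n}{n!}H_n(X(h))$. The key point is that the family $\{\exp(X(h)):h\in\FH_\R\}$ is total in $L^2(\Omega,\sigma(X),P)$: if $G\in L^2$ is orthogonal to all such exponentials, then the Laplace transform of the finite-dimensional distributions of $X$ (twisted by $G$) vanishes, forcing $G=0$ by the injectivity of the Laplace transform on measures. Since each $\exp(X(h))$ is an $L^2$-limit of linear combinations of $H_n(X(h))$, the linear span of $\{H_n(X(h))\}$ is dense in $L^2$; the extension to $L^q$, $q\in[1,\infty)$, follows from hypercontractivity / uniform integrability of polynomial approximations, or from a direct truncation argument. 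I expect this step to be routine.

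\medskip

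\emph{Step 2: Orthogonal decomposition (part (ii)).} Here I would show that the spaces $\mathcal H_n$ are mutually orthogonal and that their sum is all of $L^2$. Orthogonality $\mathcal H_m\perp\mathcal H_n$ for $m\neq n$ reduces, by polarization and a density argument, to computing $\E[H_m(X(h))H_n(X(g))]$ for unit vectors $h,g$; using the generating function from Step 1 and $\E[\exp(\lambda X(h)+\mu X(g))]=\exp(\lambda\mu\langle h,g\rangle_{\FH_\R}+(\lambda^2+\mu^2)/2)$, one reads off $\E[H_m(X(h))H_n(X(g))]=\delta_{mn}\,n!\,\langle h,g\rangle^n$, giving orthogonality. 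Completeness is exactly Step 1 together with the observation that every monomial $X(h_1)\cdots X(h_k)$ lies in $\bigoplus_{n\le k}\mathcal H_n$ (prove by induction on $k$ using the three-term recurrence for Hermite polynomials to write products of $X(h)$'s in terms of Hermite polynomials). Uniqueness of the expansion is immediate from orthogonality.

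\medskip

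\emph{Step 3: Identification of kernels via Malliavin calculus (part (iii)).} The multiple integral $\mathcal I_n(f)=\delta^n(f)$ for symmetric $f$ is well-defined on $\FH_\R^{\odot n}$, and the key structural fact to record is the isometry $\E[\mathcal I_n(f)\mathcal I_m(g)]=\delta_{mn}\,n!\,\langle f,g\rangle_{\FH_\R^{\otimes n}}$, together with the identity $\mathcal I_n(h^{\otimes n})=H_n(X(h))$ for $\|h\|=1$ (and more generally $\mathcal I_n(h^{\otimes n})=\|h\|^n H_n(X(h)/\|h\|)$). This shows $\mathcal H_n=\{\mathcal I_n(f):f\in\FH_\R^{\odot n}\}$, so combining with part (ii) gives the existence and uniqueness of the kernels $f_n$. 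For the formula $f_p=\frac{1}{p!}\E[D^p F]$, the crucial input is the commutation relation $D\,\mathcal I_n(f)=n\,\mathcal I_{n-1}(f)$ understood via contraction of one variable, equivalently $D^p\mathcal I_n(f)=\frac{n!}{(n-p)!}\mathcal I_{n-p}(f)$ for $p\le n$; taking expectations kills all terms with $n>p$ (they lie in a positive chaos and are centered) and leaves $\E[D^p\mathcal I_p(f_p)]=p!\,f_p$. One then justifies interchanging $D^p$ with the infinite sum $F=\E F+\sum_n\mathcal I_n(f_n)$ under the hypothesis $F\in\DR^{n,2}$, using that the series $\sum_n \frac{n!}{(n-p)!}\mathcal I_{n-p}(f_n)$ converges in $L^2$ precisely because $\sum_n n!\,n^{2p}\|f_n\|^2<\infty$ on $\DR^{n,2}$ (with $p\le n$).

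\medskip

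\emph{Main obstacle.} The conceptually delicate point is Step 3: making rigorous the term-by-term differentiation of the chaos series and the commutation relation $D^p\mathcal I_n(f)=\frac{n!}{(n-p)!}\mathcal I_{n-p}(f)$, including the bookkeeping that for non-symmetric placement of variables one must symmetrize and that $\E[D^pF]$ lands in $\FH_\R^{\otimes p}$ but is automatically symmetric. Everything else is a careful but standard unwinding of the exponential generating function and the Wiener--It\^o isometry.
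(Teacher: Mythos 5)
Your sketch is essentially correct, and it is the classical argument: the paper itself gives no proof of this theorem (it appears in the introduction as a summary of known results, cited to It\^o, Stroock, Nualart, Nourdin--Peccati and Hu), and your Steps 1--3 reproduce exactly the route in those references --- totality of the exponential family $\exp(X(h))$ via injectivity of the Laplace transform, orthogonality of the $\mathcal{H}_n$ from the generating function identity $E[H_m(X(h))H_n(X(g))]=\delta_{mn}\,n!\,\langle h,g\rangle^n$, identification of $\mathcal{H}_n$ with $\{\delta^n(f):f\in\FH_{\mathbb{R}}^{\odot n}\}$ through $\mathcal{I}_n(h^{\otimes n})=H_n(X(h))$ and polarization, and term-by-term differentiation for Stroock's formula. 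The only points requiring the care you already flag are the $L^q$-density for $q>2$ (hypercontractivity to sum the Hermite expansion of the exponentials in $L^q$) and the justification that $D^pF=\sum_{n\ge p}\frac{n!}{(n-p)!}\mathcal{I}_{n-p}(f_n)$ converges in $L^2(\Omega,\FH_{\mathbb{R}}^{\otimes p})$ when $F\in\mathbb{D}^{p,2}$ before taking expectations; both are standard and your outline handles them adequately.
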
 

This theorem not only clearly characterizes the square integrable functional on Wiener space through Hermite polynomials, Wiener-It\^o multiple integrals and Malliavin's derivative  (or Gross-Sobolev derivative), but also reveals their intrinsic connection.  Moreover, these terms relate the exponential martingale, linear stochastic differential equation, Ornstein-Uhlenbeck operator, Ornstein-Uhlenbeck semigroup and the Wick product interestingly. 

On the one hand, it is well known that the solution of the following linear stochastic differential equation is an exponential martingale, which can be expanded by a serial of Hermite polynomials, 
$$
\dif x(t)=\phi(t)x(t)\dif B(t), \ \ x(0)=1.
$$
On the other hand, solving this equation by Picard's iteration, the $n$-fold iterated Wiener-It\^o's integrals is represented naturally by  $n$-degree Hermite polynomial in the expansion of the exponential martingales  \cite{gong}\cite{ks}\cite{ry}.   Moreover, this relation is also interpreted by Wick exponent and Wick product \cite{janson}.

One defines the gradient operator $D$ on $L^2(\mathbf{ R}^d, \dif \nu)$, where $\dif \nu$ is the standard Gaussian measure on $\mathbf{ R}^d$. The divergence operator $\delta$ is the adjoint operator with respect to the inner product $\int_{\mathbf{ R}^d}\langle\cdot, \cdot\rangle\dif \nu$ by using integration by parts.  OU operator $\Delta_{OU}\stackrel{\rm def}{=}-\delta D$, is the infinitesimal generator of OU process (or OU semigroup) possessing the invariable distribution $\dif \nu$.  Moreover, the Hermite polynomials of several variables are the eigenfunctions  of $\Delta_{OU}$ and a complete orthonormal basis on $L^2(\mathbf{ R}^d, \dif \nu)$.

One can lift this picture to infinite dimensional Wiener spaces, and define an infinite dimensional OU operator and OU semigroup, which play the same role as the Laplacian operator $\Delta$ and the heat semigroup in Fourier analysis on $\mathbf{ R}^d$. Using the spectral property of OU semigroup essentially associated with chaos decomposition, one obtains the multiplier theorem and then establishes the theory of Sobolev spaces on Wiener spaces. Here, the hypercontractivity property of OU semigroup and Meyer's inequality are two key tools \cite{Nualart}\cite{np}\cite{hyz 17}.     

These real analytic theories on Wiener spaces have a wide range of important application in the fields of probability theory, stochastic (partial) differential equation, mathematical finance and mathematical physics etc. 

In fact,  in 1953 \cite{ito},  It\^o gave the theory of complex multiple integrals. His main idea was using complex Hermite polynomials to define multiple integrals and then obtaining the chaos decomposition of square integrable functions on complex Gaussian system. In recent years, there are more and more interesting applications of complex chaos decomposition and its related topics (see \cite{af}\cite{cgxx}\cite{chw 17}\cite{cgg}\cite{deck1}\cite{deck2}\cite{hin 17}\cite{is}\cite{kallenberg}).  However,  comparing with the theory of real chaos decomposition,  the corresponding  complex theory is far away to be completed.   

This paper is arranged as follows. In section 2,  after  some preliminaries on complex multiple Wiener-It\^o integrals in subsection 2.1, one obtains complex Stroock's formula in subsection 2.2. The product formula of complex Weiner-It\^o integrals is introduce in subsection {\color{blue}2.3}.  Based on this formula, Wick product, Hu-Meyer formula and the relation between real and complex Weiner-It\^o integrals are given in subsection 2.3, 2.4 and 2.5 respectively.   In section 3,  by means of complex Malliavin divergence operator and Ornstein-Uhlenbeck  operator, Clark-Ocone formula, the hypercontracivity of complex Ornstein-Uhlenbeck semigroups and the expansion of the fourth moment of complex Weiner-It\^o integrals in terms of  Malliavin derivatives and contraction of their kernel are obtained in subsection 3.1, 3.2 and 3.3 respectively.  Finally, section 4 is an appendix, some formulae and facts of complex Hermite polynomials are collected for the reader's convenience.


In the rest of this note, all functions and random variables are complex valued and  Hilbert spaces are  complex inner product spaces in general unless stated precisely.


\section{Complex Wiener-It\^o multiple integrals and chaos decomposition}
In this section, the theory of  complex Wiener-It\^o chaos decomposition is rewriten in term of Gaussian isonormal processes and a version of Stroock's formula is shown.


\subsection{Complex Wiener-It\^o multiple integrals}

The starting point is complex Hermite polynomials.  A simple method to define complex Hermite polynomials is by means of the generating function.
\begin{defn}\it 
Let $z=x + \mi y$ with $x,y\in \Rnum$. Complex Hermite polynomials $J_{m,n}(z,\rho)$ are given by 
  \begin{equation}\label{gene}
    \exp\set{\lambda \bar{z} + \bar{\lambda}z-\rho |\lambda|^2}=\sum_{m=0}^{\infty}\sum_{n=0}^{\infty}\frac{\bar{\lambda}^m\lambda^n}{m!n!}J_{m,n}(z,\rho),\qquad \lambda\in \Cnum.
  \end{equation}
When $\rho=2$, one will often write $J_{m,n}(z)$ rather than $J_{m,n}(z,\rho)$.
\end{defn}

\begin{remark}
As far as the authors known,  
both the formula (\ref{gene}) and the name of complex Hermite polynomials  were introduced by It\^o  firstly in 1953 \cite{ito}. 
\end{remark}

It is well known that one can express $J_{m,n}(z,\rho) $ as follows:
\begin{equation}\label{jal}
  J_{m,n}(z,\,\rho)=\sum_{r=0}^{m\wedge n}(-1)^r r!{m \choose r}{n \choose r}z^{m-r}\bar{z}^{n-r}\rho^{r}.
\end{equation} Some well known properties of these polynomials are listed in Appendix.

Next, one need the definition of complex Gaussian isonormal processes.
Suppose that $\FH$ is a complex separable Hilbert space.  According to \cite{cl2,janson}, a complex Gaussian isonormal process $\set{Z(h):\,h\in \FH}$ is  a centered symmetric complex Gaussian family in $L^2(\Omega)$ such that
\begin{align*}
   E[ Z({h})^2]=0,\quad E[Z({g})\overline{Z({h})}]=\innp{{g}, {h}}_{\FH},\quad \forall {g},{h}\in \FH.
\end{align*} Especially, when $\FH=L^2(T,\mathcal{B},\nu)$ with $\nu$ non-atomic, one  says $\set{Z(A):\, A\in \mathcal{B},\, |\nu(A)|<\infty}$ a white noise with intensity $\nu$, where $Z(A)$ is understood as $Z(\mathrm{1}_{A})$.
   
Third, one need the definition of Wiener-It\^{o} chaos.  For each $m,n\ge 0$, one writes $\mathscr{H}_{m,n}(Z)$ to indicate the closed linear subspace of $L^2(\Omega)$ generated by the random variables of the type
   $$\set{J_{m,n}(Z( {h})):\, {h}\in \mathfrak{H},\,\norm{ {h}}_{\mathfrak{H} }=\sqrt2}$$ where $J_{m,n}(z)$ is the complex Hermite polynomial. The space $\mathscr{H}_{m,n}(Z)$ is called the Wiener-It\^{o} chaos of degree of $(m,n)$ of $Z$ (or say: $(m,n)$-th Wiener-It\^{o} chaos of $Z$).

    Take a complete orthonormal system $\set{ {e}_k}$ in $\mathfrak{H} $. For two sequences $\mathbf{m}=\set{m_k}_{k=1}^{\infty},\,\mathbf{n}=\set{n_k}_{k=1}^{\infty}$ of nonnegative integrals with finite sum, define a Fourier-Hermite polynomial
  \begin{equation}\label{fourier}
    \mathbf{J}_{\mathbf{m},\mathbf{n}}:=\prod_{k} \frac{1}{\sqrt{2^{m_k+n_k}m_k!n_k!}}J_{m_k,n_k}(\sqrt2 Z({e}_k)).
  \end{equation}

The following proposition presents the basis of $(m,n)$-th Wiener-It\^{o} chaos $\mathscr{H}_{m,n}(Z)$ . For its proof, please refer to Theorem 13.1 and Theorem 13.2  in \cite{ito}.
\begin{prop}\label{ppp1}
  Define $\mathbf{m},\,\mathbf{n}$ and $\set{{e}_k}$ be as above.  For any $m,n\ge0$ the random variables
   \begin{equation}\label{eq4}
      \set{\mathbf{J}_{\mathbf{m},\mathbf{n}}:\abs{\mathbf{m}} =m,\,\abs{\mathbf{n}}=n }
   \end{equation}
   form a complete orthonormal system in $\mathscr{H}_{m,n}(Z)$.
\end{prop}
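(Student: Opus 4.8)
The plan is to prove orthonormality and totality separately; in both parts the engine is the multiplicative structure of the generating function \eqref{gene} once its argument is expanded along the basis $\{e_k\}$. Two preliminary remarks: first, $\{Z(e_k)\}$ is a sequence of i.i.d.\ standard complex Gaussians — it is jointly Gaussian with $E[Z(e_j)\overline{Z(e_k)}]=\delta_{jk}$ and $E[Z(e_j)Z(e_k)]=0$, and vanishing covariance together with vanishing pseudo‑covariance forces independence; second, from \eqref{jal} one reads off the homogeneity $J_{m,n}(cz,c^2\rho)=c^{m+n}J_{m,n}(z,\rho)$ for $c>0$, so in particular $J_{m,n}(\sqrt2\,z,2)=2^{(m+n)/2}J_{m,n}(z,1)$ and \eqref{fourier} may be rewritten as $\mathbf{J}_{\mathbf{m},\mathbf{n}}=\prod_k (m_k!\,n_k!)^{-1/2}J_{m_k,n_k}(Z(e_k),1)$.

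\emph{Orthonormality.} Since $\mathbf{m},\mathbf{n}$ have finite support, $\mathbf{J}_{\mathbf{m},\mathbf{n}}\overline{\mathbf{J}_{\mathbf{m}',\mathbf{n}'}}$ is a finite product of functions of the individual $Z(e_k)$, so by independence $E[\mathbf{J}_{\mathbf{m},\mathbf{n}}\overline{\mathbf{J}_{\mathbf{m}',\mathbf{n}'}}]$ factors over $k$. Hence it suffices to prove the one‑variable relation $E[J_{m,n}(\sqrt2\,Z(e_k))\,\overline{J_{p,q}(\sqrt2\,Z(e_k))}]=\delta_{mp}\delta_{nq}\,2^{m+n}m!\,n!$, which is exactly the normalization chosen in \eqref{fourier}. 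This follows by multiplying \eqref{gene} evaluated at $\lambda$ by the complex conjugate of \eqref{gene} evaluated at $\mu$ (both with $z=\sqrt2\,Z(e_k)$, $\rho=2$), taking expectations — the expectation of $\exp\{(\lambda+\mu)\overline{W}+\overline{(\lambda+\mu)}W\}$ for a centered complex Gaussian $W$ with $E[W^2]=0$, $E|W|^2=2$ equals $\exp\{2|\lambda+\mu|^2\}$ — and matching the coefficients of $\bar\lambda^m\lambda^n\mu^p\bar\mu^q$ against the product of the two series \eqref{gene}. (This one‑variable orthogonality is also recorded in the Appendix.)

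\emph{Totality, membership.} Fix $d$ and $h=\sum_{k\le d}c_k e_k$. Writing $Z(h)=\sum_{k\le d}c_kZ(e_k)$ and $\|h\|^2=\sum_{k\le d}|c_k|^2$, the exponent in \eqref{gene} with $z=Z(h)$, $\rho=\|h\|^2$ splits as a sum over $k$, so \eqref{gene} factors:
\begin{equation*}
\exp\Bigl\{\lambda\overline{Z(h)}+\bar\lambda Z(h)-\|h\|^2|\lambda|^2\Bigr\}=\prod_{k\le d}\exp\Bigl\{(\lambda\bar c_k)\overline{Z(e_k)}+\overline{(\lambda\bar c_k)}\,Z(e_k)-|c_k|^2|\lambda|^2\Bigr\}.
\end{equation*}
Each factor on the right is the series \eqref{gene} for $J_{m_k,n_k}(Z(e_k),1)$; expanding and matching the coefficient of $\bar\lambda^m\lambda^n$, then using the rewriting of $\mathbf{J}_{\mathbf{m},\mathbf{n}}$ above, one obtains
\begin{equation*}
J_{m,n}\bigl(Z(h),\|h\|^2\bigr)=m!\,n!\sum_{|\mathbf{m}|=m,\ |\mathbf{n}|=n}\frac{\prod_k c_k^{m_k}\bar c_k^{n_k}}{\sqrt{\prod_k m_k!\,n_k!}}\;\mathbf{J}_{\mathbf{m},\mathbf{n}},
\end{equation*}
the sum over $(\mathbf{m},\mathbf{n})$ supported in $\{1,\dots,d\}$. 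When $\|h\|=\sqrt2$ the left‑hand side is a generator of $\mathscr{H}_{m,n}(Z)$, and for general $h\neq0$ it equals $(\|h\|/\sqrt2)^{m+n}$ times the generator $J_{m,n}(Z(\hat h),2)$ with $\hat h=\sqrt2\,h/\|h\|$ by the homogeneity identity; hence it lies in $\mathscr{H}_{m,n}(Z)$ for every $h$. Because the monomials $\prod_k c_k^{m_k}\bar c_k^{n_k}$ (with $|\mathbf{m}|=m$, $|\mathbf{n}|=n$) are linearly independent functions of $(c_1,\dots,c_d)\in\Cnum^d$, one can pick finitely many values of $(c_k)$ and invert the resulting Vandermonde‑type system to express each $\mathbf{J}_{\mathbf{m},\mathbf{n}}$ as a finite linear combination of values $J_{m,n}(Z(h),\|h\|^2)$; since $\mathscr{H}_{m,n}(Z)$ is closed, $\mathbf{J}_{\mathbf{m},\mathbf{n}}\in\mathscr{H}_{m,n}(Z)$.

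\emph{Totality, density.} It remains to see that the $\mathbf{J}_{\mathbf{m},\mathbf{n}}$ with $|\mathbf{m}|=m$, $|\mathbf{n}|=n$ span a dense subspace of $\mathscr{H}_{m,n}(Z)$, for which it is enough that every generator $J_{m,n}(Z(h),2)$, $\|h\|=\sqrt2$, lies in their closed linear span. Write $h=\sum_{k\ge1}c_ke_k$ and $h^{(d)}=\sum_{k\le d}c_ke_k$; then $Z(h)-Z(h^{(d)})=Z(h-h^{(d)})$ is a centered complex Gaussian with variance $\|h-h^{(d)}\|^2\to0$, hence $Z(h^{(d)})\to Z(h)$ in every $L^p(\Omega)$, and since $J_{m,n}$ is a polynomial in its two arguments and $\|h^{(d)}\|^2\to2$, this gives $J_{m,n}(Z(h^{(d)}),\|h^{(d)}\|^2)\to J_{m,n}(Z(h),2)$ in $L^2(\Omega)$. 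By the second display of the previous step each approximant is a finite linear combination of the $\mathbf{J}_{\mathbf{m},\mathbf{n}}$'s, which proves the claim. The only real work is bookkeeping: tracking the two conventions $\rho=2$ and $\rho=1$ and the factors $\sqrt2$, handling the side constraint $\|h\|=\sqrt2$ when passing from the generating‑function expansion to membership, and invoking the standard fact that $L^2$‑convergence of centered complex Gaussians upgrades to $L^p$‑convergence.
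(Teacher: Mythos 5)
Your proof is correct, and it is worth noting that the paper itself does not prove Proposition \ref{ppp1} at all: it simply defers to Theorems 13.1 and 13.2 of It\^o's 1953 paper. What you have written is in effect a self-contained reconstruction of that classical argument, transplanted to the isonormal setting: orthonormality by factorizing the expectation over the independent standard complex Gaussians $Z(e_k)$ and computing the one-variable inner products from the Gaussian exponential moment (equivalently, by quoting item 4) of the Appendix); membership of each $\mathbf{J}_{\mathbf{m},\mathbf{n}}$ in $\mathscr{H}_{m,n}(Z)$ by expanding the generating function \eqref{gene} multiplicatively along the basis, using the homogeneity $J_{m,n}(cz,c^2\rho)=c^{m+n}J_{m,n}(z,\rho)$ to remove the side constraint $\norm{h}_{\FH}=\sqrt2$, and then inverting the finite linear system coming from the linear independence of the monomials $\prod_k c_k^{m_k}\bar{c}_k^{n_k}$; density by truncating $h$ to $h^{(d)}$ and passing to the $L^2$ limit. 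This is correctly scoped: since $\mathscr{H}_{m,n}(Z)$ is by definition the closed span of the generators $J_{m,n}(Z(h))$, you only need the two inclusions you prove, and you never need the Hahn--Banach/characteristic-function argument that enters only in Theorem \ref{chaos}(i). Two small points are used implicitly and deserve a sentence each if this were written out in full: the a.s.\ complex linearity $Z(\sum_k c_k e_k)=\sum_k c_k Z(e_k)$, which follows from the covariance and pseudo-covariance structure by computing $E\abs{Z(ag+bh)-aZ(g)-bZ(h)}^2=0$; and the legitimacy of matching coefficients of $\bar{\lambda}^m\lambda^n$ (write $\lambda=re^{\mi\theta}$, so these terms are linearly independent functions of $(r,\theta)$) together with the interchange of expectation and the series in the orthonormality computation, both of which are standard and can also be bypassed entirely by citing the Appendix for the one-variable orthogonality relations.
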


Fourth, one has that the linear mapping
\begin{equation}\label{mulint}
  {I}_{m,n}\big((\hat{\otimes}_{k=1}^{\infty} {e}_k^{\otimes m_k})\otimes (\hat{\otimes}_{k=1}^{\infty}\bar{ {e}}_k^{\otimes n_k})\big)=\sqrt{\mathbf{m}!\mathbf{n}!}\mathbf{J}_{\mathbf{m},\mathbf{n}}
\end{equation} 
provides an isometry from the tensor product $\FH^{\odot m}\otimes \FH^{\odot n}$, equipped with the norm $\sqrt{m!n!}\norm{\cdot}_{\FH^{\otimes (m+n)}}$, onto the $(m,n)$-th Wiener-It\^{o} chaos $\mathscr{H}_{m,n}(Z)$. Here, $e_i\hat{\otimes} e_j$ means that the symmetrizing tensor product of $e_i$ and $e_j$ and $\FH^{\odot n}$ is the $n$ times symmetric tensor  product of $\FH$. Then one can define complex multiple Wiener-It\^{o} integrals as follows.
\begin{defn}\label{imn}\it 
For any $f\in \FH^{\odot m}\otimes \FH^{\odot n}$, one call $ {I}_{m,n}(f)$ the complex multiple Wiener-It\^{o} integral of $f$ with respect to $Z$, since when $\FH=L^2(T,\mathcal{B},\nu)$ with $\nu$ non-atomic, this linear mapping $I_{m,n}:\, \FH^{\odot m}\otimes \FH^{\odot n}\to L^2(\Omega)$ coincides with the classical 
complex multiple Wiener-It\^{o} integral \cite{cl2, ito}.

For any $f\in \mathfrak{H}^{\otimes (m+n)}$ given by $$f= \sum_{j_1,\dots,j_{m+n}}a_{j_1,\dots,j_{m+n}}e_{j_1}\otimes\dots e_{j_m}\otimes \bar{e}_{j_{m+1}}\otimes\dots\otimes \bar{e}_{j_{m+n}},$$
define $I_{m,n}(f)=I_{m,n}(\tilde{f})$ where $\tilde{f}$ is the symmetrization of $f$ in the sense of It\^{o} \cite{ito}:
\begin{equation}\label{ito-sense}
   \tilde{f}=\frac{1}{m!n!}\sum_{\pi}\sum_{\sigma}\sum_{j_1,\dots,j_{m+n}}a_{j_1,\dots,j_{m+n}}e_{j_{\pi(1)}}\otimes\dots\otimes e_{j_{\pi(m)}}\otimes \bar{e}_{j_{\sigma(1)}}\otimes\dots\otimes \bar{e}_{j_{\sigma(n)}},
\end{equation}
where $\pi$ and $\sigma$ run over all permutations of $(1,\dots,m)$ and $(m+1,\dots, m+n)$ respectively.
Especially, when $\FH=L^2(T,\mathcal{B},\nu)$ with $\nu$ non-atomic, one has that
\begin{align*}
  \tilde{f}(t_1,\dots,t_{m+n})=\frac{1}{m!n!}\sum_{\pi}\sum_{\sigma}f(t_{\pi(1)},\dots,t_{\pi(m)},t_{\sigma(1)},\dots,t_{\sigma(n)}).
\end{align*}
\end{defn}

\begin{ex} 
 Similar to the real case, the complex multiple integral connects naturally with an exponential martingale and a linear stochastic differential equation. Using the complex Hermite polynomials, one can express the exponential martingale of a conformal continuous local martingale as series.
 Suppose $x$ is a conformal local martingale and $x_0=0$ and $\lambda\in \Cnum$. Let $y(\lambda)$ be its exponential martingale:
 \begin{align*}
 y(\lambda)=\exp\set{\bar{\lambda} x+\lambda \bar{x}-\abs{\lambda}^2\innp{x,\,\bar{x}}}.
 \end{align*}
 Hence, one can express it as follows, 
 \begin{align*}
  y(\lambda)=\sum_{m=0}^{\infty}\sum_{n=0}^{\infty}\frac{\bar{\lambda}^m\lambda^n}{m!n!}J_{m,n}(x,\innp{x,\,\bar{x}}).
 \end{align*}
 
 On the other hand, $y(\lambda)$ is the unique solution to the linear stochastic differential equation
 \begin{align*}
 \dif y=y (\bar{\lambda} \dif x+\lambda \dif \bar{x}),\qquad y_0=1.
 \end{align*}
 By Picard's iteration, one has that 
 \begin{align*}
 y_t(\lambda)=\sum_{m=0}^{\infty}\sum_{n=0}^{\infty} \bar{\lambda}^m\lambda^n \sum  \int_0^t\int _0^{t_{m+n}}\cdots\int_0^{t_2}\,\dif C_{t_1}\dif C_{t_2}\cdots \dif C_{t_{m+n}},
 \end{align*}  where $0<t_1<t_2<\cdots <t_{m+n}<t$, $C_t=x_t$ or $C_t=\bar{x}_t$, and the sum is over all $n$-combinations of $\set{1,2,\dots,m+n}$ such that $C_t=\bar{x}_t$ .
Comparing coefficients yields that 
\begin{align*}
J_{m,n}(x_t,\innp{x,\,\bar{x}}_t)&=m!n! \sum  \int_0^t\int _0^{t_{m+n}}\cdots\int_0^{t_2}\,\dif C_{t_1}\dif C_{t_2}\cdots \dif C_{t_{m+n}}.
\end{align*}
Especially,  when $x$ is a complex Brownian motion, the right hand side of the above equality is equal to the complex multiple Wiener-It\^o integral: \cite{cl}
\begin{align*}
\int_0^t\int _0^{t}\cdots\int_0^{t}\,\dif x_{t_1}\cdots \dif x_{t_m}\dif \bar{x}_{t_{m+1}}\cdots \dif \bar{x}_{t_{m+n}}.
\end{align*}
\end{ex}


\subsection{Complex Wiener-It\^o chaos decomposition and Stroock's formula}

Now one can give the theory of complex Wiener-It\^{o} chaos decomposition \cite{cl2, ito}.
\begin{thm}\label{chaos}
\begin{itemize}
    \item[\textup{(i)}]The linear space generated by the class
    \begin{equation}\label{jmn2}
       \set{J_{m,n}(Z({h})):\, m,n\ge 0,  {h}\in \mathfrak{H} ,\norm{ {h}}_{\mathfrak{H}}=\sqrt2}
    \end{equation}
   is dense in $L^q(\Omega,\sigma(Z),P)$ for every $q\in [1,\infty)$.
    \item[\textup{(ii)}] One has that $ L^2(\Omega,\sigma(Z),P)=\bigoplus^{\infty}_{m=0}\bigoplus^{\infty}_{n=0}\mathscr{H}_{m,n}.$
This means that every random variable $F\in  L^2(\Omega,\sigma(Z),P)$ admits a unique expansion of the type $F=\sum_{m=0}^\infty\sum_{n=0}^\infty F_{m,n}$, where $F_{m,n}\in \mathscr{H}_{m,n}, F_{0,0}=E[F]$ and the series converges in $ L^2(\Omega)$.
 \end{itemize}
\end{thm}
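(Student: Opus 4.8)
The plan is to establish (i) by approximating arbitrary elements of $L^q(\Omega,\sigma(Z),P)$ with exponential (Wick) random variables built from the first chaos, and then to deduce (ii) from (i) together with the mutual orthogonality of the spaces $\mathscr H_{m,n}$. The delicate point is the passage from complex Hermite polynomials to these exponentials, which rests on uniform $L^q$-bounds for complex Hermite polynomials evaluated at a complex Gaussian; the density step that follows is classical once $\set{Z(h)}$ is regarded as a real Gaussian family of doubled dimension. Concretely, for $h\in\FH$ and $\lambda\in\Cnum$ set
$$
\mathcal E(h,\lambda):=\exp\set{\bar\lambda Z(h)+\lambda\overline{Z(h)}-\norm h_\FH^2\abs\lambda^2}=\exp\set{2\RE(\bar\lambda Z(h))-\norm h_\FH^2\abs\lambda^2},
$$
a nonnegative random variable lying in every $L^q(\Omega)$, $q\in[1,\infty)$. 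The explicit formula \eqref{jal} together with the elementary bound $\norm{\xi^k}_{L^q}\le c_q^{\,k}\sqrt{k!}$ for a standard complex Gaussian $\xi$ yields $\norm{J_{m,n}(Z(h),\norm h_\FH^2)}_{L^q}\le C^{m+n}\sqrt{m!\,n!}$ with $C=C(q,\norm h_\FH)$, so the generating identity \eqref{gene} with $\rho=\norm h_\FH^2$ gives
$$
\mathcal E(h,\lambda)=\sum_{m,n\ge 0}\frac{\bar\lambda^m\lambda^n}{m!\,n!}\,J_{m,n}\!\bigl(Z(h),\norm h_\FH^2\bigr),
$$
the series converging in $L^q(\Omega)$. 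Since \eqref{jal} also gives the scaling relation $J_{m,n}(az,\rho)=a^{m+n}J_{m,n}(z,\rho/a^2)$ for $a>0$, each summand is a scalar multiple of $J_{m,n}(Z(h'))$ with $h'=\sqrt2\,h/\norm h_\FH$ and $\norm{h'}_\FH=\sqrt2$; hence every $\mathcal E(h,\lambda)$ lies in the $L^q$-closure of the linear span of the family \eqref{jmn2}.

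It therefore suffices to prove that $\mathrm{span}\set{\mathcal E(h,\lambda):h\in\FH,\ \lambda\in\Cnum}$ is dense in $L^q(\Omega,\sigma(Z),P)$. Fix a complete orthonormal system $\set{e_k}$ of $\FH$, so $\sigma(Z)$ is generated by $\set{\RE Z(e_k),\IM Z(e_k):k\ge 1}$. Let $G$ lie in the dual $L^{q'}(\Omega,\sigma(Z),P)$ and satisfy $E[G\,\mathcal E(h,\lambda)]=0$ for all $h,\lambda$; I claim $G=0$. Restricting $h$ to the span of $e_1,\dots,e_N$ and cancelling the deterministic factor, this reads $E[G\exp(t_1V_1+\cdots+t_{2N}V_{2N})]=0$ for all $t\in\Rnum^{2N}$, where $(V_1,\dots,V_{2N})=(\RE Z(e_1),\IM Z(e_1),\dots,\RE Z(e_N),\IM Z(e_N))$. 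By Hölder's inequality the map $\zeta\mapsto E[G\exp(\zeta_1V_1+\cdots+\zeta_{2N}V_{2N})]$ is well-defined and entire on $\Cnum^{2N}$ and vanishes on $\Rnum^{2N}$, hence identically; taking $\zeta=\mathrm i t$ shows the finite complex measure $G\,\dif P$ has vanishing Fourier transform under $(V_1,\dots,V_{2N})$, so that $E[G\mid V_1,\dots,V_{2N}]=0$. Letting $N\to\infty$, the generated $\sigma$-algebras increase to $\sigma(Z)$, so the martingale convergence theorem forces $G=0$. By the Hahn--Banach theorem the closed span is all of $L^q(\Omega,\sigma(Z),P)$, which proves (i).

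By (i) with $q=2$ the algebraic sum $\sum_{m,n\ge 0}\mathscr H_{m,n}$ is dense in $L^2(\Omega,\sigma(Z),P)$, so for (ii) it remains to show $\mathscr H_{m,n}\perp\mathscr H_{m',n'}$ whenever $(m,n)\ne(m',n')$; granting this, the $\mathscr H_{m,n}$ are mutually orthogonal closed subspaces with dense sum, whence $L^2(\Omega,\sigma(Z),P)=\bigoplus_{m,n}\mathscr H_{m,n}$, the $F_{m,n}$ are the orthogonal projections of $F$ (so the expansion is unique), and $F_{0,0}=E[F]$ because $\mathscr H_{0,0}=\Cnum$. By sesquilinearity and continuity of $\langle\cdot,\cdot\rangle_{L^2}$ it is enough to verify the orthogonality on generators. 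For $\norm h_\FH=\norm g_\FH=\sqrt2$ the variable $\mathcal E(g,\mu)$ is real, so a direct Gaussian computation using $E[Z(\cdot)^2]=0$ and $E[Z(\cdot)\overline{Z(\cdot)}]=\langle\cdot,\cdot\rangle_\FH$ gives
$$
E\!\bigl[\mathcal E(h,\lambda)\,\overline{\mathcal E(g,\mu)}\bigr]=\exp\set{\bar\lambda\mu\,\langle h,g\rangle_\FH+\lambda\bar\mu\,\overline{\langle h,g\rangle_\FH}}.
$$
Expanding both sides in powers of $\lambda,\bar\lambda,\mu,\bar\mu$ (legitimate by the norm estimate above), matching coefficients and using $\overline{J_{m',n'}(z)}=J_{n',m'}(z)$ from \eqref{jal} yields
$$
E\!\bigl[J_{m,n}(Z(h))\,\overline{J_{m',n'}(Z(g))}\bigr]=\delta_{mm'}\,\delta_{nn'}\,m!\,n!\,\langle h,g\rangle_\FH^{\,m}\,\overline{\langle h,g\rangle_\FH}^{\,n},
$$
which vanishes for $(m,n)\ne(m',n')$. (This orthogonality across distinct chaoses is also part of the classical theory, cf.\ Theorems~13.1--13.2 of \cite{ito} and Proposition~\ref{ppp1}.) The one genuine obstacle is, as indicated, the uniform $L^q$-control of complex Hermite polynomials that makes the exponential approximation in (i) go through; the rest is Hilbert-space geometry and a Gaussian moment computation.
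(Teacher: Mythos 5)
Your proposal is correct, and it is a self-contained implementation of the first of the two routes that the paper names but does not write out: the ``analysis approach'' via Hahn--Banach duality, exponential functionals of the first chaos, and the Gaussian Fourier-transform/martingale-convergence argument, which the paper delegates to \cite{ito, janson, np, Nualart}. The paper itself develops instead the second, ``algebraic'' route: Theorem~\ref{chaos} is deduced from the known real Wiener--It\^o decomposition through the real--complex correspondence of Theorem~\ref{pp2} and Proposition~\ref{prop 3.5}, i.e.\ the identity $\mathcal{H}_n(W)+\mi \mathcal{H}_n(W)=\bigoplus_{k+l=n}\mathscr{H}_{k,l}$ in (\ref{newchaos}) (see Remark~\ref{newchaosre} and \cite{cl2}). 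What each buys: your argument is independent of the real theory, works directly in every $L^q$, $q\in[1,\infty)$, and produces as a by-product the orthogonality relation $E[J_{m,n}(Z(h))\overline{J_{m',n'}(Z(g))}]=\delta_{mm'}\delta_{nn'}\,m!\,n!\,\langle h,g\rangle_\FH^{\,m}\,\overline{\langle h,g\rangle_\FH}^{\,n}$, which is exactly what reduces (ii) to Hilbert-space geometry; the paper's route imports the hard density statement from the real case with almost no analysis and, in exchange, yields the finer structural identification (\ref{newchaos}) of the complex chaoses inside the real ones, which is then reused for the fourth moment theorems. Two small points you should tighten: the coefficient matching in the orthogonality computation needs a word of justification since $\lambda$ and $\bar{\lambda}$ are not independent variables (write $\lambda=re^{\mi\theta}$, $\mu=se^{\mi\phi}$ and use uniqueness of Fourier coefficients in $\theta,\phi$ together with the $L^2$-convergence of the double series), and the rescaling to $\norm{h'}_\FH=\sqrt2$ should set aside the trivial case $h=0$, which is covered by $J_{0,0}=1$.
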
 There are  two essentially different approaches to show this theorem. The analysis approach makes use of Hahn-Banach theory and the Fourier transform or  Laplace transform along the same line of the proof of the real Wiener-It\^{o} chaos decomposition, please refer to 
\cite{ito, janson, np, Nualart} for details. The other approach makes use of the known real Wiener-It\^{o} chaos decomposition and the connection
between real Wiener-It\^o chaos and complex Wiener-It\^o chaos, which is an indirect proof but can be seen as an algebraic proof, please refer to  \cite{cl2} or Remark (\ref{newchaosre}).

The next goal of this section is Stroock's formula for the complex Wiener-It\^o chaos expansion. The starting point is to define complex Malliavin derivatives. 
\begin{defn}\it 
Let $\mathcal{S}$ denote the set of all random variables of the form
\begin{equation}\label{definition}
  f\big(Z(\varphi_1),\cdots,Z(\varphi_m)\big),
\end{equation}
where $f\in C_{\uparrow}^\infty(\Cnum^m)$ ($C^\infty $ function such that $f$and all its partial derivatives have polynomial growth order), $\varphi_i\in\FH, \ i=1,2,\cdots,m$ and $m\ge 1$.  A random variable belonging to $\mathcal{S}$ is said to be {\it smooth}.
If $F\in \mathcal{S}$, then the complex Malliavin derivatives of $F$ are the elements of $L^2(\Omega, \mathfrak{H} )$ defined by \cite{chw 17,cl2}:
 \begin{align}
    D  F     & =\sum_{i=1}^m \partial_{i } f(Z(\varphi_1),\dots,Z(\varphi_m))\varphi_{i},\\
    \bar{D}  F     & =\sum_{i=1}^m \bar{\partial}_{i } f(Z(\varphi_1),\dots,Z(\varphi_m))\bar{\varphi}_{i},
  \end{align} where
\begin{equation*}
   \partial_j f= \frac{\partial}{\partial z_j}f(z_1,\dots,z_m) ,\quad \bar{\partial}_j f=\frac{\partial}{\partial \bar{z}_j}f(z_1,\dots,z_m),\quad j=1,\dots,m
\end{equation*} are the Wirtinger derivatives.
\end{defn}

One can define the iteration of the operator  $ D$ and $\bar{D}$ in such a way that $D^p\bar{D}^q  F$ is a random variable with values in $\FH^{\odot p}\otimes \FH^{\odot q}$ for any $F\in \mathcal{S}$.
It is routine to show that $D^p\bar{D}^q $ are closable from $L^r(\Omega)$ to $L^r(\Omega,\FH^{\odot p}\otimes \FH^{\odot q})$ for every $r\ge 1$. Denote by $\DR^{p,r}\bigcap \bar{\DR}^{q,r}$ the closure of $\mathcal{S}$ with respect to the Sobolev seminorm $\norm{\cdot}_{p,q,r}$.  

\begin{thm}[Stroock's formula]
Every random variable $F\in  L^2(\Omega,\sigma(Z),P)$ can be expressed by $$F=\sum_{p=0}^\infty\sum_{q=0}^\infty I_{p,q}(f_{p,q}),$$ 
where $f_{p,q}\in \FH^{\odot {p}}\otimes \FH^{\odot q}$ . If $F\in \DR^{m,2}\bigcap \bar{\DR}^{n,2}$ then $$f_{p,q}=\frac{1}{p!q!}\E[D^p\bar{D}^q F],\qquad \forall p\le m,\,q\le n.$$
\end{thm}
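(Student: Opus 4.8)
The plan is to follow the real-variable proof of Stroock's formula (cf.\ \cite{str 87} and \cite[\S1.2]{Nualart}), substituting for the Hermite recursion its complex counterpart. The first assertion is immediate from the chaos decomposition: by Theorem \ref{chaos}(ii) we may write $F=\sum_{p,q\ge0}F_{p,q}$ with $F_{p,q}\in\mathscr{H}_{p,q}$ and the series convergent in $L^2(\Omega)$; since by \eqref{mulint} the map $I_{p,q}$ is an isometry of $\FH^{\odot p}\otimes\FH^{\odot q}$ (endowed with the norm $\sqrt{p!q!}\,\norm{\cdot}_{\FH^{\otimes(p+q)}}$) onto $\mathscr{H}_{p,q}$, each $F_{p,q}$ equals $I_{p,q}(f_{p,q})$ for a unique $f_{p,q}\in\FH^{\odot p}\otimes\FH^{\odot q}$, and $f_{0,0}=F_{0,0}=\E[F]$.

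The core of the second assertion is the pair of lowering identities
\begin{equation}\label{lowering}
  D\,I_{m,n}(f)=m\,I_{m-1,n}(f),\qquad \bar{D}\,I_{m,n}(f)=n\,I_{m,n-1}(f),
\end{equation}
for $f\in\FH^{\odot m}\otimes\FH^{\odot n}$, where on the right one views $f$ as an element of $\FH\otimes(\FH^{\odot(m-1)}\otimes\FH^{\odot n})$, respectively $\FH\otimes(\FH^{\odot m}\otimes\FH^{\odot(n-1)})$, and integrates the remaining variables, so that the right-hand sides are $\FH$-valued random variables. Since $I_{m,n}$ is an isometry and $D,\bar{D}$ are bounded from $\mathscr{H}_{m,n}$ (with the $L^2(\Omega)$ norm) into $L^2(\Omega;\FH)$, both sides of \eqref{lowering} depend continuously and linearly on $f$; as the elementary tensors $h^{\otimes m}\otimes\bar{h}^{\otimes n}$, $h\in\FH$, span a dense subspace of $\FH^{\odot m}\otimes\FH^{\odot n}$ (complex polarization), it suffices to verify \eqref{lowering} for $f=h^{\otimes m}\otimes\bar{h}^{\otimes n}$. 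For such $f$ one has $I_{m,n}(f)=J_{m,n}(Z(h),\norm{h}_{\FH}^2)$ — a homogeneity argument starting from \eqref{mulint}, or directly \cite{ito} — and applying the Wirtinger derivatives $\partial_z,\partial_{\bar z}$ to the generating identity \eqref{gene} yields the recursions $\partial_z J_{m,n}=m J_{m-1,n}$ and $\partial_{\bar z}J_{m,n}=n J_{m,n-1}$ (see the Appendix); the complex chain rule for $D$ and $\bar{D}$ applied to the polynomial functional $J_{m,n}(Z(h),\norm{h}_{\FH}^2)$ then gives exactly \eqref{lowering}.

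Iterating \eqref{lowering} we obtain, for $p\le m$ and $q\le n$,
\begin{equation*}
  D^{p}\bar{D}^{q}I_{m,n}(f)=\frac{m!\,n!}{(m-p)!\,(n-q)!}\,I_{m-p,n-q}(f),
\end{equation*}
and $D^{p}\bar{D}^{q}I_{m,n}(f)=0$ whenever $p>m$ or $q>n$. Now take $F\in\DR^{m,2}\cap\bar{\DR}^{n,2}$ and fix $p\le m$, $q\le n$, so that also $F\in\DR^{p,2}\cap\bar{\DR}^{q,2}$. With the expansion $F=\sum_{a,b}I_{a,b}(f_{a,b})$, the partial sums approximate $F$ in the graph norm of the closed operator $D^{p}\bar{D}^{q}$ (equivalently, one invokes the description of $\DR^{p,2}\cap\bar{\DR}^{q,2}$ through the decay of $\{f_{a,b}\}$), so $D^{p}\bar{D}^{q}$ may be applied term by term. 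Taking expectations and using that $I_{a-p,b-q}(f_{a,b})$ is centered unless $(a,b)=(p,q)$ — in which case $D^{p}\bar{D}^{q}I_{p,q}(f_{p,q})=p!\,q!\,f_{p,q}$ is deterministic — all terms vanish except $(a,b)=(p,q)$, giving $\E[D^{p}\bar{D}^{q}F]=p!\,q!\,f_{p,q}$.

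I expect the main obstacle to be the rigorous justification of \eqref{lowering}: one must confirm that the generating series \eqref{gene}, evaluated at $z=Z(h)$, can be differentiated term by term in $L^2(\Omega)$ and in $L^2(\Omega;\FH)$, and that the complex (Wirtinger) chain rule is valid for the Malliavin operators $D,\bar{D}$ acting on polynomial functionals of $Z(h)$; this is where the calculus of $J_{m,n}$ recorded in the Appendix enters. A secondary, more routine, point is the density/closedness argument licensing the term-by-term differentiation of a general chaos expansion, which rests on the characterization of $\DR^{p,2}\cap\bar{\DR}^{q,2}$ in terms of the kernels.
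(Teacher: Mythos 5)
Your proposal is correct and follows essentially the same route as the paper: the first claim from the chaos decomposition plus the isometry defining $I_{p,q}$, and the second from the lowering identities $D\,I_{m,n}(f)=mI_{m-1,n}(f)$, $\bar{D}\,I_{m,n}(f)=nI_{m,n-1}(f)$ (proved, as in the paper's Proposition on derivatives of $I_{m,n}$, by polarization to $f=h^{\otimes m}\otimes\bar h^{\otimes n}$ and the recursions $\partial_z J_{m,n}=mJ_{m-1,n}$, $\partial_{\bar z}J_{m,n}=nJ_{m,n-1}$), followed by term-by-term differentiation of the expansion and taking expectations. The extra care you flag about justifying term-by-term application of the closed operators is a point the paper itself passes over quickly, so there is no substantive divergence.
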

\begin{proof}
The first part of the statement is a direct combination of Theorem~\ref{chaos} and the definition of  multiple Wiener-It\^{o} integrals.  For the second part, one implies from Proposition~\ref{prop d imn}  below that 
\begin{align*}
D^p\bar{D}^q  F=\sum_{i=p}^\infty\sum_{j=q}^\infty \frac{i!j! }{(i-p)!(j-q)!} I_{i-p,j-q}(f_{p,q}).
\end{align*}Taking expectation on both sides of the equality, one has that $f_{p,q}=\frac{1}{p!q!}\E[D^p\bar{D}^q F], \forall p\le m,\,q\le n $.
\end{proof}
\begin{prop}\label{prop d imn}
Suppose $ f \in \FH^{\odot m}\otimes \FH^{\odot n}$. Then one has that
\begin{align}
   D_{t}(I_{m,n}(f))&=m I_{m-1,n}(f(t,\cdot,\cdot)),\,\quad
   \bar{D}_{s}(I_{m,n}(f))=nI_{m,n-1}(f(\cdot,s,\cdot)).\label{d i mn}
   \end{align} %
\end{prop}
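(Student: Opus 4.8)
The plan is to reduce both identities to a dense family of kernels, verify them there via the Wirtinger chain rule together with the lowering recurrences for the polynomials $J_{m,n}$, and then pass to an arbitrary $f$ by linearity and the closedness of $D,\bar D$. By the obvious symmetry between the two formulas it suffices to prove $D_t\big(I_{m,n}(f)\big)=m\,I_{m-1,n}\big(f(t,\cdot,\cdot)\big)$. For the reduction, apply the defining isometry \eqref{mulint}--\eqref{fourier} to $e_1=h/\|h\|_{\FH}$ and use the homogeneity $J_{m,n}(az,|a|^2\rho)=a^m\bar a^{\,n}J_{m,n}(z,\rho)$, which is immediate from \eqref{jal}; this yields the clean identification
\[
I_{m,n}\big(h^{\otimes m}\otimes\bar h^{\otimes n}\big)=J_{m,n}\big(Z(h),\|h\|_{\FH}^2\big),\qquad h\in\FH .
\]
I would take $\{h^{\otimes m}\otimes\bar h^{\otimes n}:h\in\FH\}$ as the generating family: by a polarization performed separately in the $m$ holomorphic and the $n$ antiholomorphic slots (put $h=\sum_j t_j e_j$, $t_j\in\Cnum$, and recover a prescribed multi-homogeneous component by integrating against a suitable character of a finite torus in the $t_j$'s), finite linear combinations of such tensors are dense in $\FH^{\odot m}\otimes\FH^{\odot n}$. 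One could instead run the argument directly on the complete orthonormal system \eqref{fourier} of Proposition~\ref{ppp1}, which sidesteps polarization at the price of tracking the constants $\sqrt{\mathbf m!\mathbf n!}$.

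On the generating family the identity is a one-line computation. The variable $F=J_{m,n}\big(Z(h),\|h\|_{\FH}^2\big)=\phi(Z(h))$ with $\phi(z)=J_{m,n}(z,\|h\|_{\FH}^2)$, a polynomial in $z$ and $\bar z$, so $F\in\mathcal S$ and, by the definition of $D$, $D_tF=(\partial_z\phi)(Z(h))\,h(t)$. Differentiating \eqref{gene} in $z$ (resp.\ in $\bar z$) gives $\partial_z J_{m,n}(z,\rho)=m\,J_{m-1,n}(z,\rho)$ and $\partial_{\bar z}J_{m,n}(z,\rho)=n\,J_{m,n-1}(z,\rho)$; hence
\[
D_tF=m\,h(t)\,J_{m-1,n}\big(Z(h),\|h\|_{\FH}^2\big)=m\,h(t)\,I_{m-1,n}\big(h^{\otimes(m-1)}\otimes\bar h^{\otimes n}\big)=m\,I_{m-1,n}\big((h^{\otimes m}\otimes\bar h^{\otimes n})(t,\cdot,\cdot)\big),
\]
which is the claim for $f=h^{\otimes m}\otimes\bar h^{\otimes n}$; the $\bar D_s$ formula follows identically from $\partial_{\bar z}J_{m,n}=n\,J_{m,n-1}$.

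Both sides being linear in $f$, the previous step gives the identities on the dense subspace above. For general $f$ choose $f_k\to f$ there; then $I_{m,n}(f_k)\to I_{m,n}(f)$ in $L^2(\Omega)$ by \eqref{mulint}, while $m\,I_{m-1,n}(f_k(t,\cdot,\cdot))\to m\,I_{m-1,n}(f(t,\cdot,\cdot))$ in $L^2(\Omega,\FH)$, because freezing the first coordinate is a bounded map $\FH^{\odot m}\otimes\FH^{\odot n}\to\FH\otimes\big(\FH^{\odot(m-1)}\otimes\FH^{\odot n}\big)$ and $\Id_{\FH}\otimes I_{m-1,n}$ is an isometry onto $L^2(\Omega,\FH)$. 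Since $D$ and $\bar D$ are closed on $\DR^{1,2}$ and $\bar\DR^{1,2}$, this shows $I_{m,n}(f)\in\DR^{1,2}\cap\bar\DR^{1,2}$ and gives both formulas. (For abstract $\FH$ the symbol $f(t,\cdot,\cdot)$ means the image of $f$ under the canonical inclusion $\FH^{\odot m}\otimes\FH^{\odot n}\hookrightarrow\FH\otimes\big(\FH^{\odot(m-1)}\otimes\FH^{\odot n}\big)$.)

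The conceptual core --- the chain rule plus the recurrence for $J_{m,n}$ --- is light; the step that needs genuine care is the last one: fixing the normalizations on the symmetric tensor spaces so that $I_{m,n}$ and the partial-contraction map are truly continuous, and (if one prefers the exponential-vector route to the reduction) checking convergence in the graph norm of $D$ so that the chaos series may be differentiated termwise. In the Fourier--Hermite variant the analogous bookkeeping is the exact matching of $\sqrt{\mathbf m!\mathbf n!}$ with the symmetrization coefficients that appear when a single coordinate of $\hat{\otimes}_k e_k^{\otimes m_k}$ is frozen.
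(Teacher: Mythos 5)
Your proof is correct and follows essentially the same route as the paper: reduce by polarization to kernels $f=h^{\otimes m}\otimes\bar h^{\otimes n}$, use the identification $I_{m,n}(h^{\otimes m}\otimes\bar h^{\otimes n})=J_{m,n}(Z(h),\norm{h}_{\FH}^2)$ together with $\partial_z J_{m,n}=m\,J_{m-1,n}$ and $\partial_{\bar z}J_{m,n}=n\,J_{m,n-1}$, and then extend by linearity and density. Your explicit closure/continuity argument at the end simply fills in what the paper leaves implicit under the phrase ``polarization technique.''
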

\begin{proof}
By the polarization technique, one can assume that $f=h^{\otimes m}\otimes \bar{h}^{\otimes n}$ with $h\in \FH$. Denote $\rho=\norm{h}^2$ and $\vec{t}^{\,k}=(t_1,\dots,t_k)$, $\vec{s}^{\,k}=(s_1,\dots,s_k)$. Then one obtains that
\begin{align*}
   D_{t}(I_{m,n}(f))&=D_{t}(J_{m,n}(Z(h),\rho))\\
   &=m J_{m-1,n}(Z(h),\rho)h(t)\\
   &=m I_{m-1,n}(h^{\otimes m-1}\otimes \bar{h}^{\otimes n})h(t)\\
   &=m I_{m-1,n}\big(f(t,\,\vec{t}^{m-1},\,\vec{s}^{\,n})\big).
\end{align*}  Similarly, one has that $\bar{D}_{s}(I_{m,n}(f))=nI_{m,n-1}\big(f(\vec{t}^{\,m},\,s,\,\vec{s}^{\,n-1})\big)$.
\end{proof}

Moreover,  three well known propositions concerning Malliavin derivatives are listed without proof to use later.
\begin{prop}[integration by parts formula]\label{prop 2_3}
   Suppose that $F\in\mathcal{S}$ and $h\in \FH$, then one has the following integration by parts formula
   \begin{align*}
        E[Z(h)\times \bar{F}]=E[\innp{h,\, D F}],\qquad  E[ \bar{Z} (h)\times \bar{F}]=E[\innp{h,\,\bar{D} F}]) .
   \end{align*}
\end{prop}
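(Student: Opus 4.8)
The plan is to establish the integration by parts formula by reducing to the smooth case and exploiting the structure of the complex Gaussian family together with the classical (real) Gaussian integration by parts. First I would take $F\in\mathcal S$ of the form $F=f\big(Z(\varphi_1),\dots,Z(\varphi_m)\big)$ with $f\in C_\uparrow^\infty(\Cnum^m)$, and without loss of generality assume that $h$ lies in the (complex) linear span of $\varphi_1,\dots,\varphi_m$; otherwise one enlarges the list of generators by adding $h$, which does not change $F$ and only augments the number of arguments of $f$ by one (with $f$ constant in the new variable). Writing $Z(\varphi_j)=U_j+\im V_j$ in terms of real and imaginary parts, the family $\{U_j,V_j\}$ is a (real) centered Gaussian vector whose covariance is read off from $E[Z(g)\overline{Z(h)}]=\innp{g,h}_\FH$ and $E[Z(g)Z(h)]=0$; in particular $\RE\innp{\varphi_i,\varphi_j}_\FH$ and $\IM\innp{\varphi_i,\varphi_j}_\FH$ encode all the second moments. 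So everything becomes a finite-dimensional statement about the real Gaussian measure $\dif\nu$ on $\Rnum^{2m}$.

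Next I would recall the real Gaussian integration by parts identity: for a centered Gaussian vector $(G_1,\dots,G_N)$ and a smooth $\Phi$ of polynomial growth, $E[G_k\,\Phi]=\sum_\ell E[G_kG_\ell]\,E[\partial_\ell\Phi]$. Applying this with $G_k$ ranging over the $U_j,V_j$ and combining the $U$-equation with $\im$ times the $V$-equation, one assembles $E[Z(h)\,\overline F]$ (note $Z(h)=\sum_j c_j Z(\varphi_j)$ for scalars $c_j$, since $h=\sum_j c_j\varphi_j$) into a sum over $i,j$ of $E[Z(\varphi_i)\overline{Z(\varphi_j)}]$ — equivalently $\innp{\varphi_i,\varphi_j}_\FH$ — times the Wirtinger derivative $\partial_j$ of $\overline f$; the cross terms involving $E[Z(\varphi_i)Z(\varphi_j)]=0$ and the conjugate Wirtinger pieces drop out precisely because of the vanishing of $E[Z(\cdot)^2]$. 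This is exactly the coordinate expression of $E\big[\innp{h,\,DF}\big]$, using $DF=\sum_i\partial_i f\,(Z(\varphi_\bullet))\,\varphi_i$ and the conjugate-linearity of $\innp{\cdot,\cdot}_\FH$ in the first slot. The identity for $\bar Z(h)$ follows the same way (or by conjugating and relabeling), producing $E\big[\innp{h,\,\bar DF}\big]$.

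I expect the main obstacle to be bookkeeping rather than anything deep: one must be careful with the convention for the Hermitian inner product (which slot is conjugate-linear), with the Wirtinger derivatives $\partial_j=\tfrac12(\partial_{x_j}-\im\partial_{y_j})$ and $\bar\partial_j=\tfrac12(\partial_{x_j}+\im\partial_{y_j})$, and with the fact that differentiating $\overline F=\overline{f(\dots)}$ in the real variables mixes $\partial_j$ and $\bar\partial_j$ of $f$. The cancellation of the "anti-holomorphic'' terms is the one genuine point to verify, and it is forced by the defining relation $E[Z(g)Z(h)]=0$ — this is the structural reason the complex formula has this clean shape. Finally, the passage from $\mathcal S$ to general $F\in\DR^{1,2}\cap\bar\DR^{1,2}$ (if needed) is routine: both sides are continuous in the relevant Sobolev seminorm, by Cauchy–Schwarz for the right-hand side and by the $L^2$-boundedness of multiplication by $Z(h)$ on each fixed chaos (or by a direct $L^2$ estimate) for the left-hand side, so one closes by density.
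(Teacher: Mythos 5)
The paper itself gives no proof to compare against: Proposition~\ref{prop 2_3} is one of the ``three well known propositions\ldots listed without proof'', so your argument supplies what the paper leaves implicit. Your route --- reduce to $F=f\big(Z(\varphi_1),\dots,Z(\varphi_m)\big)$ with $h$ in the span of the $\varphi_j$, pass to the real Gaussian vector of real and imaginary parts, apply $E[G_k\Phi]=\sum_\ell E[G_kG_\ell]\,E[\partial_\ell\Phi]$, and let the vanishing pseudo-covariance $E[Z(\varphi_i)Z(\varphi_j)]=0$ kill the unwanted terms --- is the standard proof and it does work: it amounts to the complex Gaussian identity $E[Z(\varphi_i)\,G]=\sum_j\innp{\varphi_i,\varphi_j}_\FH\,E[\bar\partial_j G]$, which with $G=\bar f$ gives $E[Z(h)\bar F]=\sum_j\innp{h,\varphi_j}_\FH E\big[\overline{\partial_jf}\big]=E[\innp{h,DF}]$. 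However, the two bookkeeping points you flagged are resolved incorrectly. First, since you use $Z(h)=\sum_j c_jZ(\varphi_j)$ (complex linearity of $Z$) together with $E[Z(g)\overline{Z(h)}]=\innp{g,h}_\FH$, the inner product must be linear in its \emph{first} argument and conjugate-linear in its second; your claim of ``conjugate-linearity in the first slot'' is inconsistent with these two facts, and with that convention the first identity would not come out. Second, the $\bar D$ identity does not follow by bare ``conjugating and relabeling'': since $\bar DF=\sum_j\bar\partial_jf\,\bar\varphi_j$, the same computation yields
\begin{align*}
E\big[\overline{Z(h)}\,\bar F\big]\;=\;\sum_j\overline{\innp{h,\varphi_j}_\FH}\;E\big[\overline{\bar\partial_jf}\big]\;=\;E\big[\innp{\bar h,\,\bar DF}\big],
\end{align*}
so one obtains the displayed second formula only after replacing $h$ by $\bar h$ (equivalently, reading $\bar Z(h)$ as $\overline{Z(\bar h)}$, or restricting to $h\in\FH_\R$). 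A quick test shows the naive version fails: for $F=\overline{Z(g)}$ one gets $E[\overline{Z(h)}\bar F]=\innp{g,h}_\FH$ while $E[\innp{h,\bar DF}]=\innp{h,\bar g}_\FH$, and these differ for generic complex $g,h$ (e.g.\ $h=g$ with $\innp{g,\bar g}_\FH=0$). With these conjugation conventions straightened out your argument is complete; the closing density step is unnecessary, since the proposition is stated only for $F\in\mathcal{S}$.
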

\begin{prop}\label{jprop}
   Let $\set{F_n,n\ge 1}$ be a sequence of random variable in $\DR^{1,2}$ (resp. $\bar{\DR}^{1,2}$) that converges to $F$ in $L^2(\Omega)$ and that
$$\sup\limits_nE[\norm{DF_n}_{\mathfrak{H}}^2]<\infty, \quad (\text{resp.  } \sup\limits_nE[\norm{\bar{D}F_n}_{\mathfrak{H}}^2]<\infty),$$
then $F $ belongs to $\DR^{1,2}$ (resp. $\bar{\DR}^{1,2}$) and the sequence of derivatives $DF_n$ (resp. $\bar{D}F_n$) converges weakly to $DF$ (resp. $\bar{D}F$) in $L^2(\Omega,\mathfrak{H})$.
\end{prop}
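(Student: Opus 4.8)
The plan is to mimic the classical argument for the analogous real-variable statement (e.g.\ Lemma 1.2.3 in Nualart's book), which transfers to the complex setting with only cosmetic changes since $D$ and $\bar D$ are closable operators that behave like the real gradient on the space $\mathcal S$ of smooth functionals. I treat only the case of $D$ and $\DR^{1,2}$; the case of $\bar D$ is identical after replacing Wirtinger $\partial$-derivatives by $\bar\partial$-derivatives. First I would record that, by the bound $\sup_n E[\norm{DF_n}_{\FH}^2]=:M<\infty$, the sequence $\{DF_n\}$ is bounded in the Hilbert space $L^2(\Omega,\FH)$, hence (Banach--Alaoglu / reflexivity) there is a subsequence $\{DF_{n_k}\}$ converging weakly in $L^2(\Omega,\FH)$ to some limit $G$. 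By Mazur's lemma, a sequence of convex combinations of the $DF_{n_k}$ converges \emph{strongly} to $G$ in $L^2(\Omega,\FH)$; writing $G_j:=\sum_{i} \alpha^{(j)}_i DF_{n_{k_i}}$ for the corresponding convex combinations $\widetilde F_j:=\sum_i \alpha^{(j)}_i F_{n_{k_i}}$, linearity of $D$ gives $D\widetilde F_j = G_j \to G$ in $L^2(\Omega,\FH)$, while $\widetilde F_j\to F$ in $L^2(\Omega)$ since each $F_{n_k}\to F$.

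Next I would invoke closability of $D$ (stated in the paragraph preceding the Stroock formula: $D^p\bar D^q$ is closable from $L^r(\Omega)$ to $L^r(\Omega,\FH^{\odot p}\otimes\FH^{\odot q})$, in particular $D$ is closable from $L^2$ to $L^2(\Omega,\FH)$). Since $\widetilde F_j\to F$ in $L^2(\Omega)$ and $D\widetilde F_j\to G$ in $L^2(\Omega,\FH)$, closability forces $F\in\DR^{1,2}$ with $DF=G$. In particular the weak limit $G$ is intrinsic: it does not depend on the subsequence chosen. This last remark promotes the conclusion from "a subsequence of $DF_n$ converges weakly to $DF$" to "the full sequence $DF_n$ converges weakly to $DF$": any subsequence of $\{DF_n\}$ is again bounded in $L^2(\Omega,\FH)$, so it has a further subsequence converging weakly, and by the argument above the only possible weak limit is $DF$; a standard subsequence-of-subsequence argument then yields weak convergence of the entire sequence.

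The only genuinely substantive point is the closability of $D$, but this is already granted in the excerpt, so nothing needs to be reproved. The one place to be slightly careful is the complex-linear bookkeeping: $\mathcal S$ consists of functionals $f(Z(\varphi_1),\dots,Z(\varphi_m))$ with $f\in C^\infty_\uparrow(\Cnum^m)$, and $D$ is defined via Wirtinger derivatives, so "linearity of $D$" must be read as $\Cnum$-linearity in $F$ — which is exactly what the definition of $D F=\sum_i \partial_i f(\cdots)\varphi_i$ provides, since convex (indeed arbitrary $\Cnum$-linear) combinations of smooth functionals are smooth and $D$ acts linearly on them. Thus the passage from $\{F_n\}\subset\DR^{1,2}$ to convex combinations, and from $D$ on $\mathcal S$ to $D$ on $\DR^{1,2}$, is routine. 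I expect the proof to be short; the main (and only) conceptual ingredient is the weak-compactness-plus-Mazur-plus-closability triad, and the rest is the subsequence argument to upgrade to convergence of the full sequence.
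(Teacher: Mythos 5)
Your proof is correct, and since the paper states this proposition without proof (as a well-known fact), your argument is exactly the standard one behind the real-case analogue (Nualart's Lemma 1.2.3) that the paper implicitly relies on: boundedness gives a weakly convergent subsequence, Mazur plus closability of $D$ identifies the limit as $DF$ with $F\in\DR^{1,2}$, and the subsequence-of-subsequence step upgrades to weak convergence of the whole sequence. The only point to state carefully is that the Mazur convex combinations are drawn from tails of the subsequence, so the corresponding convex combinations of the $F_{n_k}$ still converge to $F$ in $L^2(\Omega)$ — which is precisely what the usual formulation of Mazur's lemma provides.
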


\begin{prop}{\bf (Chain rule)}\label{prop312}
 If $\varphi:\Cnum^m\to \Cnum$ is a continuously differentiable function with bounded partial derivatives and if $F=(F^1,\dots,F^m)$ is a random vector whose components are elements of $\DR^{1,2}\bigcap \bar{\DR}^{1,2}$, then $\varphi(F)\in \DR^{1,2}\bigcap \bar{\DR}^{1,2}$ and
\begin{align}
   D\varphi(F)&=\sum_{j=1}^m \partial_j \varphi(F)DF^j+\bar{\partial}_j \varphi(F)D\overline{ F^j},\label{chr1}\\
   \bar{D}\varphi(F)&=\sum_{j=1}^m \partial_j \varphi(F)\bar{D}F^j+\bar{\partial}_j \varphi(F)\bar{D}\overline{ F^j}.\label{chr2}
\end{align}
\end{prop}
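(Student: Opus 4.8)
The plan is to transcribe the classical real–variable argument (see, e.g., \cite{Nualart}) in two stages, using the Wirtinger calculus on $\Cnum^m\cong\Rnum^{2m}$ in place of the ordinary gradient. Two elementary structural facts make the bookkeeping go through and I would record them first: (a) $\mathcal S$ is stable under conjugation, since if $F=f(Z(h_1),\dots,Z(h_k))\in\mathcal S$ then $\bar F=\bar f(Z(h_1),\dots,Z(h_k))\in\mathcal S$, where $\bar f$ is the pointwise conjugate of $f$, still in $C^\infty_\uparrow$; and (b) on $\mathcal S$ one has $D\bar F=\overline{\bar D F}$ and $\bar D\bar F=\overline{DF}$, immediate from the identities $\partial_i\bar f=\overline{\bar\partial_i f}$ and $\bar\partial_i\bar f=\overline{\partial_i f}$. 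By closability these extend to the seminorm closures, so $F^j\in\DR^{1,2}\cap\bar\DR^{1,2}$ entails $\overline{F^j}\in\DR^{1,2}\cap\bar\DR^{1,2}$ with $D\overline{F^j}=\overline{\bar D F^j}$ and $\bar D\overline{F^j}=\overline{D F^j}$; this is precisely why the hypothesis demands membership in \emph{both} closures.

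First I would treat the case $F^1,\dots,F^m\in\mathcal S$. Writing $F^j=g_j(Z(h_1),\dots,Z(h_k))$ with a common tuple $(h_1,\dots,h_k)$ and $g_j\in C^\infty_\uparrow(\Cnum^k)$, I mollify $\varphi$ on $\Rnum^{2m}$ to get $\varphi_\e=\varphi*\psi_\e\in C^\infty$; since $\varphi$ has bounded derivatives, $\varphi_\e$ grows at most linearly and (for fixed $\e$) has bounded derivatives of every order, so $\varphi_\e\circ(g_1,\dots,g_m)\in C^\infty_\uparrow(\Cnum^k)$ and $\varphi_\e(F)\in\mathcal S$. Combining the definition of $D$ with the multivariate Wirtinger chain rule for $\Cnum^k\xrightarrow{(g_1,\dots,g_m)}\Cnum^m\xrightarrow{\varphi_\e}\Cnum$ yields $D\varphi_\e(F)=\sum_j(\partial_j\varphi_\e)(F)DF^j+(\bar\partial_j\varphi_\e)(F)D\overline{F^j}$ and likewise for $\bar D$. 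Letting $\e\downarrow0$: $\varphi$ being continuous, $\varphi_\e\to\varphi$, $\partial_j\varphi_\e\to\partial_j\varphi$, $\bar\partial_j\varphi_\e\to\bar\partial_j\varphi$ locally uniformly, while $\|\nabla\varphi_\e\|_\infty\le\|\nabla\varphi\|_\infty$; hence $\varphi_\e(F)\to\varphi(F)$ in $L^2(\Omega)$ and, by dominated convergence using these uniform bounds together with $DF^j,D\overline{F^j}\in L^2(\Omega,\FH)$, the right-hand sides converge in $L^2(\Omega,\FH)$ to $\sum_j(\partial_j\varphi)(F)DF^j+(\bar\partial_j\varphi)(F)D\overline{F^j}$ and its $\bar D$-analogue. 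Closability of $D$ and $\bar D$ then gives $\varphi(F)\in\DR^{1,2}\cap\bar\DR^{1,2}$ together with \eqref{chr1}--\eqref{chr2}.

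Next I would pass to general $F^j\in\DR^{1,2}\cap\bar\DR^{1,2}$: choose $F^j_n\in\mathcal S$ with $F^j_n\to F^j$ in $L^2(\Omega)$ and $DF^j_n\to DF^j$, $\bar D F^j_n\to\bar D F^j$ in $L^2(\Omega,\FH)$, whence also $D\overline{F^j_n}=\overline{\bar D F^j_n}\to D\overline{F^j}$ by fact (b). As $\varphi$ has bounded derivatives it is globally Lipschitz, so with $F_n=(F^1_n,\dots,F^m_n)$ one gets $\varphi(F_n)\to\varphi(F)$ in $L^2(\Omega)$. By the first stage $\varphi(F_n)\in\DR^{1,2}\cap\bar\DR^{1,2}$ with $D\varphi(F_n)=\sum_j(\partial_j\varphi)(F_n)DF^j_n+(\bar\partial_j\varphi)(F_n)D\overline{F^j_n}$. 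Passing to a subsequence with $F_n\to F$ a.s.\ and using continuity and boundedness of $\partial_j\varphi$, $\bar\partial_j\varphi$, each summand converges in $L^2(\Omega,\FH)$ to the corresponding summand of \eqref{chr1}: split each difference into a uniformly bounded factor times an $L^2$-convergent difference, plus a term that tends to $0$ a.s.\ while staying dominated by an integrable function. Thus $D\varphi(F_n)$ converges in $L^2(\Omega,\FH)$, and closability of $D$ (alternatively Proposition~\ref{jprop}) gives $\varphi(F)\in\DR^{1,2}$ and \eqref{chr1}; the same argument for $\bar D$ gives $\varphi(F)\in\bar\DR^{1,2}$ and \eqref{chr2}.

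The routine ingredients are the multivariate Wirtinger chain rule and the dominated-convergence estimates. The one point requiring genuine care — and the reason the statement assumes $F^j$ lies in both $\DR^{1,2}$ and $\bar\DR^{1,2}$ rather than just one of them — is keeping the conjugated derivatives $D\overline{F^j}$ and $\bar D\overline{F^j}$ under control throughout the mollification and the approximation of $F$; once facts (a)--(b) are in hand, the remainder is a faithful copy of the real-variable proof.
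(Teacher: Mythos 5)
The paper itself lists this chain rule among the ``well known propositions concerning Malliavin derivatives\ldots listed without proof,'' so there is no in-paper argument to compare against. Your proof is correct and is the standard argument one would expect (mollification of $\varphi$, the Wirtinger chain rule on smooth cylindrical functionals, then closability of $D$ and $\bar{D}$, followed by approximation of general $F^j$ and dominated convergence); in particular the conjugation identities $D\overline{F}=\overline{\bar{D}F}$ and $\bar{D}\overline{F}=\overline{DF}$ that you isolate are exactly the complex-specific ingredient needed to control the terms $D\overline{F^j}$, $\bar{D}\overline{F^j}$ and explain why membership in both $\DR^{1,2}$ and $\bar{\DR}^{1,2}$ is assumed.
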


\subsection{Product formula and Wick product}
In the spirit of Segal \cite{segal},  since the  Wiener-It\^o multiple integrals are isometric to the algebra of symmetric tensor, the product formula is crucial to the theory of complex multiple Wiener-It\^{o} integrals and implies some important properties.

\begin{thm}{\bf (Product formula)}\label{pdt fml}
For two symmetric functions $f\in \FH^{\odot a}\otimes \FH^{\odot b},\,g\in \FH^{\odot c}\otimes \FH^{\odot d}$,
the product formula for complex multiple Wiener-It\^{o} integrals is given by \cite{ch 17}
   \begin{align}\label{ii}
     I_{a,b}(f)I_{c,d}(g)&=\sum_{i=0}^{a\wedge d}\sum^{b\wedge c}_{j=0}\,{a\choose i}{d\choose i}{b\choose j}{c\choose j}i!j!\,I_{a+c-i-j,b+d-i-j}(f\otimes_{i,j}g).
   \end{align}
   where $a,b,c,d\in \Nnum$, and the contraction of $(i,j)$ indices of the two functions is given by
   \begin{align*}
   &  f\otimes_{i,j} g \\
    & =  \sum_{l_1,\dots,l_{i+j}=1}^{\infty} \innp{f,\,e_{l_1}\otimes \dots\otimes e_{l_i}\otimes \bar{e}_{l_{i+1}} \otimes \dots \otimes \bar{e}_{l_{i+j}}}\otimes \innp{g,\,{e}_{l_{i+1}}\otimes \dots\otimes {e}_{l_{i+j}}\otimes \bar{e}_{l_1}\otimes \dots\otimes \bar{e}_{l_{i}}}, 
     \end{align*}
by convention, $f\otimes_{0,0} g=f\otimes g$ denotes the tensor product of $f$ and $g$.
Especially, when $\FH=L^2(T,\mathcal{B},\nu)$ with $\nu$ non-atomic, one has that
\begin{align*}
& f\otimes_{i,j} g (t_1,\dots,t_{p_1+p_2-i-j
    };s_1,\dots,s_{q_1+q_2-i-j})\\ &=\int_{A^{i+j}}\nu^{i+j}(\dif u_1\cdots \dif u_i\dif v_1\cdots \dif v_j)\,f(t_1,\dots,t_{p_1-i},u_1,\dots,u_i; s_1\dots,s_{q_1-j},v_1\dots,v_j)\\
    & \times g(t_{p_1-i+1},\dots,t_{p_1-i+p_2-j},v_1\dots,v_j; s_{q_1-j+1}\dots,s_{q_1-j+q_2-i},u_1\dots,u_i). 
\end{align*}
\end{thm}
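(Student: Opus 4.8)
The plan is to reduce the product formula to a computation on the Fourier--Hermite basis, where everything becomes a statement about complex Hermite polynomials. By the polarization (and bilinearity) argument already used in the proof of Proposition~\ref{prop d imn}, it suffices to treat rank-one kernels. More precisely, I would first take $f=h^{\otimes a}\otimes\bar h^{\otimes b}$ and $g=k^{\otimes c}\otimes\bar k^{\otimes d}$ for $h,k\in\FH$, so that by \eqref{mulint} one has $I_{a,b}(f)=\bigl(\text{normalizing constant}\bigr)J_{a,b}(Z(h),\|h\|^2)$ and similarly for $g$. Then the product $I_{a,b}(f)I_{c,d}(g)$ is a product of two complex Hermite polynomials evaluated at the jointly complex-Gaussian pair $(Z(h),Z(k))$, and the right-hand side must be re-expanded into the orthogonal chaoses. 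To keep the bookkeeping manageable I would actually reduce further to the case $\FH$ two-dimensional (or even to $\FH=L^2(T,\mathcal B,\nu)$ with $\nu$ non-atomic, using the classical iterated-integral representation), since the contraction $f\otimes_{i,j}g$ only sees the Gram data $\|h\|^2,\|k\|^2,\langle h,k\rangle,\langle k,h\rangle$.

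The key computational input is the linearization (product) formula for complex Hermite polynomials, which should be recorded in the Appendix: a formula expressing $J_{a,b}(z,\rho)J_{c,d}(w,\sigma)$, for jointly Gaussian $z,w$, as a double sum over $i\le a\wedge d$ and $j\le b\wedge c$ of $J_{a+c-i-j,\,b+d-i-j}$ with coefficients $\binom{a}{i}\binom{d}{i}\binom{b}{j}\binom{c}{j}i!\,j!$ times the appropriate products of covariances. I would derive this linearization formula directly from the generating function \eqref{gene}: writing $\exp\{\lambda\bar z+\bar\lambda z-\rho|\lambda|^2\}\exp\{\mu\bar w+\bar\mu w-\sigma|\mu|^2\}$, taking $E[\,\cdot\,]$ against the Gaussian law of $(z,w)$ to identify cross terms, and matching Taylor coefficients in $\lambda,\bar\lambda,\mu,\bar\mu$. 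The cross-exponential $\exp\{\bar\lambda\mu\,E[z\bar w]+\bar\mu\lambda\,E[w\bar z]\}$ expanded as a double series is exactly what produces the two contraction indices $i$ (pairing holomorphic slots of $f$ with anti-holomorphic slots of $g$) and $j$ (the reverse pairing), together with the combinatorial weights.

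Once the Hermite linearization identity is in hand, the remaining work is purely organizational: (1) translate the covariances $E[Z(h)\overline{Z(k)}]=\langle h,k\rangle$, $E[Z(k)\overline{Z(h)}]=\langle k,h\rangle$ into the contraction $h^{\otimes a}\otimes\bar h^{\otimes b}\;\otimes_{i,j}\;k^{\otimes c}\otimes\bar k^{\otimes d}$, checking that this contraction equals $\langle h,k\rangle^{i}\langle k,h\rangle^{j}\,h^{\otimes(a-i)}\otimes k^{\otimes(c-j)}\otimes\bar h^{\otimes(b-j)}\otimes\bar k^{\otimes(d-i)}$ up to symmetrization; (2) reconcile the normalizing constants from \eqref{mulint} on both sides, i.e.\ verify that $\sqrt{a!b!}\sqrt{c!d!}$ against $\sqrt{(a+c-i-j)!(b+d-i-j)!}$ combined with the Hermite coefficients yields precisely $\binom{a}{i}\binom{d}{i}\binom{b}{j}\binom{c}{j}i!\,j!$; and (3) extend from rank-one kernels to general symmetric $f,g$ by polarization and linearity, noting that both sides of \eqref{ii} are (jointly) bilinear and continuous in $(f,g)$ and that rank-one tensors span a dense subspace. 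The main obstacle is step (2) together with getting the two separate contraction indices bookkept correctly: one must be careful that the holomorphic part of $f$ can only contract against the anti-holomorphic part of $g$ and vice versa (reflecting $E[Z(h)Z(k)]=0$), which is what forces the ranges $i\le a\wedge d$, $j\le b\wedge c$ and distinguishes the complex formula from its real counterpart; all the combinatorial factors must be tracked through the symmetrizations in \eqref{ito-sense}. I would double-check the final constants on the smallest nontrivial case, e.g.\ $I_{1,0}(h)I_{0,1}(\bar k)=I_{1,1}(h\otimes\bar k)+\langle h,k\rangle$, before asserting the general identity.
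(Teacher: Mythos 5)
The paper itself does not prove Theorem~\ref{pdt fml}; it is quoted from \cite{ch 17}. So your proposal has to stand on its own, and in outline it can: reducing to kernels $f=h^{\otimes a}\otimes\bar h^{\otimes b}$, $g=k^{\otimes c}\otimes\bar k^{\otimes d}$ is legitimate (by Proposition~\ref{ppp1} and the isometry (\ref{mulint}) these span a dense subspace, and the paper uses the same reduction in Proposition~\ref{prop d imn}), your identification $f\otimes_{i,j}g=\innp{h,k}^i\innp{k,h}^j\,h^{\otimes(a-i)}\otimes k^{\otimes(c-j)}\otimes\bar h^{\otimes(b-j)}\otimes\bar k^{\otimes(d-i)}$ is correct, and the combinatorial constants do come out right. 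Two steps, however, need repair. First, your ``key computational input'' is misstated: for $h\neq k$ the product $J_{a,b}(Z(h),\|h\|^2)J_{c,d}(Z(k),\|k\|^2)$ is \emph{not} a linear combination of complex Hermite polynomials of a single variable with covariance scalars as coefficients; the terms are genuinely mixed-kernel integrals $I_{a+c-i-j,b+d-i-j}(f\otimes_{i,j}g)$. Second, ``taking $E[\cdot]$ against the Gaussian law of $(z,w)$'' kills all chaoses of positive order and only yields the covariance/orthogonality identity (the single fully-contracted term), not the linearization. The generating-function argument must instead be run pathwise: with $w=\bar\lambda h+\bar\mu k$ one has
\begin{align*}
e^{\lambda \overline{Z(h)}+\bar\lambda Z(h)-\|h\|^2|\lambda|^2}\,e^{\mu \overline{Z(k)}+\bar\mu Z(k)-\|k\|^2|\mu|^2}
=e^{\overline{Z(w)}+Z(w)-\|w\|^2}\;e^{\bar\lambda\mu\innp{h,k}+\bar\mu\lambda\innp{k,h}},
\end{align*}
then expand the first factor via (\ref{gene}) as $\sum_{M,N}\frac{1}{M!N!}I_{M,N}(w^{\otimes M}\otimes\bar w^{\otimes N})$, expand $w^{\otimes M}\otimes\bar w^{\otimes N}$ multinomially using the linearity of $I_{M,N}$ and the symmetrization convention (\ref{ito-sense}), and match the coefficient of $\bar\lambda^a\lambda^b\bar\mu^c\mu^d$. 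Doing this reproduces exactly ${a\choose i}{d\choose i}{b\choose j}{c\choose j}i!j!$ and the ranges $i\le a\wedge d$, $j\le b\wedge c$, so your step (2) closes once reformulated this way.

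Finally, the extension in your step (3) from rank-one kernels to general $f,g$ requires more than bilinearity: you need $L^2$-continuity of the left-hand side under $L^2$ convergence of $I_{a,b}(f_n)$, i.e.\ the equivalence of $L^2$ and $L^4$ norms on a fixed chaos. This is available from the hypercontractivity of the complex OU semigroup (Proposition~\ref{pp201}), which is established independently of the product formula (via Mehler's formula, or via Theorem~\ref{pp2} and the real case), so there is no circularity, but it should be invoked explicitly rather than asserted. With these corrections your argument is a sound, self-contained proof, essentially the generating-function route, as opposed to the paper's treatment, which simply cites \cite{ch 17}.
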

Hoshino et al. obtained a version of product formula for complex Wiener-It\^o integrals where $f,\, g$ can be non-symmetrized functions (see \cite[Theorem A.1]{hin 17}).

The product formula can be used to imply a  \"{U}st\"{u}nel-Zakai independence criterion for complex Wiener-It\^o integrals whose proof can be found in \cite{ch 17}.
Suppose $  f(t_1,\dots,t_p,s_1,\dots,s_q)\in \FH^{\odot p}\otimes \FH^{\odot q}$. We call $$ \FH^{\odot q}\otimes \FH^{\odot p}\ni h(t_1,\dots,t_q,s_1,\dots,s_p):=\bar{f}(s_1,\dots,s_p,t_1,\dots,t_q)$$   the reversed complex conjugate of function $  f(t_1,\dots,t_p,s_1,\dots,s_q)$.
Clearly,  one has that 
\begin{align}\label{re-cjgt}
 \overline{{I}_{p,q}(f)}={I}_{q,p}(h).
\end{align}

\begin{thm}\label{thm2}
   For two symmetric functions $f\in \FH^{\odot a}\otimes \FH^{\odot b},\,g\in \FH^{\odot c}\otimes \FH^{\odot d}$ with $a+b\ge 1 ,\,c+d\ge 1$, the following conditions are equivalent:
     \begin{itemize}
    \item[\textup{(i)}] $I_{a,b}(f)$ and $I_{c,d}(g)$ are independent random variables;
    \item[\textup{(ii)}] $f\otimes_{1,0}g=0$, $f\otimes_{0,1}g=0$, $f\otimes_{1,0}h=0$ and $f\otimes_{0,1}h=0$ in $\FH^{\otimes(a+b+c+d-2) }$, where $h$ is the reversed complex conjugate of $g$.
  \end{itemize}
\end{thm}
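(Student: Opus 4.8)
The plan is to adapt the classical Üstünel--Zakai independence criterion to the complex setting by reducing it to statements about moments and then exploiting the product formula (Theorem~\ref{pdt fml}) together with the reversed-conjugate identity \eqref{re-cjgt}. First I would establish the implication (ii)$\Rightarrow$(i). The natural strategy is to show that if all four first-order contractions vanish, then for every pair of polynomials $P,Q$ the random variables $P(I_{a,b}(f),\overline{I_{a,b}(f)})$ and $Q(I_{c,d}(g),\overline{I_{c,d}(g)})$ are uncorrelated; since products of multiple integrals of $f,h$ (resp.\ $g,k$, where $k$ is the reversed conjugate of $g$) span a dense subalgebra of $L^2(\sigma(I_{a,b}(f)))$ (resp.\ $L^2(\sigma(I_{c,d}(g)))$) by Theorem~\ref{chaos}, this yields independence. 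Concretely, I would expand any such product via \eqref{ii} into a sum of multiple integrals whose kernels are built by iterated contractions; the key algebraic observation is that if $f\otimes_{1,0}g=f\otimes_{0,1}g=f\otimes_{1,0}h=f\otimes_{0,1}h=0$, then \emph{every} contraction $\otimes_{i,j}$ with $i+j\ge 1$ between (a contraction-iterate of) $f,h$ and (a contraction-iterate of) $g,k$ also vanishes, because a single contracted index can be ``routed through'' one of the four basic contractions. Hence the cross terms collapse and $\E[\Phi(I_{a,b}(f))\overline{\Psi(I_{c,d}(g))}]$ factors.

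Next, for (i)$\Rightarrow$(ii), I would use independence to write $\E[I_{a,b}(f)\overline{I_{c,d}(g)}\,\cdot\, \text{(correction terms)}]=\E[\cdots]\E[\cdots]$ and extract the contractions from low-order moments. The cleanest route is to compute the covariance-type quantity $\E\big[\,|I_{a,b}(f)|^2\,|I_{c,d}(g)|^2\,\big]-\E|I_{a,b}(f)|^2\,\E|I_{c,d}(g)|^2$, which independence forces to be zero, and to expand it with the product formula: the resulting expression is a sum of squared norms of contractions $\|f\otimes_{i,j}\bar g\|^2$ and $\|f\otimes_{i,j}g\|^2$ (after using \eqref{re-cjgt} to turn conjugates of integrals into integrals of reversed conjugates) with strictly positive combinatorial coefficients. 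Vanishing of the whole sum forces each term to vanish; in particular the $(1,0)$ and $(0,1)$ contractions against both $g$ and $h$ must vanish. One must check that the contractions appearing really do include all four listed in (ii) and that no cancellation between positive terms is possible — this is where the precise bookkeeping of which $\otimes_{i,j}$ pairings survive in the fourth-moment expansion matters.

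The main obstacle I anticipate is the combinatorial/algebraic lemma underlying (ii)$\Rightarrow$(i): showing that the vanishing of just the four first-order contractions propagates to the vanishing of \emph{all} higher mixed contractions that can arise when multiplying arbitrarily many copies of $I_{a,b}(f),\overline{I_{a,b}(f)}$ against arbitrarily many copies of $I_{c,d}(g),\overline{I_{c,d}(g)}$. In the real case this is handled by an inductive ``one index at a time'' argument (each contraction can be decomposed so that at least one contracted pair links an $f$-index to a $g$-index directly), but in the complex b\ bivariate setting one has to track the four sign patterns (holomorphic/antiholomorphic indices on each side) and confirm that each of the four basic contractions $f\otimes_{1,0}g$, $f\otimes_{0,1}g$, $f\otimes_{1,0}h$, $f\otimes_{0,1}h$ is exactly what is needed to kill the corresponding pattern. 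Once that lemma is in place the rest is bookkeeping with Theorem~\ref{pdt fml}, \eqref{re-cjgt}, and density from Theorem~\ref{chaos}. Since the excerpt explicitly says the proof ``can be found in \cite{ch 17}'', I would present the above as a sketch and refer to that reference for the full combinatorial details.
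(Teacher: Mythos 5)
First, note that the paper itself does not prove Theorem~\ref{thm2}: it only records the statement and refers to \cite{ch 17} for the proof, so there is no internal argument to compare against line by line. Judged on its own merits, your sketch of the necessity direction (i)$\Rightarrow$(ii) is in the right spirit: independence gives $\E[|FG|^2]=\E[|F|^2]\,\E[|G|^2]$ (and likewise with $\bar G$), and expanding $FG$ and $F\bar G$ by Theorem~\ref{pdt fml} plus \eqref{re-cjgt} lets one extract the vanishing of the four first-order contractions. Even here, though, the ``no cancellation'' step is not free: distinct pairs $(i,j)$ with the same $i+j$ land in the same chaos, so the isometry produces norms of \emph{sums} of symmetrized contractions, and one needs the identity expanding $\|f\tilde{\otimes}g\|^2$ (and $\|f\tilde{\otimes}h\|^2$) into a strictly positive combination of unsymmetrized contraction norms before each term can be forced to vanish; this is exactly the bookkeeping you defer, and it is the substance of that direction.

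The genuine gap is in your (ii)$\Rightarrow$(i). Two steps are asserted but not established, and the second would fail as stated. (a) The ``propagation'' claim, that vanishing of the four first-order contractions kills every mixed contraction arising from arbitrary products of $I_{a,b}(f),\overline{I_{a,b}(f)},I_{c,d}(g),\overline{I_{c,d}(g)}$, is the heart of the matter and is not a routine induction on one index at a time; the way this is actually proved (already in the real \"Ust\"unel--Zakai argument) is by showing that the contraction conditions force the kernels to be carried by orthogonal closed subspaces of $\FH$ (together with their conjugates), which is a structural statement about $f$ and $g$, not a formal consequence of the product formula. (b) Even granting (a), your passage from ``all mixed polynomial moments factor'' to independence invokes density of polynomials in $F,\bar F$ inside $L^2(\sigma(F))$ ``by Theorem~\ref{chaos}''; but Theorem~\ref{chaos} asserts density of the Hermite-polynomial class in $L^q(\Omega,\sigma(Z),P)$, not density of polynomials of a \emph{fixed} chaotic random variable in $L^2$ of its own $\sigma$-field. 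For $a+b\ge 2$ the hypercontractivity bound \eqref{hypercontract} only gives $\E\exp\{c|F|^{2/(a+b)}\}<\infty$, so $|F|$ need not have exponential moments, polynomial density/moment determinacy is not guaranteed, and a moment-factorization argument alone cannot conclude independence. The standard (and presumably \cite{ch 17}'s) route avoids this entirely: once the kernels are supported on orthogonal subspaces, $I_{a,b}(f)$ and $I_{c,d}(g)$ are measurable with respect to the $\sigma$-fields generated by two independent complex Gaussian subfamilies of $Z$, and independence follows with no appeal to moments. As written, your sufficiency argument therefore has a hole that cannot be patched by the ingredients you cite.
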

The product formula implies that a product of two complex multiple integrals is a finite sum of multiple integrals. This is close related to the Wick product which can be used to define the renormalization of the stochastic complex Ginzburg-Landau equation \cite{hin 17}. The Wick product is the last topic of this section.
\begin{definition}\it 
For $(m,n)\ge 0$, $\pi_{m,n}$ denotes the orthogonal projection of $L^2(\Omega)$ onto $\mathscr{H}_{m,n}$, and  $\pi_{\le(m,n)}$ the orthogonal projection of $L^2(\Omega)$ onto $\bigoplus^{m}_{i=0}\bigoplus^{n}_{j=0}\mathscr{H}_{i,j}$.
For any $h_1,\dots, h_{m+n}\in \FH$, the Wick product $:Z(h_1)\dots Z(h_m)\overline{Z({h_{m+1}})}\dots\overline{Z({h_{m+n}})}:$ is given by 
\begin{align*}
:Z(h_1)\dots Z(h_m)\overline{Z({h_{m+1}})}\dots\overline{Z({h_{m+n}})}:\,=\pi_{m,n} \big(Z(h_1)\dots Z(h_m)\overline{Z({h_{m+1}})}\dots\overline{Z({h_{m+n}})} \big).
\end{align*}
Define the general Wick product by 
\begin{align*}
\xi \diamond \eta=\pi_{m+p,n+q}(\xi \eta),
\end{align*}if $\xi\in \mathscr{H}_{m,n}$ and $\eta\in \mathscr{H}_{p,q}$, and extend $\diamond$ by bilinearity to a binary operator on the finite order chaos space  $\overline{\mathcal{P}}_{*}(\FH)=\sum_{m,n=0}^{\infty}   \mathscr{H}_{m,n}$.
\end{definition}
The product formula and the hypercontractivity inequality (see below subsection \ref{hyperctc}) imply that the general Wick product is a continuous bilinear operator $\mathscr{H}_{m,n}\times \mathscr{H}_{p,q}\to \mathscr{H}_{m+p,n+q} $. 
Thus there exists a constant $c(m,n,p,q)$ such that for any $\xi\in \bigoplus^{m}_{i=0}\bigoplus^{n}_{j=0} \mathscr{H}_{i,j}$ and $\eta\in \bigoplus^{p}_{i=0}\bigoplus^{q}_{j=0}\mathscr{H}_{i,j}$,
\begin{align*}
\norm{\xi \diamond \eta}_2\le c(m,n,p,q)\norm{\xi}_2\norm{\eta}_2.
\end{align*}
The constant $c(m,n,p,q)$ can not be replaced by a constant independent of $m,\,n,\,p$ and $q$. 

\begin{ex}
\it By (\ref{ii})(\ref{re-cjgt}) and induction,  for $f\in \FH$, $\big(I_{1,0}(f)\big)^n=I_{n,0}(f^{\odot n})$. Therefore, 
\begin{align}\label{wick-ex}
 :Z(f)^n: \ \ = I_{n,0}(f^{\odot n})=\big(Z(f)\big)^n.
\end{align}
\begin{align}\label{wtime}
 :Z(f)^p\overline{Z(f)}^q:\ \  =\sum_{k=0}^{p\wedge q}(-1)^k k! {p\choose k}{q\choose k}Z(f)^{p-k}\overline{Z(f)}^{q-k}(E|Z(f)|^2)^k.
\end{align}
Especially, let $\zeta$ be a symmetric complex gaussian variable, i.e. $\zeta=\xi+\mi \eta$ where $\xi$ and $\eta$ are independent centered Gaussian variable with the same variance, and take $\FH=\mathbb{C}$, then for $f=f_1+\mi f_2\in \mathbb{C}$, $Z(f)=(f_1\xi-f_2\eta)+\mi(f_1\eta+f_2\xi)$ is also a symmetric complex Gaussian variable 
and (\ref{wick-ex}) (\ref{wtime}) coincides with the wick powers of a symmetric complex Gaussian variable in Example 3.30 and Example 3.31 respectively in \cite{janson} which are induced by Feynman diagram.  

In fact, along the same ways in section III.2 in \cite{janson}, one can define the wick exponential $:e^{Z(f)}:$ by $e^{Z(f)}$. 
\end{ex}


\subsection{The Hu-Meyer formula}
The Hu-Meyer formula relates the multiple Stratonovich integral to the multiple Wiener-It\^{o} integral and vice versa (see \cite{hm 88}\cite{hm 93}\cite{hyz 17}). 

The product formula (\ref{ii}) can also be used to imply the complex version of Hu-Meyer formula. 
 Take a complete orthonormal system $\set{ {e}_k}$ in $\mathfrak{H} $.  Let $f\in \FH^{\odot p }\otimes \FH^{\odot q }$ and consider the following random variable:
\begin{align}\label{sn}
S^n_{p,q}(f)=\sum_{i_1,\dots,i_p=1}^{n}\sum_{l_1,\dots,l_q=1}^{n}\,\innp{f,\,(e_{i_1}\hat{\otimes}\dots\hat{\otimes}e_{i_p})\otimes (\bar{e}_{l_1}\hat{\otimes}\dots\hat{\otimes}\bar{e}_{l_q})} Z(e_{i_1})\dots Z(e_{i_p})\overline{Z({e_{l_1}})}\dots\overline{Z({e_{l_q}})}.
\end{align}
If the limit in probability of $S^n_{p,q}(f)$ exists as $n\to \infty$, one calls  $f$ is Stratonovich integrable. The limit is called  the multiple Stratonovich integral of $f$
and is denoted by $S_{p,q}(f) $.
\begin{definition}\it 
Suppose that $0\le k\le  p \wedge q$. Denote that 
\begin{align*}
\mathrm{Tr}^{k,n} f
&=\sum_{i_1,\dots,i_{p+q-k}=1}^{n} \innp{f,\,(e_{i_1}\hat{\otimes}\dots \hat{\otimes} e_{i_k} \hat{\otimes} \dots \hat{\otimes} e_{i_{p}})\otimes(\bar{e}_{i_1}\hat{\otimes}\dots \hat{\otimes} \bar{e}_{i_k}\hat{\otimes}\bar{e}_{i_{p+1}}\hat{\otimes}\dots\hat{\otimes} \bar{e}_{i_{p+q-k}})}\\
&\qquad \times (e_{i_{k+1}}\hat{\otimes} \dots \hat{\otimes} e_{i_{p}})\otimes (\bar{e}_{i_{p+1}}\hat{\otimes}\dots\hat{\otimes} \bar{e}_{i_{p+q-k}}). 
\end{align*}
If $\mathrm{Tr}^{k,n} f$ converges in $\FH^{\odot (p-k) }\otimes \FH^{\odot (q-k) }$ as $n\to \infty$, then one says that $f$ has a trace of order $k$ and the limit  is denoted by $\mathrm{Tr}^{k} f$.
\end{definition}
\begin{thm}[The Hu-Meyer formula]
Suppose $f\in \FH^{\odot p }\otimes \FH^{\odot q }$. If the traces of order $k$ of $f$ exist for all $k\le (p \wedge q) $, then $f$ is Stratonovich integrable and 
\begin{align}
S_{p,q}(f)&=\sum_{k=0}^{p\wedge q}  k! {p \choose k}{q \choose k} I_{p-k,\,q-k}(\mathrm{Tr}^k f); \label{humey 1}\\
I_{p,q}(f)&=\sum_{k=0}^{p\wedge q} (-1)^k k!{p \choose k}{q \choose k}S_{p-k,\,q-k}(\mathrm{Tr}^k f ).\label{humey 2}
\end{align}
\end{thm}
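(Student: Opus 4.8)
The plan is to first establish, for every fixed $n$ and every $f\in\FH^{\odot p}\otimes\FH^{\odot q}$, the exact (limit‑free) identity
$$
S^n_{p,q}(f)=\sum_{k=0}^{p\wedge q}k!\,{p\choose k}{q\choose k}\,I_{p-k,q-k}\big(\mathrm{Tr}^{k,n}f\big),
$$
and then to let $n\to\infty$. To prove this identity, observe that both sides are linear in $f$ and, writing $\FH_n=\mathrm{span}\{e_1,\dots,e_n\}$ and $P_n$ for the orthogonal projection onto $\FH_n^{\otimes(p+q)}$, that both sides depend on $f$ only through $P_nf$: this is trivial for $S^n_{p,q}(f)$, and for $\mathrm{Tr}^{k,n}f$ it holds because every test tensor occurring in its definition lies in $\FH_n^{\otimes(p+q)}$, so $\langle f,\cdot\rangle=\langle P_nf,\cdot\rangle$ there. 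Since the finite‑dimensional space $\FH_n^{\odot p}\otimes\FH_n^{\odot q}$ is linearly spanned by the diagonal tensors $\{h^{\otimes p}\otimes\bar h^{\otimes q}:h\in\FH_n\}$ (complex polarization), it is enough to check the identity for $f=h^{\otimes p}\otimes\bar h^{\otimes q}$ with $h\in\FH_n$. For such $f$ a direct computation gives $S^n_{p,q}(f)=Z(h)^p\overline{Z(h)}^q$, $\mathrm{Tr}^{k,n}f=\rho^{k}\,h^{\otimes(p-k)}\otimes\bar h^{\otimes(q-k)}$ with $\rho=\|h\|^2$, and $I_{a,b}(h^{\otimes a}\otimes\bar h^{\otimes b})=J_{a,b}(Z(h),\rho)$ (as in the proof of Proposition~\ref{prop d imn}), so the identity collapses to the scalar expansion $z^{m}\bar z^{n}=\sum_{r=0}^{m\wedge n}r!\,{m\choose r}{n\choose r}\rho^{r}\,J_{m-r,n-r}(z,\rho)$, which is the inversion of~(\ref{jal}). (Alternatively, iterating the product formula~(\ref{ii}) on $Z(e_{i_1})\cdots Z(e_{i_p})\overline{Z(e_{l_1})}\cdots\overline{Z(e_{l_q})}$ gives the same level‑$n$ identity directly.)

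Granting the level‑$n$ identity, the formula~(\ref{humey 1}) is immediate. By hypothesis $\mathrm{Tr}^{k,n}f\to\mathrm{Tr}^{k}f$ in $\FH^{\odot(p-k)}\otimes\FH^{\odot(q-k)}$ for each $k\le p\wedge q$, and each $I_{p-k,q-k}$ is continuous from $\FH^{\odot(p-k)}\otimes\FH^{\odot(q-k)}$ into $L^2(\Omega)$ by the isometry~(\ref{mulint}); hence the right‑hand side of the level‑$n$ identity converges in $L^2(\Omega)$, and a fortiori in probability, to $\sum_{k=0}^{p\wedge q}k!\binom pk\binom qk I_{p-k,q-k}(\mathrm{Tr}^{k}f)$. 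Thus $f$ is Stratonovich integrable and~(\ref{humey 1}) holds.

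For the inverse formula~(\ref{humey 2}) I would invert the resulting triangular system. First record the purely algebraic tower property $\mathrm{Tr}^{i,n}\circ\mathrm{Tr}^{j,n}=\mathrm{Tr}^{i+j,n}$ (again by linearity and a check on diagonal tensors); this, together with the trace hypothesis on $f$, shows that $\mathrm{Tr}^{j}f$ again has traces of all orders $\le(p-j)\wedge(q-j)$ (with $\mathrm{Tr}^{i}\mathrm{Tr}^{j}f=\mathrm{Tr}^{i+j}f$), hence is itself Stratonovich integrable by the first part with $S_{p-j,q-j}(\mathrm{Tr}^{j}f)=\sum_{i}i!\binom{p-j}{i}\binom{q-j}{i}I_{p-j-i,q-j-i}(\mathrm{Tr}^{i+j}f)$. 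Applying the level‑$n$ identity to $\mathrm{Tr}^{j,n}f$, substituting into $\sum_{j}(-1)^{j}j!\binom pj\binom qj\,S^n_{p-j,q-j}(\mathrm{Tr}^{j,n}f)$, and collecting the coefficient of $I_{p-m,q-m}(\mathrm{Tr}^{m,n}f)$ over all $j$ with $i+j=m$, one uses the elementary identities $j!\binom pj\,(m-j)!\binom{p-j}{m-j}=p!/(p-m)!$ and $\binom qj\binom{q-j}{m-j}=\binom qm\binom mj$ to see that this coefficient equals $(-1)^{j}m!\binom pm\binom qm\binom mj$, whose sum over $0\le j\le m$ is $m!\binom pm\binom qm(1-1)^{m}=\delta_{m,0}$. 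Hence $\sum_{j}(-1)^{j}j!\binom pj\binom qj\,S^n_{p-j,q-j}(\mathrm{Tr}^{j,n}f)=I_{p,q}(P_nf)$, and letting $n\to\infty$ (using $P_nf\to f$ and the $L^2$‑convergences established above) yields~(\ref{humey 2}).

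The main obstacle is the level‑$n$ identity: it must be set up so that, after the reduction to diagonal tensors, the combinatorial constants $k!\binom pk\binom qk$ and the definition of $\mathrm{Tr}^{k,n}$ (with its symmetrizing tensor products $\hat{\otimes}$, the normalisation factor $\sqrt{(p-k)!(q-k)!}$ in the isometry, and the role of $\rho=\|h\|^{2}$ versus the normalisation $\|h\|=\sqrt2$ used to define the chaos) match the scalar complex‑Hermite inversion precisely. A secondary technical point is justifying, in~(\ref{humey 2}), that the traces of $\mathrm{Tr}^{k}f$ exist and coincide with $\mathrm{Tr}^{i+k}f$ so that $S_{p-k,q-k}(\mathrm{Tr}^{k}f)$ is meaningful: this passes from the level‑$n$ tower property to its limiting form and needs some care, because the operators $\mathrm{Tr}^{i,n}$ are not uniformly bounded in $n$.
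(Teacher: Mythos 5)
For formula (\ref{humey 1}) your argument is essentially the paper's: the paper derives the same level-$n$ identity $S^n_{p,q}(f)=\sum_k k!{p\choose k}{q\choose k}I_{p-k,q-k}(\mathrm{Tr}^{k,n}f)$ by expanding $Z(e_{i_1})\cdots Z(e_{i_p})\overline{Z(e_{l_1})}\cdots\overline{Z(e_{l_q})}$ with the product formula (\ref{ii}) and substituting into (\ref{sn}) — this is exactly the alternative you mention in parentheses — and then lets $n\to\infty$ as you do; your polarization reduction to diagonal tensors $h^{\otimes p}\otimes\bar h^{\otimes q}$ and the monomial expansion $z^{m}\bar z^{n}=\sum_r r!{m\choose r}{n\choose r}\rho^{r}J_{m-r,n-r}(z,\rho)$ is a harmless variant. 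Where you genuinely diverge is (\ref{humey 2}): the paper settles it in one line by polarization, reducing to $f=h^{\otimes p}\otimes\bar h^{\otimes q}$ with $\norm{h}=\sqrt2$, where the formula degenerates to the explicit expansion (\ref{jal}); you instead invert the triangular system at level $n$ using the tower property $\mathrm{Tr}^{i,n}\circ\mathrm{Tr}^{j,n}=\mathrm{Tr}^{i+j,n}$ and the alternating binomial sum, and your combinatorics is correct (the coefficient of $I_{p-m,q-m}(\mathrm{Tr}^{m,n}f)$ is indeed $m!{p\choose m}{q\choose m}\sum_j(-1)^j{m\choose j}=\delta_{m,0}$, giving $I_{p,q}$ of the level-$n$ projection of $f$). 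Your inversion treats general $f$ directly, at the price of an extra limiting step; the paper's polarization is shorter but, strictly speaking, only proves (\ref{humey 2}) on the linear span of diagonal tensors and leaves the extension to general $f$ implicit.

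The caveat you raise at the end is the one genuine gap in your route, and you should not treat it as merely "secondary": to identify $\lim_n S^n_{p-k,q-k}(\mathrm{Tr}^{k,n}f)$ with $S_{p-k,q-k}(\mathrm{Tr}^{k}f)$ — which is what the right-hand side of (\ref{humey 2}) actually denotes — you need $\mathrm{Tr}^{i,n}(\mathrm{Tr}^{k}f)\to\mathrm{Tr}^{i+k}f$, i.e.\ the tower property for the limiting traces, so that part one applies to $\mathrm{Tr}^{k}f$. This does not follow from the level-$n$ identity $\mathrm{Tr}^{i,n}\mathrm{Tr}^{k,n}f=\mathrm{Tr}^{i+k,n}f$ together with $\mathrm{Tr}^{k,n}f\to\mathrm{Tr}^{k}f$, precisely because the operators $\mathrm{Tr}^{i,n}$ are not uniformly bounded in $n$; without it, what your computation proves is the identity $I_{p,q}(f)=\sum_k(-1)^k k!{p\choose k}{q\choose k}\sum_i i!{p-k\choose i}{q-k\choose i}I_{p-k-i,q-k-i}(\mathrm{Tr}^{i+k}f)$, not literally (\ref{humey 2}). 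You should either prove this limiting tower property or add it (equivalently, the Stratonovich integrability of each $\mathrm{Tr}^{k}f$) to the hypotheses. In fairness, the paper's own proof of (\ref{humey 2}) is no more complete here: the polarization step silently assumes both that $S_{p-k,q-k}(\mathrm{Tr}^{k}f)$ is well defined for general $f$ and that the identity survives the passage from the dense span of diagonal tensors to $f$, which is the same continuity issue in different clothing.
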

\begin{proof}
The product formula implies that
\begin{align*}
& Z(e_{i_1})\dots Z(e_{i_p})\overline{Z({e_{l_1}})}\dots\overline{Z({e_{l_q}})}\\
&=I_{p, 0}(e_{i_1}\hat{\otimes} \dots\hat{\otimes}e_{i_p})I_{0,q}(\bar{e}_{l_1}\hat{\otimes} \dots\hat{\otimes}\bar{e}_{l_q})\\
&=\sum_{k=0}^{p\wedge q} {p \choose k}{q \choose k} k! I_{p-k,q-k}\big((e_{i_1}\hat{\otimes} \dots\hat{\otimes}e_{i_p})\otimes_{k,k} (\bar{e}_{l_1}\hat{\otimes} \dots\hat{\otimes}\bar{e}_{l_q})\big)\\
&=\sum_{k=0}^{p\wedge q} {p \choose k}{q \choose k} k! I_{p-k,q-k}\big((e_{i_{k+1}}\hat{\otimes} \dots\hat{\otimes}e_{i_{p}})\otimes (\bar{e}_{l_{k+1}}\hat{\otimes} \dots\hat{\otimes}\bar{e}_{l_{q}} ) \big) \delta_{i_1,l_1}\cdots \delta_{i_k,l_k},
\end{align*}
where $\delta_{i,j}$ is the Kronecker delta. By substituting the above equation displayed into Eq. (\ref{sn}), we obtain that 
\begin{align*}
S^n_{p,q}(f)=\sum_{k=0}^{p\wedge q}  k! {p \choose k}{q \choose k} I_{p-k,\,q-k}(\mathrm{Tr}^{k,n} f).
\end{align*}Taking the limit, one implies that the formula (\ref{humey 1}) holds.

By the linear property and the polarization technique, to show (\ref{humey 2}), it suffices to show it for the case of $f=h^{\otimes p}\otimes \bar{h}^{\otimes q}$ with $\norm{h}=\sqrt2$.  In this case, (\ref{humey 2}) is degenerated to the identity (\ref{jal}). 
\end{proof}


\subsection{The relation between the real and complex Wiener-It\^o integrals}

The real and imaginary parts of a complex multiple Wiener-It\^o integral of $(m,\,n)$-order can
be expressed as real integrals of order $m+n$ \cite{cl2}. In details, 
denote by $\FH_{\R}$ the real Hilbert space such that $\FH=\FH_{\R}+\mi \FH_{\R}$ and
suppose that  $X=\set{X(h):\,h\in \FH_{\Rnum}}$ is an isonormal Gaussian process over $\FH_{\Rnum}$ and that 
$Y$ is an independent copy of $X$. Clearly, a realization of the isonormal Gaussian process $W$ over the Hilbert space direct sum space $\FH_{\Rnum}\oplus \FH_{\Rnum}$ is
\begin{align*}
      W(h,f)&=X(h)+Y(f),\qquad\forall h,f\in\FH_{\Rnum}.
   \end{align*}
Then one obtains that \cite{cl2}
\begin{thm}\label{pp2} 
Suppose $\varphi\in \FH^{\odot m}\otimes \FH^{\odot n}$ and $F={I}_{m,n}(\varphi)=F_1+\mi F_2$. Then
     there exist real $u,\,v\in (\FH_{\R}\oplus \FH_{\R})^{\odot (m+n)}$ such that
      \begin{align}
        F_1&=\mathcal{I}_{m+n}(u),\quad F_2=\mathcal{I}_{m+n}(v),\label{eeq}
      \end{align}
      where $\mathcal{I}_p(g)$ is the $p$-th real Wiener-It\^{o} multiple integral of $g$ with respect to $W$. 
\end{thm}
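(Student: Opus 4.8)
The plan is to reduce the complex multiple integral $I_{m,n}(\varphi)$ to a real multiple integral over the doubled Hilbert space $\FH_\Rnum\oplus\FH_\Rnum$, exploiting the explicit expression $(\ref{jal})$ of the complex Hermite polynomial $J_{m,n}$ in terms of ordinary monomials, together with the classical relation between products of real Hermite polynomials and real multiple integrals. First I would observe that, by the polarization technique (as already used in the proof of Proposition~\ref{prop d imn}), it suffices to treat the elementary kernel $\varphi=h^{\otimes m}\otimes\bar h^{\otimes n}$ with $h\in\FH$; writing $h=h_1+\mi h_2$ with $h_1,h_2\in\FH_\Rnum$, one has $Z(h)=W(h_1,-h_2)+\mi\,W(h_2,h_1)$ by the defining covariance relations of the complex isonormal process together with the construction $W(h,f)=X(h)+Y(f)$ (this is exactly the point where $Y$, the independent copy of $X$, enters, since $\mathrm{Re}\,Z(h)$ and $\mathrm{Im}\,Z(h)$ must be jointly Gaussian with the correct $2$-dimensional covariance). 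Set $\rho=\|h\|_\FH^2=\|h_1\|_{\FH_\Rnum}^2+\|h_2\|_{\FH_\Rnum}^2$. Then $I_{m,n}(\varphi)=J_{m,n}(Z(h),\rho)$, and the two real vectors $a:=(h_1,-h_2),\ b:=(h_2,h_1)\in\FH_\Rnum\oplus\FH_\Rnum$ are orthogonal with $\|a\|^2=\|b\|^2=\rho$, so $W(a)$ and $W(b)$ are i.i.d.\ centered real Gaussians of variance $\rho$.

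The key computational step is then to expand $J_{m,n}(Z(h),\rho)=J_{m,n}(W(a)+\mi W(b),\rho)$ using $(\ref{jal})$, collect real and imaginary parts, and re-express each resulting monomial $W(a)^k W(b)^\ell$ (with $k+\ell\le m+n$, and one checks the homogeneity forces $k+\ell$ to have the same parity as $m+n$ in each surviving term, with the top terms $k+\ell=m+n$) via the classical one-dimensional identity $W(a)^kW(b)^\ell = \mathcal I_{k+\ell}$ of the appropriate symmetrized tensor built from $a$ and $b$, modulo lower-order multiple integrals coming from the Hermite-to-monomial conversion; more cleanly, I would instead write each monomial as a product $H_k\!\big(W(a)/\sqrt\rho\big)H_\ell\!\big(W(b)/\sqrt\rho\big)$ up to lower order and use $H_k(W(a)/\sqrt\rho)H_\ell(W(b)/\sqrt\rho)=\mathcal I_{k+\ell}\big(a^{\odot k}\,\hat\otimes\, b^{\odot \ell}/\rho^{(k+\ell)/2}\big)$ together with the product formula for real multiple integrals to handle contractions. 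Since $a\perp b$ there are in fact \emph{no} contractions between the $a$-slots and the $b$-slots, which makes the bookkeeping manageable: every term lands in a single real chaos of some order $\le m+n$, and after summing, the components of order strictly below $m+n$ must cancel (because $I_{m,n}(\varphi)\in\mathscr H_{m,n}$ maps into $\mathcal H_{m+n}$ under the decomposition — this is the content one is really proving), leaving $F_1=\mathcal I_{m+n}(u)$, $F_2=\mathcal I_{m+n}(v)$ for explicit symmetric $u,v\in(\FH_\Rnum\oplus\FH_\Rnum)^{\odot(m+n)}$ obtained as real-linear combinations of symmetrized tensor powers $a^{\odot k}\hat\otimes b^{\odot(m+n-k)}$.

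The main obstacle I anticipate is \emph{not} the algebra of a single elementary kernel but the passage back from elementary kernels to a general symmetric $\varphi\in\FH^{\odot m}\otimes\FH^{\odot n}$: polarization produces $u,v$ as real-linear (not complex-linear) combinations of the elementary outputs, and one must check that the resulting map $\varphi\mapsto(u,v)$ is well defined and $\Rnum$-linear and continuous from $\FH^{\odot m}\otimes\FH^{\odot n}$ (a complex Hilbert space, viewed as a real space of twice the dimension) into $(\FH_\Rnum\oplus\FH_\Rnum)^{\odot(m+n)}$, which amounts to verifying that the correspondence respects the real and imaginary parts of $\varphi$ consistently — the isometry $(\ref{mulint})$ and the isometry property of $\mathcal I_{m+n}$ give the needed continuity once linearity is in place. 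A secondary technical point is keeping track of the parity/homogeneity constraint so that the lower-order pieces genuinely cancel; I would verify this either by the direct expansion above or, more slickly, by noting $F=I_{m,n}(\varphi)\in\mathscr H_{m,n}(Z)$ is a polynomial of degree exactly $m+n$ in the Gaussian family and orthogonal to all real chaoses of lower order, so its real chaos expansion is concentrated in $\mathcal H_{m+n}$ automatically — which bypasses the cancellation computation entirely and I would adopt as the cleanest route.
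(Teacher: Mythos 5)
Your proposal is correct in substance, but it takes a genuinely different route from the paper's. The paper deduces Theorem~\ref{pp2} directly from Proposition~\ref{prop 3.5} (quoted from \cite{cl2}): explicit algebraic identities expressing $J_{k,l}(Z(h))$ as a finite linear combination of real Hermite polynomials $H_{k+l}(X(f_i)+Y(g_i))$ (built on Proposition~\ref{2dim2} and the matrix in (\ref{matr})), which gives $\mathscr{H}_{k,l}\subset\mathcal{H}_{k+l}(W)+\mi\,\mathcal{H}_{k+l}(W)$ with explicit kernels and in fact the full two-sided identity (\ref{newchaos}). You instead reduce by polarization to $\varphi=h^{\otimes m}\otimes\bar h^{\otimes n}$ and then, in the variant you adopt, argue softly: $J_{m,n}(Z(h),\rho)$ is a polynomial of degree $m+n$ in the jointly Gaussian family $(X,Y)$, so its real chaos expansion stops at order $m+n$, while orthogonality of $\mathscr{H}_{m,n}$ to everything of lower total degree (any polynomial of degree at most $q$ in the $W$-variables lies in $\bigoplus_{k+l\le q}\mathscr{H}_{k,l}$, by the product or monomial formulas already available in Section 2 and the Appendix, so there is no circularity) kills all components of order below $m+n$; closedness of $\mathcal{H}_{m+n}(W)$ and the isometry (\ref{mulint}) then pass the conclusion from elementary kernels to general $\varphi$. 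This delivers exactly the existence statement of the theorem, more cheaply than the paper; what it does not deliver, and the paper's route does, are the explicit kernels and coefficients and the identity (\ref{newchaos}) exploited later (see Remark~\ref{newchaosre}). Two small corrections for your write-up: with the normalization $E[Z(g)\overline{Z(h)}]=\langle g,h\rangle_{\FH}$ and $X,Y$ isonormal over $\FH_{\R}$, the identification should be $Z(h)=\tfrac{1}{\sqrt2}\bigl(W(h_1,-h_2)+\mi\,W(h_2,h_1)\bigr)$ rather than the unscaled formula you wrote (which gives $E|Z(h)|^2=2\|h\|_{\FH}^2$); and the orthogonality-to-lower-real-chaoses step should be spelled out as above, since it is precisely what replaces the paper's explicit Hermite identities.
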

Beside being used to show Theorem~\ref{chaos}, the above theorem has other applications such as to show complex fourth moment theorems. It is implied from the following proposition \cite{cl2}.
\begin{prop}\label{prop 3.5}
Suppose that $\norm{f}_{\FH_{\R}}^2+\norm{g}_{\FH_{\R}}^2=1$,  then  for any fixed $\theta\in \Rnum$,
    \begin{align*} 
   H_n\big(X(f) +Y(g)\big) = \sum_{k=0}^n \, d_k  J_{k,n-k}(Z({h})),
\end{align*} where 
${h}=\sqrt2 e^{\mi \theta}(f-\mi g)$,
and
\begin{align*} 
   d_k=\frac{1}{2^n}\sum_{r+s=k}(-1)^s \sum_{l=0}^n {n \choose l}{l \choose r}{n-l\choose s}(\cos\theta)^l(\mi \cdot\sin\theta)^{n-l}.
\end{align*}
Suppose that $\FH \ni {h} $ with $\norm{ {h}}_{\FH }=\sqrt{2}$, then
\begin{equation*} 
  J_{k,n-k}(Z( {h}))= \sum_{i=0}^n\tilde{c}_i H_n(X(f_i)+Y(g_i)),
\end{equation*}
where 
$f_i+\mi g_i=\frac{1}{\sqrt2} e^{\mi \theta_i}\bar{ {h}}$,
and
\begin{equation*}  
   \tilde{c}_i=\sum_{j=0}^n\tensor{M}^{-1}_{j,i}{\mi^{n-j}}\sum_{r+s=j}{k \choose r}{n-k \choose s}(-1)^{n-k-s},
\end{equation*} where the matrix $M$ is given as in (\ref{matr}). That is to say
\begin{align*}
 \mathcal{H}_n(W)=\bigoplus_{k+l=n}\mathcal{H}_k(X)\mathcal{H}_l(Y),
\end{align*}
\begin{align*}
  \mathcal{H}_n(W)+\mi \mathcal{H}_n(W)=\bigoplus_{k+l=n}\mathscr{H}_{k,l},
\end{align*}
\begin{align}\label{newchaos}
  L^2(\Omega,\sigma(Z),P)=\bigoplus^{\infty}_{n=0} \big(\mathcal{H}_n(W)+\mi \mathcal{H}_n(W)\big)=\bigoplus^{\infty}_{n=0}\bigoplus_{k+l=n}\mathscr{H}_{k,l}=\bigoplus^{\infty}_{m=0}\bigoplus^{\infty}_{n=0}\mathscr{H}_{m,n}.
 \end{align}
Where $ \mathcal{H}_n(W),  \mathcal{H}_n(X)$ and $ \mathcal{H}_n(Y)$ are the $n$-th Wiener-It\^o Chaos with respect to $W,X$ and $Y$ respectively.

\end{prop}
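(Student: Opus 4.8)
The plan is to reduce the whole statement to computations for a two--dimensional standard Gaussian and to push everything through generating functions. The starting point is the realisation of $Z$ compatible with $(\ref{gene})$: for real $\psi\in\FH_\R$ set $Z(\psi)=\tfrac1{\sqrt2}\big(X(\psi)+\mi Y(\psi)\big)$ and extend $\Cnum$--linearly. Then $E[Z(\psi)^2]=0$, $E[Z(\psi)\overline{Z(\varphi)}]=\langle\psi,\varphi\rangle_\FH$, and, since $X(\psi)=\sqrt2\,\RE Z(\psi)$ and $Y(\psi)=\sqrt2\,\IM Z(\psi)$, one has $\sigma(Z)=\sigma(W)$. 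Writing out $h=\sqrt2 e^{\mi\theta}(f-\mi g)$ gives the key identity
$$Z(h)=e^{\mi\theta}\big(W_1+\mi W_2\big),\qquad W_1:=X(f)+Y(g)=W(f,g),\quad W_2:=Y(f)-X(g)=W(-g,f),$$
where the normalisation $\norm f_{\FH_\R}^2+\norm g_{\FH_\R}^2=1$ makes $(W_1,W_2)$ a standard two--dimensional Gaussian and $\norm h_\FH^2=2$, i.e. we always work with $\rho=2$.

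For the first identity, compare $(\ref{gene})$ (at $\rho=2$) with the Hermite generating function $e^{tW-t^2/2}=\sum_{p\ge0}\frac{t^p}{p!}H_p(W)$. Putting $\lambda e^{-\mi\theta}=a+\mi b$, a direct computation yields $\lambda\overline{Z(h)}+\bar\lambda Z(h)-2\abs\lambda^2=(2aW_1-2a^2)+(2bW_2-2b^2)$, so that
\begin{align*}
\sum_{m,n'\ge0}\frac{\bar\lambda^{m}\lambda^{n'}}{m!\,n'!}\,J_{m,n'}(Z(h))=\exp\set{\lambda\overline{Z(h)}+\bar\lambda Z(h)-2\abs\lambda^2}=\sum_{p,q\ge0}\frac{(2a)^{p}(2b)^{q}}{p!\,q!}\,H_p(W_1)H_q(W_2).
\end{align*}
Substituting $\lambda=e^{\mi\theta}(a+\mi b)$, setting $b=0$, and comparing the coefficients of $a^{n}$ on the two sides gives
$$H_n\big(X(f)+Y(g)\big)=\sum_{k=0}^{n}\frac1{2^{n}}\binom nk e^{\mi\theta(n-2k)}\,J_{k,n-k}(Z(h)),$$
and a short chain of binomial identities (factor $e^{\mi\theta(n-2k)}=e^{\mi\theta(n-k)}(\cos\theta-\mi\sin\theta)^{k}$, re-expand the two factors by the binomial theorem, and use $\binom nr\binom{n-r}{s}=\binom nk\binom kr$ when $r+s=k$) shows that $\tfrac1{2^n}\binom nk e^{\mi\theta(n-2k)}$ equals the triple sum defining $d_k$. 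This proves the first displayed equation.

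For the converse, note that $f_i+\mi g_i=\tfrac1{\sqrt2}e^{\mi\theta_i}\bar h$ is precisely the situation just treated with $\theta=\theta_i$, $f=f_i$, $g=g_i$: indeed $h=\sqrt2 e^{\mi\theta_i}(f_i-\mi g_i)$, $\norm{f_i}_{\FH_\R}^2+\norm{g_i}_{\FH_\R}^2=\tfrac12\norm h_\FH^2=1$, and $X(f_i)+Y(g_i)=\RE\big(e^{-\mi\theta_i}Z(h)\big)$. Hence the first part yields, for $i=0,\dots,n$, the relations $H_n(X(f_i)+Y(g_i))=\sum_{k=0}^{n}\tfrac1{2^{n}}\binom nk e^{\mi\theta_i(n-2k)}J_{k,n-k}(Z(h))$, an $(n+1)\times(n+1)$ linear system for the unknowns $J_{0,n}(Z(h)),\dots,J_{n,0}(Z(h))$ (which are genuine unknowns, being $n+1$ mutually orthogonal nonzero elements of the $(n+1)$--dimensional $n$-th homogeneous chaos of $(\RE Z(h),\IM Z(h))$). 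Rewriting both sides in a common monomial basis of that space --- the basis in which the coordinates of $J_{k,n-k}(Z(h))$ are read off from the expansion of $\bar\lambda^k\lambda^{n-k}=(a-\mi b)^k(a+\mi b)^{n-k}$ and are of the form $\mi^{n-j}\sum_{r+s=j}\binom kr\binom{n-k}{s}(-1)^{n-k-s}$ up to the overall normalisation --- turns the system into the one encoded by the matrix $M$ of $(\ref{matr})$; choosing the phases $\theta_0,\dots,\theta_n$ pairwise distinct modulo $\pi$ makes $M$ of Vandermonde type, hence invertible, and solving it gives $J_{k,n-k}(Z(h))=\sum_{i=0}^{n}\tilde c_i\,H_n(X(f_i)+Y(g_i))$ with $\tilde c_i=\sum_{j=0}^{n}M^{-1}_{j,i}\,\mi^{n-j}\sum_{r+s=j}\binom kr\binom{n-k}{s}(-1)^{n-k-s}$. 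I expect this step --- pinning down the intermediate basis, identifying $M$ with $(\ref{matr})$, and checking its invertibility --- to be the main (purely combinatorial) obstacle, even though it is elementary once the two--dimensional reduction is in place.

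Finally, the chaos--decomposition statements follow by passing to closed linear spans. The classical decomposition of the Wiener chaos of a sum of independent isonormal processes gives $\mathcal H_n(W)=\bigoplus_{k+l=n}\mathcal H_k(X)\mathcal H_l(Y)$ (provable by the same Hermite generating--function manipulation, with $X\perp Y$ supplying the orthogonality). By definition, $\mathcal H_n(W)+\mi\mathcal H_n(W)$ is the complex closed span of $\set{H_n(X(f)+Y(g)):\norm f_{\FH_\R}^2+\norm g_{\FH_\R}^2=1}$, and $\bigoplus_{k+l=n}\mathscr H_{k,l}$ is the complex closed span of $\set{J_{k,n-k}(Z(h)):k+l=n,\ \norm h_\FH=\sqrt2}$; the two polynomial identities above (taking $\theta=0$ in the first, and arbitrary $h$ in the second) show that these algebraic spans coincide, hence so do their closures, and therefore $\mathcal H_n(W)+\mi\mathcal H_n(W)=\bigoplus_{k+l=n}\mathscr H_{k,l}$. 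Summing over $n$, invoking the real chaos expansion in the form $L^2(\Omega,\sigma(W),P;\Cnum)=\bigoplus_{n\ge0}\big(\mathcal H_n(W)+\mi\mathcal H_n(W)\big)$ together with $\sigma(Z)=\sigma(W)$, and using the reindexing $\bigoplus_{n}\bigoplus_{k+l=n}\mathscr H_{k,l}=\bigoplus_{m,n\ge0}\mathscr H_{m,n}$, yields the remaining chain of equalities --- in particular the ``algebraic'' proof of Theorem~\ref{chaos}(ii) anticipated in the text.
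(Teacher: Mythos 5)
Your reduction to the two--dimensional Gaussian $(W_1,W_2)=(X(f)+Y(g),\,Y(f)-X(g))$ via $Z(h)=e^{\mi\theta}(W_1+\mi W_2)$ is correct and is in substance the same route the paper (following \cite{cl2}) takes, namely working in the degree-$n$ chaos of the pair $(\RE Z(h),\IM Z(h))$ with the Hermite-product basis and the matrix $M$ of (\ref{matr}). Your generating-function proof of the first identity is complete and correct: the identity $d_k=2^{-n}{n\choose k}e^{\mi\theta(n-2k)}$ does hold (it follows, e.g., by comparing $\sum_k{n\choose k}e^{\mi\theta(n-2k)}x^k=(e^{\mi\theta}+xe^{-\mi\theta})^n$ with the triple sum), and the closing span arguments for $\mathcal{H}_n(W)+\mi\mathcal{H}_n(W)=\bigoplus_{k+l=n}\mathscr{H}_{k,l}$ and (\ref{newchaos}) are fine.

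The gap is in the second identity, and you flag it yourself: the formula for $\tilde c_i$ is never actually derived. Moreover, the recipe you propose for the intermediate coordinates is not correct as stated. Reading off the coefficient of $a^jb^{n-j}$ in $\bar\lambda^k\lambda^{n-k}=(a-\mi b)^k(a+\mi b)^{n-k}$ gives $(-\mi)^{n-j}\sum_{r+s=j}{k\choose r}{n-k\choose s}(-1)^{n-k-s}$, which differs from the asserted coordinate by the $j$-dependent factor $(-1)^{n-j}$, so this is not ``up to an overall normalisation''; more importantly, comparing coefficients of $a^jb^{n-j}$ in the generating identity $\sum_{m,n'}\frac{\bar\lambda^m\lambda^{n'}}{m!n'!}J_{m,n'}(Z(h))=\sum_{p,q}\frac{(2a)^p(2b)^q}{p!q!}H_p(\RE Z(h))H_q(\IM Z(h))$ expresses $H_jH_{n-j}$ in terms of the $J$'s (the transpose direction), while extracting the coefficient of $\bar\lambda^k\lambda^{n-k}$ gives $J_{k,n-k}(Z(h))=\sum_j\frac{k!(n-k)!}{j!(n-j)!}(-\mi)^{n-j}\bigl[\sum_{r+s=k}{j\choose r}{n-j\choose s}(-1)^s\bigr]H_j(\RE Z(h))H_{n-j}(\IM Z(h))$, and matching this with the coefficients appearing in $\tilde c_i$ is exactly the combinatorial identity you defer. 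The clean way to close the argument — and the one the paper intends — is to invoke Proposition~\ref{2dim2}: its first identity gives $J_{k,n-k}(z)=\sum_j\mi^{n-j}\sum_{r+s=j}{k\choose r}{n-k\choose s}(-1)^{n-k-s}H_j(x)H_{n-j}(y)$ for $z=x+\mi y$, its last identity gives $H_j(x)H_{n-j}(y)=\sum_i\tensor{M}^{-1}_{j,i}H_n(x\cos\theta_i+y\sin\theta_i)$, and the elementary check you already perform implicitly, $x\cos\theta_i+y\sin\theta_i=X(f_i)+Y(g_i)$ when $f_i+\mi g_i=\tfrac1{\sqrt2}e^{\mi\theta_i}\bar h$, then yields the stated $\tilde c_i$ directly, with no inversion of your $(n+1)\times(n+1)$ system needed (the Vandermonde-type invertibility of $M$ for distinct $\theta_i\in(0,\pi)$ is only required for $M^{-1}$ to make sense).
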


\begin{remark}\label{newchaosre}
   On the one hand, the first equality in (\ref{newchaos}) may be regarded as a new chaos decomposition of complex Gaussian systems in spirit of Wiener, since  $L^2$ function is expanded by a series of polynomials of  Gaussian random vector $W=(X,Y)$.  On the other hand, along the equality from the left side to the right side in (\ref{newchaos}), this equality provides a way to prove the chaos decomposition in sense of It\^o (see section 4 in \cite{cl2} for the details of this proof).  
   
 This decomposition can be applied to show the Gaussian and the non-Gaussian center limit theorem of complex Wiener-It\^o multiple integrals (see \cite{cl2}).
   
\end{remark}


\section{Complex Ornstein-Uhlenbeck operators and semigroups}


In this section,  one obtains several analytical properties of multiple Wiener-It\^{o} integrals such as their norm equivalence and exponential integrability, and one expands the fourth moment of multiple Wiener-It\^{o} integrals in terms of their Malliavin derivatives and  the contractions of kernels of multiple Wiener-It\^{o} integrals. The method is by means of Malliavin divergence operators and Ornstein-Uhlenbeck operators. 
\subsection{Malliavin divergence operators and Clark-Ocone formula}

The divergence operators $\delta$ and $\bar{\delta} $ are defined as the adjoint of $D$ and $\bar{D}$ respectively, with the domains $\mathrm{Dom}( \delta) $ and $\mathrm{Dom}( \bar{\delta}) $ the subsets of $L^2(\Omega,\FH )$ composed of those elements $u$ such that there exists a constant $c>0$ verifying for all $ F\in \mathcal{S}$,
   \begin{align*}
     \big|E[\innp{D F,u}]\big|\le c\norm{F}, \quad(\text{resp. } \big|E[\innp{\bar{D} F,u}]\big|\le c\norm{F}).
   \end{align*}
   If $u\in\mathrm{Dom}( \delta ) $ or $u\in\mathrm{Dom}( \bar{\delta} )$, then $ \delta  u$ and $\bar{\delta}u$ are the unique element of $L^2(\Omega)$ given respectively by the following duality formula: for all $ F\in \mathcal{S}$,
   \begin{align}\label{dul form}
      E[(\delta  u)\times \bar{F}]=E[\innp{u,\, D F}],\quad \big(\text{resp. }
      E[( \bar{\delta}  u)\times \bar{F}]=E[\innp{u,\,\bar{D} F}]\big) .
   \end{align}

Consider a special case that  $\set{Z_t,\,t\ge 0}$ is a  complex one-dimensional fractional Brownian motion with a fixed Hurst index $H\in (0,1)$.
Let $L^2_a$ be the set of square integrable random variables adapted to the completed sigma-field filter $\mathcal{F}_t=\sigma(Z_s,\,s\le t)$. 
By the chaos expansion,  one has a Clark type representation of a functional of fractional Brownian motion. The proof is the same as the real case, please refer to \cite{Hu 05} for details.
\begin{prop}[Clark type representation]\label{itorep}
 Let random variable $F\in  L^2(\Omega,\sigma(Z),P)$. Then there exist two stochastic processes 
 $\varphi(t,\omega)\in L^2_a$ and $\psi(t,\omega)\in L^2_a$ almost everywhere uniquely such that 
\begin{equation}
   F=\E[F]+\int_0^{\infty}\varphi(t,\omega)\dif Z_t+\int_0^{\infty}\psi(t,\omega)\dif \bar{Z}_t ,
\end{equation} where the integral is a divergence integral.
\end{prop}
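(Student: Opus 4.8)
The plan is to transcribe into the complex fractional setting the standard chaos-expansion proof of the Clark-Ocone representation for fractional Brownian motion (see \cite{Hu 05}): expand $F$ in Wiener-It\^o chaos, use the derivative formulas of Proposition~\ref{prop d imn} to detach one integration variable from each component, and replace the resulting non-adapted integrands by their conditional expectations onto $\mathcal{F}_t$. Concretely, I would first invoke Stroock's formula (Theorem~\ref{chaos}) to write $F=\E[F]+\sum_{(p,q)\neq(0,0)}I_{p,q}(f_{p,q})$ with symmetric kernels $f_{p,q}\in\FH^{\odot p}\otimes\FH^{\odot q}$, the series converging in $L^2(\Omega)$, and reduce by linearity to a single chaos $F=I_{p,q}(f)$, keeping track of constants so the components can be resummed. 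For such $F$, \eqref{d i mn} gives
\begin{align*}
 D_tF=p\,I_{p-1,q}\big(f(t,\cdot,\cdot)\big),\qquad \bar D_sF=q\,I_{p,q-1}\big(f(\cdot,s,\cdot)\big),
\end{align*}
and one sets $\varphi(t,\omega)=\E[D_tF\mid\mathcal{F}_t]$ and $\psi(s,\omega)=\E[\bar D_sF\mid\mathcal{F}_s]$, which lie in $L^2_a$.

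The heart of the argument is the identity $\delta(\varphi)+\bar\delta(\psi)=F-\E[F]$, which rests on three ingredients. (i) The divergence of a process valued in a fixed chaos is a chaos of one higher degree whose kernel is the symmetrization of the enlarged kernel in the $Z$-type slots (for $\delta$), resp.\ the $\bar Z$-type slots (for $\bar\delta$); since $f$ is already symmetric, the only surviving effect of that symmetrization is an average over which of the retained arguments is the latest. (ii) Conditioning on $\mathcal{F}_t=\sigma(Z_s:s\le t)$ acts on a multiple integral by projecting its kernel onto the subspace generated by $\{Z_s:s\le t\}$; after transporting to the underlying complex Brownian motion, whose natural filtration equals $\mathcal{F}_t$, this is the restriction of the kernel to $[0,t]$. (iii) The a.e.\ identity $\one_{\{\max_j s_j\le\max_i t_i\}}+\one_{\{\max_i t_i\le\max_j s_j\}}=1$. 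Combining these yields $\delta(\varphi)=I_{p,q}\big(f\,\one_{\{\max_j s_j\le\max_i t_i\}}\big)$ and $\bar\delta(\psi)=I_{p,q}\big(f\,\one_{\{\max_i t_i\le\max_j s_j\}}\big)$, whose sum is $I_{p,q}(f)=F$. Summing over $(p,q)$, using the orthogonality of the spaces $\mathscr{H}_{p,q}$ and that the maps $F\mapsto\delta(\varphi)$ and $F\mapsto\bar\delta(\psi)$ are norm-non-increasing on each chaos, extends the representation to every $F\in L^2(\Omega,\sigma(Z),P)$, the integrands of a generic $F$ being the $L^2$-limits of those of its chaos truncations; the divergence-integral notation $\int_0^\infty\varphi\,\dif Z_t=\delta(\varphi)$ and $\int_0^\infty\psi\,\dif\bar Z_t=\bar\delta(\psi)$ is then a rewriting.

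For uniqueness, I would decompose $\varphi$ and $\psi$ themselves into Wiener-It\^o chaos; adaptedness pins down the supports of their chaos kernels so that, component by component, the chaos expansion of $\delta(\varphi)+\bar\delta(\psi)$ recovers those kernels unambiguously, and hence $\delta(\varphi)+\bar\delta(\psi)=0$ forces $\varphi=\psi=0$ $\dif t\otimes\dif P$-a.e. The step I expect to be the main obstacle is ingredient (ii): making rigorous how $\E[\,\cdot\mid\mathcal{F}_t]$ acts on the chaos when $\FH$ is not an $L^2$-space, so that ``restriction of the kernel to $[0,t]$'' must be interpreted through the fractional covariance isometry, and, relatedly, verifying that the adapted integrands $\varphi,\psi$ genuinely belong to $\mathrm{Dom}(\delta)$ and $\mathrm{Dom}(\bar\delta)$ for \emph{every} $F\in L^2$ --- which in the fractional case does not follow from a simple It\^o-type isometry as it does for ordinary Brownian motion; this is precisely where passing through the underlying complex Brownian motion, as in \cite{Hu 05}, carries the weight.
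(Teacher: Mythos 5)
Your overall skeleton (chaos expansion, detach one variable, use the a.e.\ decomposition by which argument is largest, resum) is exactly the route the paper has in mind when it says the proof is ``the same as the real case'' and cites \cite{Hu 05}. But your ingredient (ii) is a genuine gap, and it is the step your whole identity $\delta(\varphi)+\bar\delta(\psi)=F-\E[F]$ rests on. For fractional Brownian motion with $H\neq\tfrac12$ the conditional expectation does \emph{not} act on a $Z$-chaos by multiplying the kernel by $\one_{[0,t]}$: one has $\E[Z(h)\mid\mathcal{F}_t]=Z(P_t h)$, where $P_t$ is the $\FH$-orthogonal projection onto the closed span of $\{\one_{[0,s]}:s\le t\}$, and since the inner product of $\FH=L^2_\phi$ is non-local ($\phi(s,t)=\alpha_H|s-t|^{2H-2}$ couples $[0,t]$ with $(t,\infty)$), $P_th\neq h\one_{[0,t]}$ in general; the same failure propagates to higher chaoses. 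Your proposed fix, transporting to the underlying complex Brownian motion, repairs the conditioning formula but changes the integrator: you would then obtain a representation as divergence integrals against $W$, not against $Z_t$ and $\bar Z_t$ as the proposition asserts, unless you carry out the transfer-operator bookkeeping, which you leave entirely open (you yourself flag it as ``the main obstacle''). So, as written, the central identity is only justified for $H=\tfrac12$.

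The repair is to drop conditional expectations altogether, which is what the real-case proof you are transcribing actually does. For $F=I_{p,q}(f)$ define directly $\varphi(t)=p\,I_{p-1,q}\big(f(t,\cdot,\cdot)\,\one_{\{\cdot\le t\}}\big)$ and $\psi(s)=q\,I_{p,q-1}\big(f(\cdot,s,\cdot)\,\one_{\{\cdot\le s\}}\big)$; these are adapted because kernels supported in $[0,t]^{p+q-1}$ yield $\mathcal{F}_t$-measurable chaoses, and then your ingredients (i) and (iii) alone (symmetrization of the restricted kernel, plus $\one_{\{\max_j s_j\le\max_i t_i\}}+\one_{\{\max_i t_i\le\max_j s_j\}}=1$ a.e.) give $\delta(\varphi)+\bar\delta(\psi)=I_{p,q}(f)$ with no conditioning anywhere. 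The identification $\varphi(t)=\E[D_tF\mid\mathcal{F}_t]$ is precisely the separate Clark-Ocone statement (Theorem~3.3 of the paper), which is deduced \emph{afterwards} from the present proposition by duality, so proving the proposition through it also inverts the paper's logical order. Two smaller points: your claim that $F\mapsto\delta(\varphi)$ is norm-non-increasing on each chaos uses $\|f\one_B\|_{\FH^{\otimes(p+q)}}\le\|f\|_{\FH^{\otimes(p+q)}}$, which is not automatic for the non-local fractional norm and needs an argument (or should be replaced by the convergence argument of \cite{Hu 05}); and uniqueness cannot be read off from ``adaptedness pins down the supports of the chaos kernels'' for the same non-locality reason -- it should be derived from the duality/isometry relation for adapted integrands, as in the proof of Theorem~3.3.
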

It is routine to extend $\set{Z_t,\,t\ge 0}$ to a complex Gaussian isonormal process $Z=\set{Z(h):\,h\in\FH}$ for a suitable Hilbert space $\FH$.
\begin{thm}[Clark-Ocone formula]\label{cof}
Suppose that $\set{Z_t,t\ge 0}$ is a complex one-dimensional fractional Brownian motion with a fixed Hurst index $H\in (0,1)$. 
 If $F\in \DR^{1,2} \bigcap \bar{\DR}^{1,2} $, then
\begin{equation}
  F=\E[F]+ \int_0^{\infty}\E(D_t F|\mathcal{F}_t)\dif Z_t+\int_0^{\infty}\E(\bar{D}_t F|\mathcal{F}_t)\dif \bar{Z}_t,
\end{equation}  where the integral is a divergence integral.
\end{thm}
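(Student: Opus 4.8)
The plan is to follow the standard route for Clark--Ocone type formulas: establish the identity first on a dense subclass of simple functionals (finite chaos expansions), then extend to all of $\DR^{1,2}\bigcap\bar{\DR}^{1,2}$ by an approximation argument using the closability of the conditional expectation and of the divergence integral. More precisely, by Stroock's formula and the complex Wiener--It\^o chaos decomposition (Theorem~\ref{chaos}), every $F\in\DR^{1,2}\bigcap\bar{\DR}^{1,2}$ can be written as $F=\E[F]+\sum_{p+q\ge1}I_{p,q}(f_{p,q})$, and it suffices to prove the formula when $F=I_{p,q}(f)$ with $f\in\FH^{\odot p}\otimes\FH^{\odot q}$, since both sides are linear and continuous in $F$ with respect to the graph norm $\norm{\cdot}_{1,1,2}$.

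First I would treat the single-chaos case. By Proposition~\ref{prop d imn}, $D_tI_{p,q}(f)=pI_{p-1,q}(f(t,\cdot,\cdot))$ and $\bar D_sI_{p,q}(f)=qI_{p,q-1}(f(\cdot,s,\cdot))$. The point is then to identify $\E[D_tF\mid\mathcal F_t]$ with the integrand appearing when one ``peels off'' the last integration variable in the iterated-integral representation of $I_{p,q}(f)$. Concretely, for the fractional Brownian motion $\{Z_t\}$ one uses that a multiple Wiener--It\^o integral of a symmetric kernel can be expanded as a divergence integral $\int_0^\infty(\text{something})\,\dif Z_t+\int_0^\infty(\text{something})\,\dif\bar Z_t$ (Proposition~\ref{itorep}), and that conditioning on $\mathcal F_t$ kills the chaoses whose ``time support'' protrudes past $t$; the surviving part is exactly $p\,I_{p-1,q}(f(t,\cdot,\cdot))$ restricted to arguments $\le t$, i.e. $\E[D_tI_{p,q}(f)\mid\mathcal F_t]$, and symmetrically for $\bar D$. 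This reduces the single-chaos case to matching the Clark representation of Proposition~\ref{itorep} with the Malliavin-derivative expression, which is a bookkeeping computation with the kernels.

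Then I would pass to the general case. Given $F\in\DR^{1,2}\bigcap\bar{\DR}^{1,2}$, set $F_N=\E[F]+\sum_{p+q\le N}I_{p,q}(f_{p,q})$. Each $F_N$ satisfies the formula by the previous step and by linearity. As $N\to\infty$, $F_N\to F$ in $L^2(\Omega)$, $DF_N\to DF$ and $\bar DF_N\to\bar DF$ in $L^2(\Omega,\FH)$ (since $F\in\DR^{1,2}\bigcap\bar{\DR}^{1,2}$). Using that $u\mapsto\E[u_t\mid\mathcal F_t]$ is a contraction on $L^2(\Omega,\FH)$ (Jensen), the processes $t\mapsto\E[D_tF_N\mid\mathcal F_t]$ and $t\mapsto\E[\bar D_tF_N\mid\mathcal F_t]$ converge in $L^2$ to $t\mapsto\E[D_tF\mid\mathcal F_t]$ and $t\mapsto\E[\bar D_tF\mid\mathcal F_t]$ respectively; since these limits lie in $L^2_a$, the adaptedness and the $L^2_a$-uniqueness in Proposition~\ref{itorep} let me identify the divergence integrals of the limits with the divergence integrals of $\varphi,\psi$ from the Clark representation of $F$. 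Passing to the limit in the identity for $F_N$ yields the formula for $F$.

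The main obstacle I expect is the single-chaos identification step: one must be careful that for fractional Brownian motion the divergence (Skorokhod) integral is the correct object — it is not an It\^o integral, and "conditioning kills future chaoses" is cleanest for the standard Brownian filtration but needs the fBm-adapted formulation of Proposition~\ref{itorep}. The cleanest way around this is to invoke Proposition~\ref{itorep} as a black box (its existence and $L^2_a$-uniqueness are granted), prove that $t\mapsto\E[D_tF\mid\mathcal F_t]$ and $t\mapsto\E[\bar D_tF\mid\mathcal F_t]$ are admissible integrands, and then check that $F-\E[F]-\int_0^\infty\E[D_tF\mid\mathcal F_t]\,\dif Z_t-\int_0^\infty\E[\bar D_tF\mid\mathcal F_t]\,\dif\bar Z_t$ has vanishing Malliavin derivatives $D$ and $\bar D$, hence is a.s.\ constant (equal to its mean, which is $0$) — this uses the duality formula~(\ref{dul form}) together with Proposition~\ref{prop d imn} to compute the derivative of a divergence integral, exactly as in the real case treated in \cite{Hu 05}.
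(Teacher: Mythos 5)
Your plan hinges on the single-chaos identification, and that is precisely where it breaks down for fractional Brownian motion. The claim that ``conditioning on $\mathcal F_t$ kills the chaoses whose time support protrudes past $t$, and what survives is the kernel restricted to arguments $\le t$'' is a Brownian ($H=\tfrac12$) fact. In the present setting $\FH=L^2_\phi$ with $\innp{f,g}=\int_0^\infty\int_0^\infty f(s)\bar g(t)\phi(s,t)\,\dif s\,\dif t$, and $\E[I_{p,q}(f)\mid\mathcal F_t]$ is $I_{p,q}$ of the kernel projected, variable by variable, onto the closed span of $\set{\one_{[0,s]}:s\le t}$ in $L^2_\phi$; since $\phi$ couples disjoint time intervals, this projection is \emph{not} multiplication by $\one_{[0,t]}$, so the ``bookkeeping computation with the kernels'' you invoke does not produce $\E[D_tF\mid\mathcal F_t]$, and Proposition~\ref{itorep} gives no explicit $\varphi,\psi$ for a single chaos to match against. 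Your fallback is not a repair either: for $G=F-\E[F]-\int_0^\infty\E[D_tF\mid\mathcal F_t]\,\dif Z_t-\int_0^\infty\E[\bar D_tF\mid\mathcal F_t]\,\dif \bar Z_t$, checking $DG=\bar DG=0$ requires differentiating a divergence integral, and $D_s\delta(u)=u_s+\delta(D_su)$ produces extra divergence terms whose cancellation against $D_sF$ is essentially the theorem itself; as written that step is unproven (and the outer approximation layer is superfluous once the identification is done correctly).

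The missing ingredient is an identification by duality, which is what the paper does (following \cite{Hu 05}) and which bypasses any conditional expectation of chaoses. Start from the representation $F=\E[F]+\int_0^\infty\varphi_t\,\dif Z_t+\int_0^\infty\psi_t\,\dif\bar Z_t$ of Proposition~\ref{itorep}, take an \emph{arbitrary} adapted $u\in L^2_a$, and compute $\E[(\delta u)\,\bar F]$ in two ways: by the isometry for adapted integrands it equals $\int_0^\infty\int_0^\infty\E[u_t\bar\varphi_s]\,\phi(s,t)\,\dif t\,\dif s$, and by the duality formula (\ref{dul form}) it equals $\E[\innp{u,DF}]=\int_0^\infty\int_0^\infty\E[u_t\overline{D_sF}]\,\phi(s,t)\,\dif t\,\dif s$, where the adaptedness of $u$ lets one replace $D_sF$ by $\E[D_sF\mid\mathcal F_s]$. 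Since $u$ ranges over all of $L^2_a$, the a.e.\ uniqueness in Proposition~\ref{itorep} forces $\varphi_s=\E[D_sF\mid\mathcal F_s]$, and symmetrically $\psi_s=\E[\bar D_sF\mid\mathcal F_s]$. Without some version of this duality step, neither of your two routes closes.
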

\begin{proof} For simplicity, one suppose that $H\in (\frac12, \,1)$. The other case is similar. 
Denote $\alpha_H=H(2H-1)$ and  $\phi(s,t) =\alpha_H \abs{s-t}^{2H-2}$ and define the Hilbert space
\begin{align*}
   \FH&:=L^2_{\phi}=\{f |\,f:\R_{+}\to \Cnum,\abs{f}_{\phi}^2:=\int_0^{\infty}\int_0^{\infty}f(s)\bar{f}(t)\phi(s,t)\dif s\dif t <\infty \}.
\end{align*}

For any element $u\in L^2_a$, by It\^{o}'s isometry and Proposition~\ref{itorep}, one has that
\begin{align*}
    E[(\delta  u)\times \bar{F}]=\int_0^{\infty}\int_0^{\infty} \E[u_t \bar{\varphi}_s]\phi(s,t) \dif t\dif s,
\end{align*}
By the duality formula, one has that 
\begin{align*}
   E[(\delta  u)\times \bar{F}]&=\int_0^{\infty}\int_0^{\infty} \E[u_t \overline{D_s F}]\phi(s,t)\dif t\dif s \nonumber \\
   &=\int_0^{\infty} \int_0^{\infty} \E[u_t \overline{\E[D_s F|\mathcal{F}_s]}]\phi(s,t) \dif t \dif s.
\end{align*}
The above two equations imply that $\varphi_s= \E[D_s F|\mathcal{F}_s]$.
In the same way, one can show that $\psi_s=\E[\bar{D}_s F|\mathcal{F}_s] $.
\end{proof}

\subsection{Complex Ornstein-Uhlenbeck operators and the hypercontractivity of complex  Ornstein-Uhlenbeck semigroup }\label{hyperctc}

\begin{definition}\it 
   Complex Ornstein-Uhlenbeck operators are defined as
\begin{align}
   \tensor{L}=\delta D,\qquad \bar{\tensor{L}}=\bar{\delta} \bar{D}.
\end{align}
\end{definition}

\begin{prop}\label{prop ll imn 2}
Suppose that ${I}_{m,n}(f)$ is the complex Wiener-It\^{o} integral of $f$ with respect to $Z$ for $ f \in \FH^{\odot m}\otimes \FH^{\odot n}$. Then one has that
\begin{align}
   \tensor{L}(I_{m,n}(f) )&=m I_{m,n}(f),\qquad \bar{\tensor{L}}(I_{m,n}(f) )=n I_{m,n}(f).\label{l i mn}
\end{align} %
\end{prop}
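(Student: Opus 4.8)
The plan is to reduce the statement to the action of the Ornstein--Uhlenbeck operators on a single Fourier--Hermite building block, and then invoke the known derivative formulas from Proposition~\ref{prop d imn} together with the duality that defines $\delta$ and $\bar\delta$. First I would recall that $\tensor{L} = \delta D$ and, by Proposition~\ref{prop d imn}, $D_t(I_{m,n}(f)) = m\,I_{m-1,n}(f(t,\cdot,\cdot))$. Hence $\tensor{L}(I_{m,n}(f)) = m\,\delta\big(I_{m-1,n}(f(t,\cdot,\cdot))\big)$, and the whole problem becomes: show that $\delta$ applied to the $\FH$-valued random variable $t\mapsto I_{m-1,n}(f(t,\cdot,\cdot))$ returns $I_{m,n}(f)$. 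By the polarization technique (exactly as in the proof of Proposition~\ref{prop d imn}), it suffices to treat $f = h^{\otimes m}\otimes\bar h^{\otimes n}$ with $\rho = \norm{h}^2$, in which case $I_{m-1,n}(f(t,\cdot,\cdot)) = J_{m-1,n}(Z(h),\rho)\,h(t)$, i.e. the process is the scalar random variable $J_{m-1,n}(Z(h),\rho)$ times the fixed element $h\in\FH$.

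Next I would compute $\delta\big(J_{m-1,n}(Z(h),\rho)\,h\big)$ directly from the duality formula~(\ref{dul form}): for every smooth $F\in\mathcal{S}$,
\begin{align*}
   E\big[(\delta(J_{m-1,n}(Z(h),\rho)h))\,\bar F\big] = E\big[\innp{J_{m-1,n}(Z(h),\rho)h,\,DF}\big] = E\big[J_{m-1,n}(Z(h),\rho)\,\overline{\innp{DF,h}}\big].
\end{align*}
Wait --- one must be careful with the conjugate-linearity; I would instead use Proposition~\ref{prop 2_3} (integration by parts), which gives $E[Z(h)\bar F] = E[\innp{h,DF}]$, together with the recursion for complex Hermite polynomials from the Appendix, namely $z\,J_{m-1,n}(z,\rho) = J_{m,n}(z,\rho) + (m-1)\rho\, J_{m-2,n}(z,\rho)$ and the lowering relation $\partial_{\bar z} J_{m-1,n} = (m-1)\rho^{?}\cdots$ --- more precisely I would exploit that $J_{m,n}(Z(h),\rho) = \delta D$-type identities are equivalent to the classical three-term recursions. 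The cleanest route is: expand $J_{m-1,n}(Z(h),\rho)h$ in the orthonormal basis, apply $\delta$ term by term using the elementary fact that $\delta(h) = Z(h)$ for constant $h\in\FH$ combined with the product rule $\delta(Gu) = G\,\delta(u) - \innp{DG,u}$ (valid here because $D$ is the holomorphic derivative and $h$ is non-random), and then recognize the resulting combination $Z(h)J_{m-1,n}(Z(h),\rho) - (m-1)\rho J_{m-2,n}(Z(h),\rho)$ as exactly $J_{m,n}(Z(h),\rho) = I_{m,n}(h^{\otimes m}\otimes\bar h^{\otimes n})$ via~(\ref{jal}) or the generating function~(\ref{gene}). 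Multiplying by the prefactor $m$ yields $\tensor{L}(I_{m,n}(f)) = m\,I_{m,n}(f)$. The computation for $\bar{\tensor{L}} = \bar\delta\bar D$ is identical after swapping the roles of the holomorphic and antiholomorphic variables, using $\bar D_s(I_{m,n}(f)) = n\,I_{m,n-1}(f(\cdot,s,\cdot))$ and $\bar\delta(\bar h) = \overline{Z(h)}$.

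An alternative and perhaps more transparent proof I would mention as a remark: since $\{I_{m,n}(f)\}$ spans $\mathscr{H}_{m,n}$ and, by Proposition~\ref{prop d imn}, $D$ maps $\mathscr{H}_{m,n}$ into $L^2(\Omega,\FH)$ with a factor $m$ "lowering" the first index, while its adjoint $\delta$ must "raise" it back by~(\ref{dul form}), one gets $\delta D = \tensor{L}$ acting as a scalar on each chaos; pairing $\tensor{L}(I_{m,n}(f))$ against an arbitrary $I_{m,n}(g)$ and using the isometry~(\ref{mulint}) pins the scalar down to be $m$. The main obstacle in the direct approach is bookkeeping the conjugate-linearity of the $\FH$-inner product when passing through the duality formula and correctly identifying which complex-Hermite recursion to invoke; once the reduction to $f = h^{\otimes m}\otimes\bar h^{\otimes n}$ is in place this is a short calculation, but getting the constants and conjugations right is where care is needed.
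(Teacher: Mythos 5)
Your overall strategy---polarization to $f=h^{\otimes m}\otimes\bar h^{\otimes n}$ and then computing $\delta\big(J_{m-1,n}(Z(h),\rho)\,h\big)$ via a product rule for the divergence plus a complex Hermite recursion---is exactly the paper's route, but the execution fails at precisely the point you flagged as delicate. The rule you invoke, $\delta(Gh)=G\,\delta(h)-\innp{DG,\,h}$, is the real-case rule; with the sesquilinear duality (\ref{dul form}) defining $\delta$, testing against a smooth $F$ gives $E[GZ(h)\bar F]=E[Z(h)\overline{\bar G F}]=E[\innp{h,\,D(\bar G F)}]$, and the Leibniz expansion carries $D\bar G$, not $DG$. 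Hence the correct identity is $\delta(Gh)=G\,Z(h)-\innp{h,\,D\bar G}_{\FH}$ (equivalently the correction involves $\overline{\bar D G}$). For $G=J_{m-1,n}(Z(h),\rho)$ this correction is $n\rho\,J_{m-1,n-1}(Z(h),\rho)$, and the recursion (\ref{jm1n}) then gives $Z(h)J_{m-1,n}-n\rho J_{m-1,n-1}=J_{m,n}$, which is the paper's computation. Your version produces $Z(h)J_{m-1,n}-(m-1)\rho J_{m-2,n}$ and asserts it equals $J_{m,n}$; that identity is false (take $m=2$, $n=0$: it gives $z^2-\rho\neq z^2=J_{2,0}(z,\rho)$), and the term $(m-1)\rho J_{m-2,n}$ does not cancel the lower-order chaos $(m-1,n-1)$ component created by multiplying $J_{m-1,n}$ by $Z(h)$. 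The auxiliary recursion you quote, $zJ_{m-1,n}=J_{m,n}+(m-1)\rho J_{m-2,n}$, is likewise not one of the identities (\ref{jm1n})--(\ref{jmn1}). So the main argument, as written, does not prove the proposition.

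The alternative you sketch at the end---pairing $\delta D I_{m,n}(f)$ against multiple integrals through (\ref{dul form}), Proposition~\ref{prop d imn} and the isometry---can be turned into a correct proof, but not quite as stated: pairing only against $I_{m,n}(g)$ ``pins down the scalar'' only once you already know $\delta D I_{m,n}(f)\in\mathscr{H}_{m,n}$, which is part of what must be shown. You would need to pair against $I_{p,q}(g)$ for all $(p,q)$, use orthogonality of the chaoses to see the pairing vanishes unless $(p,q)=(m,n)$ and equals $m\,E[I_{m,n}(f)\overline{I_{m,n}(g)}]$ otherwise, and then invoke the completeness of the chaos decomposition (Theorem~\ref{chaos}(ii)) to identify $\delta D I_{m,n}(f)=m\,I_{m,n}(f)$. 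Either repair the conjugation in the product rule or complete this duality argument; in its current form the proof has a genuine gap.
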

\begin{proof} By the polarization technique, one need only to show that (\ref{l i mn}) holds in case of $f=h^{\otimes m}\otimes \bar{h}^{\otimes n}$.
Let $G=I_{m-1,n}( h^{\otimes m-1}\otimes \bar{h}^{\otimes n})$ and $\rho=\norm{h}^2$.  One has that 
\begin{align*}
   \tensor{L}(I_{m,n}(f) )&=m \delta (G h )\\
   &=m [GZ(h)-\innp{h,\,{D}\bar {G}}_{\FH}]\\
   &=m[Z(h) J_{m-1,n}(Z(h),\rho)-n\rho J_{m-1,n-1}(Z(h),\rho)]\\
   &=m J_{m,n}(Z(h),\rho)\\
   &=mI_{m,n}(f).
\end{align*}
Similarly, one has that $\bar{\tensor{L}}(I_{m,n}(f) )=n I_{m,n}(f)$.
\end{proof}

\begin{definition}\it 
Fix a $\theta\in (-\frac{\pi}{2},\,\frac{\pi}{2})$.
The OU semigroup is the one-parameter semigroup $\set{T_t:\,t\ge 0}$ of contraction operators on $L^2(\Omega,\sigma(Z),P)$ defined by 
\begin{align*}
T_t (F) = \sum_{m,n=0}^{\infty} e^{-[(m+n)\cos \theta+\mi (m-n)\sin \theta]t}I_{m,n}(f_{m,n}),
\end{align*}
where $F$ is given by $F=\sum_{m=0}^\infty\sum_{n=0}^\infty I_{m,n}(f_{m,n})$ with $f_{m,n}\in \FH^{\odot {m}}\otimes \FH^{\odot n}$.
\end{definition}
It is clear that the infinitesimal generator of the semigroup $\set{T_t}$ is given by 
\begin{align*}
L_{\theta}=-(e^{\mi\theta}\tensor{ L}+e^{-\mi\theta}\bar{\tensor {L}}).
\end{align*}
The Mehler's formula is as follows \cite{ch 15}.
\begin{prop}
Let $r=e^{\mi\theta}$ and $Z'=\set{Z'(h):\,h\in \FH}$ be an independent copy of $Z$. Then, for any $F\in L^2(\Omega)$,
\begin{align*}
T_t(F)(Z)=\E_{Z'}[F(e^{-rt}Z+\sqrt{1-e^{-2t \cos\theta }} Z') ],\qquad t\ge 0.
\end{align*}
\end{prop}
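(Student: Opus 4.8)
The plan is to use that both sides of the claimed identity are bounded linear operators on $L^2(\Omega,\sigma(Z),P)$, so they agree everywhere as soon as they agree on a dense subspace. The operator $T_t$ is a contraction by construction; setting $c_t:=\sqrt{1-e^{-2t\cos\theta}}$, which is real and well defined because $\cos\theta>0$ for $\theta\in(-\frac{\pi}{2},\frac{\pi}{2})$, the operator $\mathcal{M}_tF:=\E_{Z'}\big[F(e^{-rt}Z+c_tZ')\big]$ is also a contraction: by conditional Jensen and the fact --- verified in the next step --- that $e^{-rt}Z+c_tZ'$ has the same law as $Z$, one gets $\norm{\mathcal{M}_tF}_2^2\le\E\big[|F(e^{-rt}Z+c_tZ')|^2\big]=\norm{F}_2^2$. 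Hence it suffices to prove $\mathcal{M}_t=T_t$ on the dense subspace generated by all the chaoses $\mathscr{H}_{m,n}$ (Theorem~\ref{chaos}), and by the isometry $I_{m,n}$ together with the polarization technique it is enough to check the identity for $F=I_{m,n}(h^{\otimes m}\otimes\bar h^{\otimes n})=J_{m,n}(Z(h),\rho)$, where $\rho=\norm{h}_{\FH}^2$.

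First I would check that $W:=e^{-rt}Z+c_tZ'$ is a complex Gaussian isonormal process over $\FH$ with the same law as $Z$. It is a centered complex Gaussian family, and since $r=e^{\mi\theta}$ gives $|e^{-rt}|^2=e^{-2t\cos\theta}=1-c_t^2$, the independence of $Z$ and $Z'$ together with $\E[Z(h)^2]=\E[Z'(h)^2]=0$ and $\E[Z(g)\overline{Z(h)}]=\E[Z'(g)\overline{Z'(h)}]=\innp{g,h}_{\FH}$ yields $\E[W(h)^2]=0$ and $\E[W(g)\overline{W(h)}]=(|e^{-rt}|^2+c_t^2)\innp{g,h}_{\FH}=\innp{g,h}_{\FH}$. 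This is exactly what makes $F(W)$ well defined in $L^2$ for every $F\in L^2(\Omega,\sigma(Z),P)$, and what gives the contraction bound used above.

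The computation itself is organised through the generating function (\ref{gene}). Write $\xi=Z(h)$ and $\eta=Z'(h)$, so that $\eta$ is an independent copy of $\xi$ with $\E[\xi^2]=0$ and $\E|\xi|^2=\rho$; the elementary two-dimensional Gaussian integral gives $\E\big[\exp\set{a\bar\eta+\bar a\eta}\big]=e^{\rho|a|^2}$ for every $a\in\Cnum$. Integrating out $\eta$ first in the generating function evaluated at $W$, and using $1-c_t^2=e^{-2t\cos\theta}$, one obtains
\[
\E_{Z'}\Big[\exp\set{\lambda\,\overline{W(h)}+\bar\lambda\,W(h)-\rho|\lambda|^2}\Big]=\exp\set{\lambda'\bar\xi+\bar{\lambda'}\xi-\rho|\lambda'|^2},\qquad \lambda':=\lambda e^{-\bar rt},
\]
because $|\lambda'|^2=|\lambda|^2e^{-2t\cos\theta}=|\lambda|^2(1-c_t^2)$. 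The right-hand side is once more of the form (\ref{gene}) with parameter $\lambda'$; expanding both sides in powers of $\lambda$ and $\bar\lambda$ --- the interchange of $\E_{Z'}$ with the series being justified by the uniform-on-compacts bound $\E\,|\exp\set{\lambda\bar\xi+\bar\lambda\xi-\rho|\lambda|^2}|^2=e^{2\rho|\lambda|^2}$ --- and comparing the coefficient of $\bar\lambda^m\lambda^n/(m!\,n!)$, while noting $e^{-mrt}e^{-n\bar rt}=e^{-[(m+n)\cos\theta+\mi(m-n)\sin\theta]t}$, gives
\[
\E_{Z'}\big[J_{m,n}(W(h),\rho)\big]=e^{-[(m+n)\cos\theta+\mi(m-n)\sin\theta]t}\,J_{m,n}(Z(h),\rho).
\]
The right-hand side is precisely $T_t\big(J_{m,n}(Z(h),\rho)\big)$ by the definition of $T_t$, so $\mathcal{M}_t$ and $T_t$ agree on all such $F$, hence on $L^2$.

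I do not expect a serious obstacle here; the steps are essentially bookkeeping. The one place requiring care is keeping the complex-Gaussian normalisation straight --- that $\E[\xi^2]=0$ while $\E|\xi|^2=\rho$ --- so that the scalar identity $\E[\exp(a\bar\eta+\bar a\eta)]=e^{\rho|a|^2}$ and the cancellation $1-c_t^2=e^{-2t\cos\theta}$ come out correctly, and justifying the interchange of expectation with the generating-function series, which is controlled by the $e^{C|\lambda|^2}$ bounds on the exponential. If one prefers to avoid the generating function, the same identity follows by expanding $J_{m,n}$ via (\ref{jal}) and evaluating $\E_{Z'}$ monomial by monomial with Wick's theorem, but that route is longer.
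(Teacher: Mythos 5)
Your proof is correct. Note that the paper itself does not prove this proposition — it is quoted from the reference [Chen, \emph{Commun. Stoch. Anal.} 2015] — and your argument (check that $e^{-rt}Z+\sqrt{1-e^{-2t\cos\theta}}\,Z'$ is again a complex Gaussian isonormal process, integrate out $Z'$ in the generating function (\ref{gene}) to get $\E_{Z'}[J_{m,n}(W(h),\rho)]=e^{-[(m+n)\cos\theta+\mi(m-n)\sin\theta]t}J_{m,n}(Z(h),\rho)$, then extend by linearity, density of the span of the $J_{m,n}(Z(h))$ from Theorem~\ref{chaos}(i), and the contraction property of both operators) is exactly the standard Mehler-formula argument for this nonsymmetric semigroup. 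The only place worth tightening is the coefficient identification: rather than a ``uniform-on-compacts'' bound, it is cleanest to observe that the series $\sum_{m,n}\frac{\bar\lambda^m\lambda^n}{m!n!}J_{m,n}(W(h),\rho)$ converges in $L^2(\Omega)$ (since $\|J_{m,n}\|_2=\sqrt{m!n!\,\rho^{m+n}}$), so $\E_{Z'}$ may be applied term by term, and both sides are then power series in $\lambda,\bar\lambda$ whose coefficients must agree.
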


\begin{ques}\it 
Since the operator $\tensor{ L}$ is not the generator of  the semigroup $\set{T_t}$, one does not have a version of Mehler's formula for $\tensor{L}$. Thus, one can not copy Pisier's proof to obtain the following  Meyer's inequalities:
\begin{align*}
c_p\norm{DF}_p\le \norm{(-L)^{1/2}F}_p\le C_p\norm{DF}_p,\qquad \forall F\in \DR^{1,p},
\end{align*}where $p>1$ and $c_p,\,C_p$ are two positive constants. In fact, it is not  known whether the Meyer's inequalities hold or not.
\end{ques}

The OU semigroup $\set{T_t}$ verifies a property called Hypercontractivity \cite{ch 15}.
\begin{prop}
For the fixed $t \ge 0$ and $p> 1$, set $q(t) =e^{2t \cos\theta}(p-1)+1$. Then 
\begin{align*}
\norm{T_t F}_{q(t)}\le \norm{F},\qquad \forall F\in L^p(\Omega). 
\end{align*}
\end{prop}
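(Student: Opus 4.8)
The plan is to deduce the inequality from the classical (real, scalar-valued) hypercontractivity theorem of Nelson, by using Mehler's formula in two reduction steps.

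First I would reduce to the case $\theta=0$. Writing $r=e^{\mi\theta}$ one has $e^{-rt}=a\,e^{\mi\beta}$ with $a:=e^{-t\cos\theta}\in(0,1]$ and $\beta:=-t\sin\theta$, while $\sqrt{1-e^{-2t\cos\theta}}=\sqrt{1-a^2}$, so Mehler's formula reads
\begin{align*}
T_t(F)(Z)=\E_{Z'}\big[F\big(a\,e^{\mi\beta}Z+\sqrt{1-a^2}\,Z'\big)\big].
\end{align*}
The family $\{e^{-\mi\beta}Z(h):h\in\FH\}$ is again centered symmetric complex Gaussian with the same covariance, since $E[(e^{-\mi\beta}Z(h))^2]=e^{-2\mi\beta}E[Z(h)^2]=0$ and $E[e^{-\mi\beta}Z(g)\,\overline{e^{-\mi\beta}Z(h)}]=\innp{g,h}_\FH$; hence $e^{-\mi\beta}Z\overset{d}{=}Z$. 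Substituting $e^{-\mi\beta}Z$ for $Z$ inside $\norm{\cdot}_{q(t)}$ kills the phase, so $\norm{T_tF}_{q(t)}=\norm{S_sF}_{q(t)}$, where $S_s$ denotes the OU semigroup with parameter $\theta=0$ evaluated at time $s:=t\cos\theta$ (so that $e^{-s}=a$), and $\norm{F}_p$ is unchanged. Thus it suffices to treat $\theta=0$.

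Next I would identify $S_s$ with the classical real OU semigroup. Realize $Z$ over a real Gaussian space as in the construction preceding Theorem~\ref{pp2}: with $\FH=\FH_\R\oplus\mi\FH_\R$ and $W$ an isonormal Gaussian process over $\FH_\R\oplus\FH_\R$, set $Z(g+\mi k)=\tfrac1{\sqrt2}\big(W(g,-k)+\mi\,W(k,g)\big)$, which one checks has the required covariance and is $\Complex$-linear in its argument. Let $(W',Z')$ be an independent copy. Since the assignment $W\mapsto Z$ is $\R$-linear, the Ornstein--Uhlenbeck interpolation $e^{-s}W+\sqrt{1-e^{-2s}}\,W'$ is carried to $e^{-s}Z+\sqrt{1-e^{-2s}}\,Z'$; hence, for $G=F(Z)$, the ordinary real Mehler formula gives $P_sG(W)=\E_{W'}\big[F\big(e^{-s}Z(W)+\sqrt{1-e^{-2s}}\,Z(W')\big)\big]=S_sF(Z)$, where $P_s$ is the classical real OU semigroup on $L^2(\Omega,\sigma(W),P)$. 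In particular $P_s$ preserves $\sigma(Z)$-measurability and restricts, on functionals of $Z$, exactly to $S_s$.

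Finally, Nelson's theorem for $P_s$ states that $\norm{P_sG}_{q}\le\norm{G}_p$ whenever $q\le e^{2s}(p-1)+1$. Taking $G=F(Z)$ and $s=t\cos\theta$, one has $e^{2s}(p-1)+1=e^{2t\cos\theta}(p-1)+1=q(t)$, so $\norm{S_sF}_{q(t)}=\norm{P_sG}_{q(t)}\le\norm{G}_p=\norm{F}_p$; combined with the first step this gives $\norm{T_tF}_{q(t)}\le\norm{F}_p$, and a density argument based on Theorem~\ref{chaos}(i) extends it from smooth $F$ to all of $L^p(\Omega)$. The main obstacle is the bookkeeping in the identification step: one must choose the real realization of the complex isonormal process carefully enough that the complex ($\theta=0$) and the real Mehler formulas literally coincide, and verify that $\R$-linearity of $W\mapsto Z$ transports the interpolation correctly; once that identification is in hand, hypercontractivity is immediate from the real case. (Alternatively one could run Nelson's original argument directly --- finite-dimensional truncation, a two-point inequality, and tensorization --- but routing through Mehler's formula is shorter.)
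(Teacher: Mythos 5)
Your argument is correct, and it is essentially the standard proof: the paper itself gives no proof but cites \cite{ch 15}, where the result is obtained by the same reduction you use, namely factoring $T_t$ into a unimodular phase rotation (an isometry on every $L^q$ by the invariance in law of the symmetric complex Gaussian family under $Z\mapsto e^{-\mi\beta}Z$) composed with the symmetric Ornstein--Uhlenbeck semigroup at time $t\cos\theta$, which is identified with the classical real OU semigroup so that Nelson's hypercontractivity applies. Your bookkeeping (the realization $Z(g+\mi k)=\tfrac{1}{\sqrt2}\big(W(g,-k)+\mi W(k,g)\big)$, the $\R$-linearity transporting the interpolation, and the density step extending from $L^2\cap L^p$ to $L^p$) is sound, so no gap remains.
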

A direct consequence of the hypercontractivity property is the norm equivalence of the  Wiener-It\^{o} chaos \cite{ch 17}. 
\begin{prop}\label{pp201}
    \begin{itemize}
        \item[\textup{(1)}] Complex multiple Wiener-It\^{o} integrals have all moments satisfying the following hypercontractivity inequality
        \begin{equation}\label{hypercontract}
         [E\abs{I_{p,q}(f)}^{r}]^{\frac{1}{r}}\le (r-1)^{\frac{p+q}{2}} [E\abs{I_{p,q}(f)}^{2}]^{\frac{1}{2}},\quad r\ge 2,
        \end{equation}
         where $\abs{\cdot}$ is the absolute value (or modulus) of a complex number.
        \item[\textup{(2)}] If a sequence of distributions of $\set{I_{p,q}(f_n)}_{n\ge 1}$ is tight, then
        \begin{equation}\label{tight compact}
          \sup_{n} E\abs{I_{p,q}(f_n)}^{r}<\infty \quad\text {for every $r>0$.}
        \end{equation}
    \end{itemize}
\end{prop}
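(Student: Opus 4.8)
\emph{Proof sketch.} The plan for part (1) is to read the inequality off from the hypercontractivity of the Ornstein--Uhlenbeck semigroup $\set{T_t}$ by specializing the rotation parameter to $\theta=0$. With $\theta=0$ one has $\cos\theta=1$, and combining Proposition~\ref{prop ll imn 2} with the definition of $\set{T_t}$ gives $T_t\big(I_{p,q}(f)\big)=e^{-(p+q)t}I_{p,q}(f)$ for every $t\ge0$. Feeding $F=I_{p,q}(f)$ into the hypercontractivity bound with starting exponent $2$ (so that the target exponent is $q(t)=e^{2t}+1$) yields
\begin{align*}
 e^{-(p+q)t}\,\norm{I_{p,q}(f)}_{e^{2t}+1}\le \norm{I_{p,q}(f)}_2 ,\qquad t\ge0.
\end{align*}
Given $r\ge2$, I choose $t\ge0$ with $e^{2t}=r-1$, i.e. the target exponent equals $r$; then $e^{-(p+q)t}=(r-1)^{-(p+q)/2}$, and the displayed inequality becomes exactly (\ref{hypercontract}). (One could instead deduce the same bound from Theorem~\ref{pp2} and the classical real hypercontractivity applied to the real and imaginary parts of $I_{p,q}(f)$, but the route through $\set{T_t}$ is cleaner and gives the sharp constant with no extra bookkeeping.)

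For part (2) I first reduce to exponents $r\ge2$: once $\sup_n E\abs{I_{p,q}(f_n)}^r<\infty$ is known for all $r\ge2$, Lyapunov's (or H\"older's) inequality supplies it for $0<r<2$ as well. By part (1), inequality (\ref{hypercontract}) promotes any uniform $L^2$ bound to a uniform $L^r$ bound for each $r\ge2$, with constant $(r-1)^{(p+q)/2}$; hence it suffices to show that tightness of $\set{I_{p,q}(f_n)}$ forces $M:=\sup_n\norm{I_{p,q}(f_n)}_2<\infty$.

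I prove this by contradiction. If $\norm{I_{p,q}(f_n)}_2\to\infty$ along a subsequence (which I relabel), set $G_n=I_{p,q}(f_n)/\norm{I_{p,q}(f_n)}_2=I_{p,q}\big(f_n/\norm{I_{p,q}(f_n)}_2\big)$, so that $\norm{G_n}_2=1$ and, by part (1) with $r=4$, $\norm{G_n}_4\le 3^{(p+q)/2}=:C$. Splitting and using Cauchy--Schwarz,
\begin{align*}
 1=E\big[\abs{G_n}^2\one_{\{\abs{G_n}\le 1/2\}}\big]+E\big[\abs{G_n}^2\one_{\{\abs{G_n}> 1/2\}}\big]\le \tfrac14+C^2\sqrt{P(\abs{G_n}>1/2)},
\end{align*}
so $P(\abs{G_n}>1/2)\ge 9/(16C^4)=:\delta>0$ uniformly in $n$. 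Consequently, for every $M>0$ and all large $n$ one has $P\big(\abs{I_{p,q}(f_n)}>M\big)\ge P\big(\abs{G_n}>1/2\big)\ge\delta$, which contradicts tightness. Hence $M<\infty$, and (\ref{tight compact}) follows.

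The only step that is not a routine substitution is this last passage from tightness to $L^2$-boundedness; its engine is the observation that hypercontractivity inside the fixed chaos $\mathscr{H}_{p,q}$ makes the normalized integrals bounded in $L^4$, hence uniformly bounded away from $0$ in probability --- the complex analogue of the familiar equivalence-of-topologies phenomenon for real Wiener chaos. Everything else is plugging $I_{p,q}(f)$ into the semigroup hypercontractivity recorded above.
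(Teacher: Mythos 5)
Your argument is correct and follows exactly the route the paper indicates: part (1) is read off from the hypercontractivity of the semigroup $\set{T_t}$ (with $\theta=0$, so that $T_t I_{p,q}(f)=e^{-(p+q)t}I_{p,q}(f)$ and $q(t)=e^{2t}+1$), which is precisely how the paper presents Proposition~\ref{pp201} as a direct consequence of the preceding hypercontractivity proposition (details being delegated to the cited reference). Your part (2) — using the $L^2$--$L^4$ bound from (1) and a Paley--Zygmund-type estimate on the normalized integrals to show tightness forces $\sup_n\norm{I_{p,q}(f_n)}_2<\infty$, then upgrading via (1) and H\"older — is the standard argument and is sound.
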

The inequality (\ref{hypercontract}) implies that the exponential integrability of $I_{p,q}(f)$. For the real case, please refer to \cite{hyz 17}.
\begin{corollary}
For the multiple Wiener-It\^{o} integral $I_{p,q}(f)$,
there exist a constant $c_0>0$ such that for all $c<c_0$,
\begin{align*}
\E\exp\set{ c_0 \abs{I_{p,q}(f)}^{\frac{2}{p+q}}}<\infty.
\end{align*}
\end{corollary}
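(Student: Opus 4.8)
The plan is to expand $\exp\set{c\abs{I_{p,q}(f)}^{2/(p+q)}}$ in its Taylor series, estimate each moment $\E\abs{I_{p,q}(f)}^{2n/(p+q)}$ by the hypercontractivity inequality \eqref{hypercontract}, and then sum the resulting series with the help of Stirling's estimate. Write $k=p+q$, $X=\abs{I_{p,q}(f)}$ and $\sigma=(\E X^2)^{1/2}$; if $\sigma=0$ then $I_{p,q}(f)=0$ a.s.\ and the claim is trivial, so assume $\sigma>0$. For every integer $n\ge k$ we may apply \eqref{hypercontract} with $r=2n/k\ge 2$ to get
\begin{align*}
\E X^{2n/k}=\norm{X}_{2n/k}^{2n/k}\le \Big((2n/k-1)^{k/2}\sigma\Big)^{2n/k}\le \Big(\frac{2n}{k}\Big)^{n}\sigma^{2n/k}.
\end{align*}
For the finitely many indices $0\le n<k$ one has $2n/k<2$, so Lyapunov's inequality gives $\E X^{2n/k}\le (\E X^2)^{n/k}=\sigma^{2n/k}<\infty$.

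Fix $c>0$. Since all terms are nonnegative, the monotone convergence theorem yields
\begin{align*}
\E\exp\set{c X^{2/k}}=\sum_{n=0}^{\infty}\frac{c^{n}}{n!}\,\E X^{2n/k}\le \sum_{n=0}^{k-1}\frac{c^{n}}{n!}\sigma^{2n/k}+\sum_{n=k}^{\infty}\frac{c^{n}}{n!}\Big(\frac{2n}{k}\Big)^{n}\sigma^{2n/k}.
\end{align*}
The first sum is finite. For the second, Stirling's inequality $n!\ge (n/e)^{n}$ gives $\frac{1}{n!}(2n/k)^{n}\le (2e/k)^{n}$, hence
\begin{align*}
\sum_{n=k}^{\infty}\frac{c^{n}}{n!}\Big(\frac{2n}{k}\Big)^{n}\sigma^{2n/k}\le \sum_{n=k}^{\infty}\Big(\frac{2ec\,\sigma^{2/k}}{k}\Big)^{n},
\end{align*}
which is a convergent geometric series provided $c<c_0:=\dfrac{k}{2e\,\sigma^{2/k}}=\dfrac{p+q}{2e\,(\E\abs{I_{p,q}(f)}^{2})^{1/(p+q)}}$. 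Thus $\E\exp\set{c\abs{I_{p,q}(f)}^{2/(p+q)}}<\infty$ for all $c<c_0$, which is the asserted exponential integrability.

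There is no genuine obstacle here. The only points that need a little attention are that \eqref{hypercontract} is available only for exponents $r\ge 2$, which forces a separate (harmless, by Lyapunov) treatment of the indices $n<p+q$, and that the exponent $2/(p+q)$ is precisely the one for which the factor $n^{n}$ arising from $\big(2n/(p+q)\big)^{n}$ is absorbed by the $n!$ in the Taylor expansion; a larger exponent would break the convergence of the series.
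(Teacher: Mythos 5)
Your proof is correct and follows exactly the route the paper intends: the corollary is stated as a consequence of the hypercontractivity inequality \eqref{hypercontract}, with the standard argument (Taylor expansion of the exponential, moment bounds from hypercontractivity for $r\ge 2$, Stirling's estimate to sum the series) left to the cited real-case reference, and your write-up supplies precisely that argument, including the harmless treatment of the low moments via Lyapunov's inequality.
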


\subsection{Several expansions of the fourth moment of a multiple Wiener-It\^o integral}
The moment method is a crucial tool of probability. The formulae of the moments of real multiple Wiener-It\^o  integrals show some interesting combinatorial structures (see \cite{pt}).   However, it seems that the formulae of the moment of complex multiple Wiener-It\^o  integrals is much more complicated than the real one. 

The first expansion of the fourth moment of a complex multiple Wiener-It\^o integral is in terms of Malliavin derivatives. It offers a starting point for this topic.

\begin{prop}Suppose that $F=I_{m,n}(f)$ with $f\in \FH^{\odot m}\otimes \FH^{\odot n}$,  then one has that
\begin{equation}\label{fourth moment} 
 \E[\abs{F}^4]=\frac{1}{m}\big[ 2 \norm{DF}_{\FH}^2\times\abs{F}^2 +\innp{DF,\,D\bar{F}}_{\FH} \times\bar{F}^2\big].
\end{equation}
\end{prop}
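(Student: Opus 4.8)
The plan is to reduce to the case $f = h^{\otimes m}\otimes\bar h^{\otimes n}$ with $\rho=\norm{h}^2$ by the polarization technique, so that $F = I_{m,n}(f) = J_{m,n}(Z(h),\rho)$, and then to compute everything in terms of the single complex Gaussian $Z(h)$ using the algebraic identities for complex Hermite polynomials collected in the Appendix. By Proposition~\ref{prop d imn} one has $DF = m\,I_{m-1,n}(h^{\otimes m-1}\otimes\bar h^{\otimes n})\,h = m\,J_{m-1,n}(Z(h),\rho)\,h$ and, since $\bar F = I_{n,m}(\bar h^{\otimes n}\otimes h^{\otimes m}) = J_{n,m}(\overline{Z(h)},\rho)$ (using the symmetry $J_{m,n}(z,\rho)=\overline{J_{n,m}(\bar z,\rho)}$), one gets $D\bar F = n\,J_{n-1,m}(\overline{Z(h)},\rho)\,h = n\,\overline{J_{m,n-1}(Z(h),\rho)}\,h$. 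Hence $\norm{DF}_{\FH}^2 = m^2\rho\,\abs{J_{m-1,n}(Z(h),\rho)}^2$ and $\innp{DF,D\bar F}_{\FH} = mn\rho\,J_{m-1,n}(Z(h),\rho)\,J_{m,n-1}(Z(h),\rho)$, while $\abs F^2 = \abs{J_{m,n}(Z(h),\rho)}^2$ and $\bar F^2 = \overline{J_{m,n}(Z(h),\rho)}^{\,2}$. So the claimed identity becomes a pointwise (in $Z(h)$) algebraic identity among products of complex Hermite polynomials, multiplied by $\rho$ and combinatorial factors, once we take expectations.

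The key step is therefore a recurrence/contiguity relation for the $J_{m,n}$. Writing $z = Z(h)$, I would use the standard relations $\partial_z J_{m,n}(z,\rho) = n\,J_{m,n-1}(z,\rho)$, $\partial_{\bar z} J_{m,n}(z,\rho) = m\,J_{m-1,n}(z,\rho)$ together with the three-term recurrences $z\,J_{m-1,n}(z,\rho) = J_{m,n}(z,\rho) + n\rho\,J_{m-1,n-1}(z,\rho)$ and $\bar z\,J_{m,n-1}(z,\rho) = J_{m,n}(z,\rho) + m\rho\,J_{m-1,n-1}(z,\rho)$ (these appear, in the normalization used here, in the Appendix and were already invoked in the proof of Proposition~\ref{prop ll imn 2}). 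Combining these lets me express $z\,J_{m-1,n}$ and $\bar z\,J_{m,n-1}$ in terms of $J_{m,n}$ and $J_{m-1,n-1}$. The plan is then to expand the bracket $2\norm{DF}^2\abs F^2 + \innp{DF,D\bar F}\bar F^2$, substitute, and recognize that after multiplying by $\rho$ and dividing by $m$ it collapses to $\abs{J_{m,n}(z,\rho)}^2\cdot\abs{J_{m,n}(z,\rho)}^2 = \abs F^4$ in expectation — more precisely, the point is that $\E[\abs{F}^4]$ can be computed from the product formula (Theorem~\ref{pdt fml}), giving $\E[\abs F^4] = \abs{I_{m,n}(f)}^2$-type contractions, and I match the two sides term by term.

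A cleaner route, which I would actually follow, avoids manipulating $J_{m,n}$ directly: apply the product formula (\ref{ii}) with $a=b'=\ldots$ to expand $F\bar F = I_{m,n}(f)I_{n,m}(h) = \sum_{i,j} c_{i,j} I_{\cdots}(f\otimes_{i,j}h)$, then expand $\abs F^4 = (F\bar F)(F\bar F)$ again and take expectations, keeping only the $I_{0,0}$ (scalar) terms; on the other side, expand $\norm{DF}_{\FH}^2\abs F^2$ and $\innp{DF,D\bar F}_{\FH}\bar F^2$ the same way using Proposition~\ref{prop d imn} and the product formula, and check the combinatorial identity between the resulting sums of contraction norms. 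The main obstacle, and the only real work, is this bookkeeping: verifying that the binomial coefficients $\binom{m}{i}\binom{n}{j}\binom{m}{k}\binom{n}{l}i!j!k!l!$ arising from the double application of the product formula to $\abs F^4$ reorganize, after the factor $1/m$, into exactly the coefficients produced by differentiating once (the factor $m$ in $D_t I_{m,n}$) and then forming the two quadratic expressions. I expect this to reduce, by the polarization step, to a finite identity in $m$, $n$, $\rho$ and the contraction norms $\norm{h^{\otimes(m-r)}\otimes\bar h^{\otimes(n-s)}}^2 = \rho^{m+n-r-s}$, which can be checked directly.
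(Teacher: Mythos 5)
There is a genuine gap, and it is structural rather than computational. Both of your routes begin (and the second one also ends) with a reduction ``by the polarization technique'' to kernels of the special form $f=h^{\otimes m}\otimes\bar h^{\otimes n}$. That reduction is legitimate for statements that are \emph{linear} in $f$ (this is how it is used in Propositions~\ref{prop d imn} and~\ref{prop ll imn 2}: a linear identity that holds on a total set of kernels and is continuous extends to all of $\FH^{\odot m}\otimes\FH^{\odot n}$). But identity (\ref{fourth moment}) is quartic in $(f,\bar f)$ on both sides: knowing it for each rank-one kernel separately says nothing about $f=\sum_k c_k\,h_k^{\otimes m}\otimes\bar h_k^{\otimes n}$, because neither side is additive in $f$; to recover the general case you would have to treat arbitrary finite linear combinations, i.e.\ expectations of products of four complex Hermite polynomials in \emph{correlated} complex Gaussians, which is essentially the full problem again. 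For the same reason the phrase ``a pointwise algebraic identity \ldots once we take expectations'' does not parse into a proof: the statement is an identity of expectations only (indeed the right-hand side is to be read inside $\E[\cdot]$), so there is no pointwise polynomial identity to check, and the expectation of a fourfold product is not obtained by inspection. Your ``cleaner route'' (expand $\abs F^4$, $\norm{DF}^2\abs F^2$ and $\innp{DF,D\bar F}\bar F^2$ by the product formula of Theorem~\ref{pdt fml} and match contraction coefficients) is in principle viable for general $f$, but the decisive combinatorial matching is exactly the content of the proposition and is only asserted (``I expect this to reduce\ldots''), not carried out; as written, the proposal therefore does not contain a proof.

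You also miss the short argument that the paper's own toolkit makes available, and which avoids all of this bookkeeping. Since $\tensor L F=\delta DF=mF$ by Proposition~\ref{prop ll imn 2}, the duality formula (\ref{dul form}) gives
\begin{equation*}
\E[\abs F^4]=\E\big[F\,\overline{\bar F F^2}\big]=\tfrac1m\,\E\big[(\delta DF)\,\overline{\bar F F^2}\big]=\tfrac1m\,\E\big[\innp{DF,\,D(\bar F F^2)}_{\FH}\big],
\end{equation*}
and the chain rule yields $D(\bar F F^2)=2\abs F^2\,DF+F^2\,D\bar F$, which is precisely the bracket in (\ref{fourth moment}). The only technical point is that Proposition~\ref{prop312} assumes bounded partial derivatives, so one applies it to smooth truncations $g_n$ of $g(z)=\bar z z^2$ and passes to the limit using hypercontractivity (to control moments of $F$), Cauchy--Schwarz, and the weak-convergence criterion of Proposition~\ref{jprop}. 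If you want to salvage your proposal, either adopt this duality-plus-chain-rule argument, or genuinely carry out the product-formula expansion of all three expectations for a general kernel $f$ (not a rank-one one) and exhibit the coefficient identity.
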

\begin{proof}
Step 1: By approximation, one claims that for any Wiener-Ito integral $F=I_{m,n}(f)$,
\begin{align}\label{qiu dao}
  D(\bar{F}F^2)=2\abs{F}^2 DF+F^2 D\bar{F}.
\end{align}
In fact, for the function $g(z)=\bar{z}z^2$ and $n\in \Nnum$, we take $$g_n=g\cdot \big[\chi_{[-n,n]}+k(-n-x)+k(n+x)\big]$$
 where $\chi_A(\cdot)$ the index function of a set $A$ and $k(x)=e^{-\frac{1}{x(1-x)}}\chi_{(0,1)}(x)$ a cut-off function. For any $p\ge 1$, $g_n\in C^{\infty}_c(\Rnum^2)$ and $g_n,\partial g_n,\,{\partial}\bar{g}_n$ converge to $g,\,\partial g,\,{\partial}\bar{g}$ respectively in the sense of $L^p(\mu)$ with $F\sim \mu$. The chain rule, i.e, Proposition~\ref{prop312}, implies that
 \begin{align*}
   D(g_n(F))=\partial g_n(F) DF+\bar{\partial} g_n(F) D\bar{F}.
 \end{align*} The hypercontrativity inequality of Wiener-Ito integrals (see Proposition~2.4 of \cite{ch 17}) and the Cauchy-Schwarz inequality imply that as $n\to \infty$, in the sense of $L^2(\Omega,\,\FH)$,
 \begin{align*}
   \partial g_n(F) DF+\bar{\partial} g_n(F) D\bar{F} \to 2\abs{F}^2 DF+F^2 D\bar{F}.
 \end{align*}
Then we obtain (\ref{qiu dao}) by Proposition~\ref{jprop}.

Step 2: It follows from Proposition~\ref{prop ll imn 2}, the dual relation and the chain rule that
\begin{align*}
  \E[\abs{F}^4]&=\E[ {F}\overline{\bar{F}F^2}]\\
  &=\frac{1}{m}\E[ \delta D {F}\times \overline{\bar{F}F^2}]\\
  &=\frac{1}{m}\E[ \innp{ D {F},\, D(\bar{F}F^2)}_{\FH}]\\
  &=\frac{1}{m}\big[ 2 \norm{DF}_{\FH}^2\times\abs{F}^2 +\innp{DF,\,D\bar{F}}_{\FH} \times\bar{F}^2\big].
\end{align*}
\end{proof}
The other three expansions of the fourth moment of a multiple Wiener-It\^o integral are in terms of the contraction of kernels of multiple Wiener-It\^o integrals.
\begin{prop}\label{prop 3-2}
Suppose that $F=I_{m,n}(f)$ with $f\in \FH^{\odot m}\otimes \FH^{\odot n}$ and that $\bar{F}=I_{n,m}(h)$ where $h$ is the reversed complex conjugate of $f$. Denote  $l=m+n,\,l'=2(m\wedge n)$. Then
\begin{align}
   & \E[\abs{F}^4]-2\big(\E[\abs{F}^2]\big)^2 -\abs{E[{F}^2]}^2 \nonumber\\
      &=\sum_{0 <i+j<l'}{m\choose i}{n\choose i}{n\choose j}{m\choose j}  (m!n!)^2  \norm{f\otimes_{i,j}f}^2_{\FH^{\otimes(2(l'-i-j))}} +\sum_{r=1}^{l-1}((l-r)!)^2\norm{\psi_r}^2_{\FH^{\otimes(2(l-r))}} \label{df s1}\\
      &=\sum_{0<i+j<l}{m\choose i}^2{n\choose j}^2(m!n!)^2 \norm{f\otimes_{i,j}h}^2_{\FH^{\otimes(2(l-r))}}  +\sum_{r= 1}^{l'-1} (2m-r)!(2n-r)!  \norm{\varphi_r}^2_{\FH^{\otimes 2(l'-r)}},\label{df s2}\\
      &=2\sum_{r=1}^{l-1}\big[(l-r)!\big]^2
   \innp{\vartheta_r,\psi_r}_{\FH^{\otimes 2(l-r)}}+ \sum_{r=1}^{l'-1}(2m-r)!(2n-r)!\innp{\varsigma_r,\, \varphi_r}_{\FH^{\otimes 2(l'-r)}}, \label{df ff1}
\end{align}
where 
\begin{align*}
   \vartheta_r
     &= \sum_{i+j=r}\frac{i}{m} {m \choose i }^2{n\choose j}^2 i!j!\,f\tilde{\otimes}_{i,j}h,\\
\psi_r&=\sum_{i+j=r} {m\choose i}^2{n\choose j}^2 i!j!\, f\tilde{\otimes}_{i,j} h,  \\
    \varsigma_r&= \sum_{i+j=r}\frac{i}{m} {m \choose i }{n\choose i}{m \choose j }{n\choose j} i!j!\,f\tilde{\otimes}_{i,j}f,\\
   \varphi_r&=\sum_{i+j=r} {m\choose i}{n\choose i}{n\choose j}{m\choose j} i!j!\, f\tilde{\otimes}_{i,j} f.
\end{align*}
\end{prop}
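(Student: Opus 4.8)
The strategy is to compute $\E[|F|^4]$ via the product formula (Theorem~\ref{pdt fml}) applied in two different orderings, and to match the result against the explicit values of $\E[|F|^2]^2$ and $|\E[F^2]|^2$. Write $F = I_{m,n}(f)$ and $\bar F = I_{n,m}(h)$ with $h$ the reversed complex conjugate of $f$. Then $|F|^4 = (F\bar F)(F\bar F) = |F|^2 \cdot |F|^2$, and $\E[|F|^4]$ is the $(0,0)$-chaos component (i.e.\ the expectation) of this product. I would expand $F\bar F = I_{m,n}(f) I_{n,m}(h)$ once by the product formula, obtaining a sum over contractions $f \otimes_{i,j} h$ of multiple integrals $I_{m+n-i-j,\,m+n-i-j}(f\otimes_{i,j}h)$; symmetrizing these kernels and regrouping by total contraction order $r = i+j$ gives exactly the kernels $\psi_r$ (the full symmetrized sum) appearing in the statement. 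Squaring and taking expectation then uses the isometry $\E[|I_{p,q}(\phi)|^2] = p!q!\,\|\tilde\phi\|^2$ together with orthogonality of chaoses of distinct bidegree; this produces the $\sum_r ((l-r)!)^2 \|\psi_r\|^2$ term in~(\ref{df s1}). The remaining off-diagonal contributions, after subtracting $2\E[|F|^2]^2 + |\E[F^2]|^2$ (which correspond to the ``uncontracted'' or ``maximally paired'' terms), reorganize into $\sum_{0<i+j<l'} \binom{m}{i}\binom{n}{i}\binom{n}{j}\binom{m}{j}(m!n!)^2 \|f\otimes_{i,j}f\|^2$ — here one uses $F^2 = I_{m,n}(f)^2$ expanded by the product formula in the \emph{other} pairing (multiplying $f$ with $f$ rather than $f$ with $h$), whose kernels are the $\varphi_r$.

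For~(\ref{df s2}), I would repeat the computation but interchange the roles: expand $|F|^2 = I_{m,n}(f)I_{n,m}(h)$ keeping the $f\otimes_{i,j}h$ contractions as the ``diagonal'' part and the $f\otimes_{i,j}f$ contractions (coming from $F\cdot F$ and $\bar F\cdot\bar F$) as the ``cross'' part; this is the mirror-image bookkeeping of~(\ref{df s1}) and yields the $\sum_r (2m-r)!(2n-r)!\|\varphi_r\|^2$ term together with $\sum_{0<i+j<l}\binom{m}{i}^2\binom{n}{j}^2 (m!n!)^2 \|f\otimes_{i,j}h\|^2$. Finally, identity~(\ref{df ff1}) is obtained not by a fresh computation but by \emph{averaging}~(\ref{df s1}) and~(\ref{df s2}): the point is that the left-hand sides are identical, so the two right-hand sides are equal, and taking their average (or a suitable convex combination) collapses the squared norms $\|\psi_r\|^2, \|\varphi_r\|^2$ into the inner products $\innp{\vartheta_r,\psi_r}$ and $\innp{\varsigma_r,\varphi_r}$. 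The factor $i/m$ in the definitions of $\vartheta_r$ and $\varsigma_r$ is precisely what one gets from the combinatorial identity relating the two expansions — it is the signature of the Malliavin-derivative route, since $D I_{m,n}(f) = m I_{m-1,n}(f(t,\cdot))$ introduces the factor $m$ in the denominator, exactly as in the first fourth-moment proposition (equation~(\ref{fourth moment})). Concretely, I would apply the operator identity $\E[|F|^4] = \frac1m\E[\innp{DF, D(\bar F F^2)}]$ established there, expand each of $DF$, $D\bar F$, $D F^2$ as multiple integrals via Proposition~\ref{prop d imn} and the product formula, and read off~(\ref{df ff1}).

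\textbf{The main obstacle.} The conceptual steps are standard, but the bookkeeping is heavy: one must carefully track \emph{four} separate index pairs (the $(i,j)$ from the first product-formula application and a second pair from squaring), symmetrize kernels correctly in It\^o's sense, and verify that the binomial/factorial prefactors telescope exactly as claimed. The genuinely delicate point is the identification of the ``diagonal'' terms — showing that $2\E[|F|^2]^2$ accounts for precisely the two pairings in which each copy of $f$ is fully contracted against a copy of $h$ (using $\E[|F|^2] = m!n!\|f\|^2$), while $|\E[F^2]|^2$ accounts for the pairing $f$ against $f$ — so that the range of summation in each sum becomes $0 < i+j < l'$ or $0 < i+j < l$ rather than $0\le i+j\le l'$. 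A secondary subtlety is that the contraction $f\otimes_{i,j}f$ lives in $\FH^{\otimes(2(l-i-j))}$ and need not be symmetric, so one must be consistent about whether norms are taken of $f\otimes_{i,j}f$ or of its symmetrization $f\tilde\otimes_{i,j}f$; the statement uses the symmetrized versions inside $\psi_r,\varphi_r,\vartheta_r,\varsigma_r$ but the \emph{un}symmetrized contraction inside the explicit $\|f\otimes_{i,j}f\|^2$ sums, and reconciling these via $\|\tilde g\| \le \|g\|$ (with equality failing in general) requires the regrouping into the $\psi_r,\varphi_r$ before squaring — this is why the two ``layers'' of sums in each of~(\ref{df s1})--(\ref{df s2}) cannot be merged. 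I expect roughly half the work to be a careful, essentially mechanical expansion, and the other half to be the algebraic verification that the cross-terms reassemble into the stated inner-product form in~(\ref{df ff1}).
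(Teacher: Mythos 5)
Your plan for (\ref{df s1}) and (\ref{df s2}) (expand $\abs{F}^2=I_{m,n}(f)I_{n,m}(h)$ resp.\ $F^2=I_{m,n}(f)^2$ by the product formula, then use the isometry and orthogonality) is viable in outline, although the paper does not reprove these two identities at all — it quotes them from \cite{chw 17} and only proves (\ref{df ff1}). The genuine gap is in your derivation of (\ref{df ff1}). Averaging (\ref{df s1}) and (\ref{df s2}) cannot produce it: their right-hand sides are equal numbers, so any convex combination only reproduces quantities already present, namely $\norm{\psi_r}^2$, $\norm{\varphi_r}^2$ and the unsymmetrized contraction norms. The kernels $\vartheta_r$ and $\varsigma_r$, which carry the extra weight $i/m$ inside the sum over $i+j=r$, occur in neither identity, and no algebraic manipulation of two identical scalars can create the inner products $\innp{\vartheta_r,\,\psi_r}$ and $\innp{\varsigma_r,\,\varphi_r}$. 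So the step ``taking their average collapses the squared norms into the inner products'' is false, and since you present it as the main mechanism (``not by a fresh computation''), this is a real gap — indeed it contradicts your own closing sentence, which proposes a fresh computation.

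That closing sentence is the correct route, and it is exactly the paper's proof: starting from (\ref{fourth moment}), i.e. $\E\abs{F}^4=\frac1m\E\big[\innp{DF,\,D(\bar F F^2)}_{\FH}\big]$, one computes $\frac1m\E\big[\norm{DF}^2_{\FH}\abs{F}^2\big]$ and $\frac1m\E\big[\innp{DF,\,D\bar F}_{\FH}\bar F^2\big]$ separately. For the first, $\frac1m\norm{DF}^2_{\FH}=m\norm{I_{m-1,n}(f)}^2_{\FH}$ is expanded by the product formula (Theorem~\ref{pdt fml} together with Proposition~\ref{prop d imn}); after the shift $i\mapsto i+1$ in the contraction index this gives $\E[\abs{F}^2]+\sum_{r\ge1} I_{l-r,l-r}(\vartheta_r)$ — this reindexing, not a ``combinatorial identity relating the two expansions,'' is where the factor $i/m$ comes from. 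Pairing this chaos expansion against $\abs{F}^2=\E[\abs{F}^2]+\sum_{r\ge0} I_{l-r,l-r}(\psi_r)$ via orthogonality of chaoses of equal bidegree yields $(\E\abs{F}^2)^2+\sum_{r=1}^{l-1}[(l-r)!]^2\innp{\vartheta_r,\,\psi_r}$; the second expectation is handled identically with $n\innp{I_{m-1,n}(f),\,I_{n-1,m}(h)}_{\FH}$ paired against the expansion of $F^2$ with kernels $\varphi_r$, giving $\abs{\E F^2}^2+\sum_{r=1}^{l'-1}(2m-r)!(2n-r)!\innp{\varsigma_r,\,\varphi_r}$. Substituting both into (\ref{fourth moment}) gives (\ref{df ff1}). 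If you replace the averaging argument by this computation, your proposal coincides with the paper's argument; as written, the mechanism you rely on does not work.
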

\begin{proof} One need only to show the third identity since the first and the second are shown in \cite{chw 17}. Its proof is divided into two steps.

  Step 1: One claims that
  \begin{align}
   \frac{1}{m}\E\big[\abs{F}^2\norm{D F}^2_{\FH}\big]=\big(\E[\abs{F}^2]\big)^2+\sum_{r=1}^{m+n-1}\big[(m+n-r)!\big]^2
   \innp{\vartheta_r,\psi_r}_{\FH^{\otimes 2(m+n-r)}} .\label{first part}
  \end{align}
  In fact, it follows from the product formula and the Fubini theorem that
  \begin{align}
    \frac{1}{m}\norm{D F}^2_{\FH}&=m \norm{I_{m-1,n}(f)}^2_{\FH}\nonumber \\
    &=m \sum_{i=0}^{m-1}\sum^{n}_{j=0}\,{m-1\choose i}^2{n\choose j}^2 i!j!\,I_{m+n-1-i-j,m+n-1-i-j}(f\otimes_{i+1,j}h)\nonumber \\
    &=m \sum_{i=1}^{m}\sum^{n}_{j=0}\,{m-1\choose i-1}^2{n\choose j}^2 (i-1)!j!\,I_{m+n-i-j,m+n-i-j}(f\otimes_{i,j}h)\nonumber \\
    &=\E[\abs{F}^2]+\sum_{r=1}^{m+n-1}I_{m+n-r,m+n-r}(\vartheta_r).\label{df square1}
  \end{align}
  On the other hand, one can obtain that
  \begin{align}\label{f squre}
     \abs{F}^2&=\sum_{i=0}^{m}\sum^{n}_{j=0}\,{m\choose i}^2{n\choose j}^2 i!j!\,I_{m+n-i-j,m+n-i-j}(f\otimes_{i,j}h)\nonumber\\
     &=\E[\abs{F}^2]+\sum_{r=0}^{m+n-1}I_{m+n-r,m+n-r}(\psi_r).
  \end{align}
  Substituting (\ref{df square1}) and (\ref{f squre}) into the left side of (\ref{first part}) and using the orthogonality properties of multiple integrals, one has that (\ref{first part}) holds.

  Step 2: One claims that
  \begin{align}\label{second part}
    \frac{1}{m}\E\big[\innp{D F,\,D\bar{F}}_{\FH}\bar{F}^2\big]= \abs{\E[{F}^2]}^2+\sum_{r=1}^{2(m\wedge n)-1}(2m-r)!(2n-r)!\innp{\varsigma_r,\, \varphi_r}_{\FH^{\otimes 2(m+n-r)}}.
  \end{align}
  In fact, the product formula and the Fubini theorem implies that
  \begin{align}
   \frac{1}{m}\E\big[\innp{D F,\,D\bar{F}}_{\FH}&=n\innp{I_{m-1,n}(f),\,I_{n-1,m}(h)}_{\FH}  \nonumber\\
   &=n \sum_{i=0}^{m\wedge n-1}\sum^{m\wedge n}_{j=0}\,{m-1\choose i}{n-1\choose i}{m\choose j}{n\choose j}i!j!\,I_{2m-1-i-j,2n-1-i-j}(f\otimes_{i+1,j}f)\nonumber\\
   &=n \sum_{i=1}^{m\wedge n }\sum^{m\wedge n}_{j=0}\,{m-1\choose i-1}{n-1\choose i-1}{m\choose j}{n\choose j}(i-1)!j!\,I_{2m-i-j,2n -i-j}(f\otimes_{i,j}f)\nonumber\\
   &=\E[{F}^2] +\sum_{r=1}^{2(m\wedge n)-1}I_{2m-r,2n-r}(\varsigma_r).\label{df square12}
  \end{align}
  On the other hand, one can obtain that
  \begin{align}
     F^2&=\sum_{i,j=0}^{m\wedge n}\,{m\choose i}{n\choose i}{m\choose j}{n\choose j}i!j!\,I_{2m-i-j,2n-i-j}(f\otimes_{i,j}f)\nonumber\\
     &=\E[{F}^2]+\sum_{r=1}^{2(m\wedge n)-1}I_{2m-r,2n-r}(\varphi_r).\label{f squre 2}
  \end{align}
  Substituting (\ref{df square12}) and (\ref{f squre 2}) into the left side of (\ref{second part}) and using the orthogonality properties of multiple integrals, one has that (\ref{second part}) holds.

Finally, by substituting (\ref{first part}) and (\ref{second part}) into the equality (\ref{fourth moment}), one obtains the identity (\ref{df ff1}).
\end{proof}

\begin{corollary}\label{cor 3-12}
Under the assumption of Proposition~\ref {prop 3-2}, there exist two positive constants $c_1(m,n)$ and $c_2(m,n)$ such that the following inequalities hold:
\begin{align}
&\quad  c_1(m,n) \Big( \sum_{0<i+j<l}\norm{f\otimes_{i,j}h}^2_{\FH^{\otimes(2(l-r))}}  + \sum_{0<i+j<l'}\norm{f\otimes_{i,j}f}^2_{\FH^{\otimes(2(l'-r))}} \Big)\nonumber \\
&\le \E[\abs{F}^4]-2\big(\E[\abs{F}^2]\big)^2 -\abs{E[{F}^2]}^2 \label{ineq 1}\\ 
&\le c_2(m,n) \Big( \sum_{0<i+j<l}\norm{f\tilde{\otimes}_{i,j}h}^2_{\FH^{\otimes(2(l-r))}}  + \sum_{0<i+j<l'}\norm{f\tilde{\otimes}_{i,j}f}^2_{\FH^{\otimes(2(l'-r))}} \Big)\label{ineq 2}
\end{align}
\end{corollary}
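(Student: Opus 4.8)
The plan is to derive both inequalities directly from the chain of identities in Proposition~\ref{prop 3-2}, reading the expansion (\ref{df s2}) for the lower bound and (\ref{df s1}) for the upper bound, and controlling the cross terms in (\ref{df ff1}) along the way. First I would establish the \emph{lower bound} (\ref{ineq 1}): start from (\ref{df s2}), which expresses $\E[\abs{F}^4]-2(\E[\abs{F}^2])^2-\abs{E[F^2]}^2$ as a sum with strictly positive combinatorial coefficients of the quantities $\norm{f\otimes_{i,j}h}^2$ (over $0<i+j<l$) and $\norm{\varphi_r}^2$ (over $1\le r\le l'-1$). Since each $\varphi_r$ is, up to positive constants, a sum of symmetrized contractions $f\tilde\otimes_{i,j}f$ with $i+j=r$, and since symmetrization is norm-nonincreasing, one does \emph{not} immediately get a lower bound in terms of $\norm{f\otimes_{i,j}f}^2$ from the $\varphi_r$ terms. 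Instead, I would use (\ref{df s1}): it contains the non-symmetrized norms $\norm{f\otimes_{i,j}f}^2$ directly (the first sum over $0<i+j<l'$) with positive coefficients, plus the terms $\norm{\psi_r}^2\ge 0$. So combining the lower bound obtainable from (\ref{df s1}) (which controls $\sum_{0<i+j<l'}\norm{f\otimes_{i,j}f}^2$) with the lower bound from (\ref{df s2}) (which controls $\sum_{0<i+j<l}\norm{f\otimes_{i,j}h}^2$), and taking $c_1(m,n)$ to be half the minimum of all the combinatorial coefficients appearing, yields (\ref{ineq 1}); the factor one-half absorbs the fact that we are using each of the two expansions to bound a different half of the target sum, or one simply notes the two expansions are equal so either may be invoked.

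Next I would prove the \emph{upper bound} (\ref{ineq 2}). Here I would work from (\ref{df s1}), the first expansion. Its two sums are $\sum_{0<i+j<l'}\binom{m}{i}\binom{n}{i}\binom{n}{j}\binom{m}{j}(m!n!)^2\norm{f\otimes_{i,j}f}^2$ and $\sum_{r=1}^{l-1}((l-r)!)^2\norm{\psi_r}^2$. The first sum is already of the right shape for the $\norm{f\tilde\otimes_{i,j}f}^2$ part of (\ref{ineq 2})? No --- it is the \emph{non-symmetrized} norm, which is \emph{larger} than the symmetrized one, so it cannot be bounded above by a constant times $\sum\norm{f\tilde\otimes_{i,j}f}^2$ in general. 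This is the crux of the matter, so instead I would use (\ref{df s2}): its first sum $\sum_{0<i+j<l}\binom{m}{i}^2\binom{n}{j}^2(m!n!)^2\norm{f\otimes_{i,j}h}^2$ again carries non-symmetrized norms. The resolution is to go back to (\ref{df ff1}) and apply Cauchy--Schwarz to each inner product: $\innp{\vartheta_r,\psi_r}\le\norm{\vartheta_r}\norm{\psi_r}$ and $\innp{\varsigma_r,\varphi_r}\le\norm{\varsigma_r}\norm{\varphi_r}$, and then observe that $\vartheta_r,\psi_r$ are sums (over $i+j=r$) of \emph{symmetrized} contractions $f\tilde\otimes_{i,j}h$ with bounded coefficients, so $\norm{\psi_r}\le C(m,n)\sum_{i+j=r}\norm{f\tilde\otimes_{i,j}h}$, and likewise $\norm{\varsigma_r},\norm{\varphi_r}\le C(m,n)\sum_{i+j=r}\norm{f\tilde\otimes_{i,j}f}$. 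Feeding these into (\ref{df ff1}), using $2ab\le a^2+b^2$ and the finiteness of the index sets, gives a bound of the form $c_2(m,n)\big(\sum_{0<i+j<l}\norm{f\tilde\otimes_{i,j}h}^2+\sum_{0<i+j<l'}\norm{f\tilde\otimes_{i,j}f}^2\big)$, which is exactly (\ref{ineq 2}).

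The main obstacle is the asymmetry between the two bounds: the lower bound wants non-symmetrized contraction norms and the upper bound wants symmetrized ones, reflecting the inequality $\norm{f\tilde\otimes_{i,j}g}\le\norm{f\otimes_{i,j}g}$. The lower bound must therefore be extracted from the expansions (\ref{df s1})--(\ref{df s2}) that display non-symmetrized norms with manifestly positive coefficients, while the upper bound must be extracted from (\ref{df ff1}) via Cauchy--Schwarz so that only symmetrized norms (those building $\vartheta_r,\psi_r,\varsigma_r,\varphi_r$) appear. A secondary point to check is that every combinatorial coefficient in the relevant sums is strictly positive for the admissible ranges of $i,j,r$ (so that $c_1(m,n)>0$), and that the index sets are finite (so that $c_2(m,n)<\infty$); both are immediate from $0\le i\le m\wedge$ (relevant upper index) etc., but should be stated. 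No delicate analysis is needed beyond Cauchy--Schwarz, the triangle inequality for the Hilbert norm, and the elementary fact that symmetrization does not increase the norm; the bookkeeping of binomial coefficients is routine and I would not carry it out in detail.
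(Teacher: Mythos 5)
Your proposal is correct and follows essentially the same route as the paper: the lower bound (\ref{ineq 1}) is read off from the two identities (\ref{df s1}) and (\ref{df s2}) with their manifestly positive coefficients, and the upper bound (\ref{ineq 2}) is obtained from (\ref{df ff1}) via Cauchy--Schwarz together with Minkowski's (triangle) inequality applied to $\vartheta_r,\psi_r,\varsigma_r,\varphi_r$. Your write-up merely makes explicit the bookkeeping that the paper leaves implicit.
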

\begin{proof}
 The inequality (\ref{ineq 1}) is implied from the identities (\ref{df s1}) and (\ref{df s2}). The inequality (\ref{ineq 2}) is implied from the identity (\ref{df ff1}), the Cauchy-Schwarz inequality and Minkowski's inequality.
\end{proof}
As an application of Corollary~\ref{cor 3-12}, one can give a direct and short proof for the equivalence between the following conditions $(\mathrm{iii})$ and $(\mathrm{iv})$, i.e.,  the convergence of the symmetrized contraction norms and that of the non-symmetrized contraction norms are equivalent.

\begin{thm}[Fourth Moment Theorems]\label{equilent cond}
Let   $\set{F_{k}=I_{m,n}(f_k)}$ with $f_k\in \FH^{\odot m}\otimes \FH^{\odot n}$  be  a  sequence of  $(m,n)$-th complex Wiener-It\^{o} multiple integrals, with $m$ and $n$ fixed and
$m+n\ge 2$.  Suppose that as $k\to \infty$, $E[\abs{F_k}^2]\to \sigma^2$ and $E[F_k^2]\to c+\mi b $, where $\abs{\cdot}$ is the absolute value (or modulus) of a complex number and  $c,b\in \Rnum$. Then   the following statements  are equivalent:
\begin{itemize}
\item[\textup{(i)}]The sequence $(\RE F_k,\,\IM  F_k)$ converges in law to a bivariate normal distribution with covariance matrix $\tensor{C}= \frac{1}{2}\begin{bmatrix}\sigma^2+c & b\\ b & \sigma^2-c  \end{bmatrix}$,
\item[\textup{(ii)}]$E[\abs{F_k}^4]\to c^2+b^2+2\sigma^4$.
\item[\textup{(iii)}] $\norm{f_k\otimes_{i,j} f_k}_{\FH^{\otimes ( 2(l-i-j))}}\to 0$ and $\norm{f_k{\otimes}_{i,j} h_k}_{\FH^{\otimes ( 2(l-i-j))}}\to 0 $ for any $0<i+j\le l-1$ where $l=m+n$ and $h_k$ is the kernel of $\bar{F}_k$, i.e., $\bar{F}_k=I_{n,m}(h_k)$.
\item[\textup{(iv)}] $\norm{f_k\tilde{\otimes}_{i,j} f_k}_{\FH^{\otimes ( 2(l-i-j))}}\to 0$ and $\norm{f_k\tilde{{\otimes}}_{i,j} h_k}_{\FH^{\otimes ( 2(l-i-j))}}\to 0 $ for any $0<i+j\le l-1$,
\item[\textup{(v)}] $\norm{D F_k}^2_{\FH}$, $\norm{D \bar{F}_k}^2_{\FH}$ and $\innp{D F_k,\, {D} \bar{F}_k}_{\FH}$ converge to a constant in $L^2(\Omega)$ as $k$ tends to infinity, where $D$ is the complex Malliavin derivatives. That is to say, $\mathrm{Var}(\norm{D F_k}^2_{\FH})\to 0$, $\mathrm{Var}(\norm{D \bar{ F}_k}^2_{\FH})\to 0$ and $\mathrm{Var}(\innp{D F_k,\, {D} \bar{F}_k}_{\FH} )\to 0$ as $k$ tends to infinity.
\end{itemize}
\end{thm}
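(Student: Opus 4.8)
\textbf{Proof proposal for the Fourth Moment Theorem (Theorem~\ref{equilent cond}).}

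The plan is to establish the chain of equivalences by combining the real-complex transfer (Theorem~\ref{pp2} and Proposition~\ref{prop 3.5}) with the classical real fourth moment theorem of Nualart--Peccati--Nourdin on the one hand, and with the explicit fourth-moment expansions of Proposition~\ref{prop 3-2} and Corollary~\ref{cor 3-12} on the other. The logical skeleton I would use is: (i)$\Leftrightarrow$(ii) via the real transfer, (ii)$\Leftrightarrow$(iii) via the expansions (\ref{df s1})--(\ref{df s2}), (iii)$\Leftrightarrow$(iv) via Corollary~\ref{cor 3-12}, and (ii)$\Leftrightarrow$(v) via (\ref{first part})--(\ref{second part}) together with the identity (\ref{fourth moment}).

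First I would prove (i)$\Leftrightarrow$(ii). By Theorem~\ref{pp2}, write $F_k = \mathcal I_{m+n}(u_k) + \mi\, \mathcal I_{m+n}(v_k)$ with $u_k,v_k$ real kernels in $(\FH_\R\oplus\FH_\R)^{\odot(m+n)}$; note $m+n\ge 2$ is exactly the hypothesis needed for the real fourth moment theorem to apply to each component. The assumptions $E[|F_k|^2]\to\sigma^2$ and $E[F_k^2]\to c+\mi b$ translate into convergence of the entries of the covariance matrix of $(\mathcal I_{m+n}(u_k),\mathcal I_{m+n}(v_k))$ to those of $\tensor C$ (using $\RE F_k = \mathcal I_{m+n}(u_k)$, $\IM F_k = \mathcal I_{m+n}(v_k)$, $|F_k|^2 = (\RE F_k)^2+(\IM F_k)^2$, and $F_k^2 = (\RE F_k)^2-(\IM F_k)^2 + 2\mi\,\RE F_k\,\IM F_k$). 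Then the bivariate real fourth moment theorem (in the Peccati--Tudor form, which handles the vector case and allows a possibly degenerate limit covariance) gives that joint convergence to the Gaussian law with covariance $\tensor C$ is equivalent to $E[(\RE F_k)^4]\to 3\,\mathrm{Var}(\RE F_k)^2$ and $E[(\IM F_k)^4]\to 3\,\mathrm{Var}(\IM F_k)^2$, plus convergence of the relevant mixed fourth cumulants to those of the Gaussian. A short computation expanding $E[|F_k|^4] = E[((\RE F_k)^2+(\IM F_k)^2)^2]$ shows that, under the already-assumed second-moment convergence, the single scalar condition $E[|F_k|^4]\to c^2+b^2+2\sigma^4$ is equivalent to the simultaneous vanishing of all fourth cumulants of $(\RE F_k,\IM F_k)$; this is the standard ``fourth moment equals fourth cumulant condition'' repackaged for the complex modulus, so (i)$\Leftrightarrow$(ii) follows.

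Next, (ii)$\Leftrightarrow$(iii) is read off directly from Proposition~\ref{prop 3-2}: the left-hand side of (\ref{df s1})--(\ref{df s2}) is precisely $E[|F_k|^4] - 2(E[|F_k|^2])^2 - |E[F_k^2]|^2$, which tends to $0$ iff $E[|F_k|^4]\to c^2+b^2+2\sigma^4$ (using second-moment convergence). Both (\ref{df s1}) and (\ref{df s2}) express this quantity as a \emph{sum of squared norms with strictly positive coefficients} — in particular (\ref{df s2}) contains all the terms $\|f_k\otimes_{i,j}h_k\|^2$ for $0<i+j<l$, and (\ref{df s1}) together with the symmetry relations contains the terms controlling $\|f_k\otimes_{i,j}f_k\|^2$ — so the sum tends to zero iff every summand does, giving (iii). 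Conversely (iii) forces each squared norm to zero hence the sum to zero. Then (iii)$\Leftrightarrow$(iv) is immediate from Corollary~\ref{cor 3-12}: inequality (\ref{ineq 1}) shows (iii)$\Rightarrow$ the middle quantity $\to 0$ $\Rightarrow$ (via (\ref{ineq 2}) read backwards, i.e.\ the middle quantity dominates $c_2^{-1}$ times the symmetrized norms is not quite what (\ref{ineq 2}) says, so here one instead uses that the middle quantity $\to 0$ combined with (\ref{df ff1}) Cauchy--Schwarz'd the other way, or more cleanly: the symmetrized norm is bounded by the non-symmetrized one by Minkowski, giving (iii)$\Rightarrow$(iv); and (\ref{ineq 1}) plus (ii)$\Leftrightarrow$(iii) gives (iv)$\Rightarrow$ middle $\to 0 \Rightarrow$ (iii)). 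Finally (ii)$\Leftrightarrow$(v): equation (\ref{fourth moment}) writes $E[|F_k|^4]$ in terms of $E[|F_k|^2\|DF_k\|^2]$ and $E[\langle DF_k,D\bar F_k\rangle \bar F_k^2]$; by (\ref{first part}) and (\ref{second part}) these equal $(E[|F_k|^2])^2$, resp.\ $|E[F_k^2]|^2$, plus $L^2$-norms of the chaos expansions of $\|DF_k\|^2$ and $\langle DF_k,D\bar F_k\rangle$ minus their means. Hence the middle quantity $\to 0$ iff all those chaos components vanish asymptotically, i.e.\ iff $\mathrm{Var}(\|DF_k\|^2_\FH)\to 0$ and $\mathrm{Var}(\langle DF_k,D\bar F_k\rangle_\FH)\to 0$; applying the same to $\bar F_k = I_{n,m}(h_k)$ yields the condition on $\|D\bar F_k\|^2$, and symmetry ($\|D\bar F_k\|^2$ has an analogous expansion) closes the loop.

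The main obstacle I expect is the bookkeeping in (ii)$\Leftrightarrow$(iii), specifically verifying that the two expansions (\ref{df s1}) and (\ref{df s2}) \emph{together} control all of the contraction norms appearing in (iii) with uniformly positive coefficients — a single one of the two does not obviously see every pair $(i,j)$ because of how the index ranges $0<i+j<l'$ versus $0<i+j<l$ and the ``$\psi_r,\varphi_r$'' grouped terms interact — so one must argue that a term like $\|f_k\otimes_{i,j}f_k\|^2$ with $i+j\ge l'$ (which cannot appear directly since contractions there are not defined for both indices simultaneously) is genuinely absent, and that the grouped sums $\sum_r\|\psi_r\|^2$ etc.\ do not hide cancellation — they cannot, being sums of squares, but one must still relate $\|\psi_r\|$ back to the individual $\|f_k\tilde\otimes_{i,j}h_k\|$, which is where Corollary~\ref{cor 3-12} and the triangle inequality do the real work. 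A secondary technical point is getting the degenerate-covariance case of the real vector fourth moment theorem correct (when $\det\tensor C = 0$), but this is handled by the Peccati--Tudor theorem as stated. Everything else is routine expansion of squares and appeals to orthogonality of chaoses.
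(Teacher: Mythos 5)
You should first be aware that the paper does not reprove Theorem~\ref{equilent cond} in full: it refers the five equivalences to \cite{camp}, \cite{chw 17} and \cite{cl2}, and its only new input is the short argument for (iii)$\Leftrightarrow$(iv) via Corollary~\ref{cor 3-12}; your treatment of that step (symmetrization is dominated by the non-symmetrized norms for (iii)$\Rightarrow$(iv), and (iv)$\Rightarrow$ the middle quantity of (\ref{ineq 1})--(\ref{ineq 2}) tends to $0$ $\Rightarrow$ (iii)) is exactly the paper's intended use of that corollary, and your (ii)$\Leftrightarrow$(iii) via the nonnegative expansions (\ref{df s1})--(\ref{df s2}) is also sound in spirit. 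The genuine gap is that you never establish any implication back to (i). Your claim that, given second-moment convergence, condition (ii) is ``equivalent to the simultaneous vanishing of all fourth cumulants of $(\RE F_k,\IM F_k)$'' by a short computation is the crux, not a computation: with $X_k=\RE F_k$, $Y_k=\IM F_k$ one has $\E[\abs{F_k}^4]-(2\sigma^4+c^2+b^2)=\kappa_4(X_k)+\kappa_4(Y_k)+2\kappa(X_k,X_k,Y_k,Y_k)+o(1)$, and while the two pure cumulants are nonnegative, the mixed cumulant $\kappa(X,X,Y,Y)=\E[X^2Y^2]-\E[X^2]\E[Y^2]-2\E[XY]^2$ is not manifestly nonnegative for elements of a fixed real chaos; proving it (or bypassing it) is essentially as hard as the positivity encoded in Proposition~\ref{prop 3-2}. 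The published routes you would need to import are either the kernel transfer of \cite{cl2} (expressing the real kernels $u_k,v_k$ of Theorem~\ref{pp2} in terms of $f_k,h_k$ and showing (iii) forces their contractions to vanish, then Peccati--Tudor), or the Stein/Malliavin step (v)$\Rightarrow$(i) of \cite{chw 17}, or Campese's generator method; none of these appears in your outline, so (ii),(iii),(iv),(v)$\Rightarrow$(i) is unproven. (Minor: (i)$\Rightarrow$(ii) also needs uniform integrability, supplied by hypercontractivity, Proposition~\ref{pp201}.)

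A second concrete error is in your (ii)$\Leftrightarrow$(v) step: (\ref{first part}) and (\ref{second part}) express $\frac1m\E[\abs{F}^2\norm{DF}^2_{\FH}]$ and $\frac1m\E[\innp{DF,D\bar F}_{\FH}\bar F^2]$ as $(\E\abs{F}^2)^2$, resp.\ $\abs{\E F^2}^2$, plus sums of \emph{inner products} $\innp{\vartheta_r,\psi_r}$ and $\innp{\varsigma_r,\varphi_r}$ of two different kernel families, not as variances or squared norms; these terms carry no sign, so ``the gap tends to $0$ iff all those chaos components vanish'' does not follow from (\ref{fourth moment}) and (\ref{df ff1}). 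The variances in (v) are governed by the different kernels $\eta_r,\xi_r,\nu_r$ of Proposition~\ref{etaxinu}, i.e.\ (\ref{var1})--(\ref{var3}), which you never invoke; the workable route inside this paper's toolkit is (iv)$\Rightarrow$(v) by bounding (\ref{var1})--(\ref{var3}) through the symmetrized contraction norms, with the converse direction again passing through (v)$\Rightarrow$(i) as in \cite{chw 17}. Finally, the bookkeeping issue you flagged in (ii)$\Rightarrow$(iii) is real: when $m\neq n$ the pair $i=j=m\wedge n$ lies in the range of (iii) but outside the sum in (\ref{df s1}), so you must supply a Fubini-type identity relating $\norm{f\otimes_{m\wedge n,m\wedge n}f}$ to contractions against $h$ that do occur in (\ref{df s2}); as written this is left unresolved.
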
 The above fourth moment theorems are shown by different authors with different methods \cite{ camp, chw 17, cl2}.
S.Campese \cite{camp} uses stein's method for a general context of Markov diffusion generators. \cite{cl2} is an indirect proof using Theorem~\ref{pp2} and the known fourth moment theorem for real multiple Wiener-It\^o integrals. \cite{chw 17} is an adaption of the classical arguments by D. Nualart, G.Peccati and S. Ortiz-Latorre for the one-dimensional real-valued case in \cite{np,nourorti}. That is to say, Chen et al\cite{chw 17} show the five equivalent conditions by means of $(\mathrm{i})\Rightarrow(\mathrm{ii})\Rightarrow (\mathrm{iii})\Rightarrow (\mathrm{iv})\Rightarrow (\mathrm{v})\Rightarrow (\mathrm{i})$.

\begin{ques}\it 

A typical application of Theorem~\ref{equilent cond} is to  estimate the parameter of complex OU processes. In detials, 
to model the Chandler wobble, or variation of latitude concerning with  the rotation of the earth, M. Arat\'{o}, A.N. Kolmogorov and Ya.G. Sinai \cite{arta3} 
proposed the following   stochastic  linear  equation
\begin{equation}\label{cp}
  \dif Z_t=-\gamma Z_t\dif t+ \sqrt{a}\dif \zeta_t\,,\quad t\ge 0\,,
\end{equation}%
where $Z_t=X_1(t)+\mi X_2(t)$ is a complex-valued process, $\gamma=\lambda-\mi \omega,\,\lambda>0$, $a>0$ and $\zeta_t=\frac{B^1_t+i B^2_t}{\sqrt 2} $ is a complex fractional Brownian motion
 with $(B^1_t, B^2_t)$ a two dimensional fractional Brownian  motion with a fixed Hurst index $H\in [\frac12,\frac34)$. 
The least squares estimate of the drift coefficient is given by a ratio of two Gaussian functionals:
\begin{equation}\label{fou}
  \hat{\gamma}_T=-\frac{\int_0^T \bar{Z}_t\dif Z_t}{\int_0^T \abs{Z_t}^2\dif t}=\gamma-\sqrt{a}\frac{\int_0^T \bar{Z}_t\dif \zeta_t}{\int_0^T \abs{Z_t}^2\dif t},
\end{equation}
where the integral is a divergence  integral. When $Z_0=0$, one can rewrite the above identity as
\begin{align*}
\sqrt{T} (\hat{\gamma}_T-\gamma)=\sqrt{a}\frac{I_{1,1}\big( \frac{1}{\sqrt{T}}e^{-\gamma (s-r)}\mathrm{1}_{\set{0\le r\le s\le T}} \big)}{\frac{1}{T}\int_0^T \abs{Z_t}^2\dif t}
\end{align*}
The strong consistency and asymptotic normality of the estimator $\hat{\gamma}_T$ are shown for $H\in [\frac12,\,\frac34)$ in \cite{chw 17} by means of Theorem~\ref{equilent cond}.

For the 1-dimensional real OU process, the asymptotic normality of the least squares estimator holds for $H\in (0,\frac{3}{4}]$ and a noncentral limit theorem holds for $H\in (\frac{3}{4}, 1)$ (see \cite{hn,hnz}). 
The question is whether the asymptotic normality of the estimator $\hat{\gamma}_T$ in (\ref{fou}) holds for $H\in (0,\frac{1}{2})$ or not?

\end{ques}

The following proposition is cited from \cite{chw 17}.
\begin{prop}\label{etaxinu}
Suppose that $l=m+n$ and
\begin{align}
 \eta_r&=\sum_{i+j=r}i{m\choose i}^2{n\choose j}^2i!j!f\tilde{\otimes}_{i,j}h,\label{etak}\\
 \xi_r& =\sum_{i+j=r}j {m\choose i}^2{n\choose j}^2i!j!h\tilde{\otimes}_{j, i}f,\label{xik}\\
 \nu_r&=\sum_{i+j=r} i {m\choose i}{n \choose i}{n\choose j}{m\choose j}i!j!f\tilde{\otimes}_{i, j}f,\label{nuk}
\end{align} then one has that
\begin{align}
   \mathrm{Var}(\norm{D I_{m,n}(f)}_{\FH}^2)&= \sum_{r=1}^{l-1} [(l-r)!]^2 \norm{\eta_r}^2_{\FH^{\otimes (2(l-r) )}},\label{var1}\\
   \mathrm{Var}(\norm{\bar{D} I_{m,n}(f)}_{\FH}^2)&= \sum_{r=1}^{l-1} [(l-r)!]^2 \norm{\xi_r}^2_{\FH^{\otimes (2(l-r) )}},\label{var2}\\
   \mathrm{Var}(\innp{D I_{m,n}(f),\, {D} \overline{I_{m,n}(f)}}_{\FH} )&=\sum_{r=1}^{l-1} (2m-r)! (2n-r)!  \norm{\nu_r}^2_{\FH^{\otimes (2(l-r) )}}\label{var3}.
\end{align}
\end{prop}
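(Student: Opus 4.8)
The plan is to compute each variance directly from the chaos expansion, using the product formula (Theorem~\ref{pdt fml}) together with the orthogonality of complex multiple Wiener--It\^o integrals. Recall from Proposition~\ref{prop d imn} that $D_t I_{m,n}(f) = m\, I_{m-1,n}(f(t,\cdot,\cdot))$ and $\bar{D}_s I_{m,n}(f) = n\, I_{m,n-1}(f(\cdot,s,\cdot))$. Hence $\norm{D I_{m,n}(f)}_\FH^2 = m^2 \int_T I_{m-1,n}(f(t,\cdot,\cdot))\,\overline{I_{m-1,n}(f(t,\cdot,\cdot))}\,\nu(\dif t)$, which by \eqref{re-cjgt} equals $m^2\int_T I_{m-1,n}(f(t,\cdot,\cdot)) I_{n,m-1}(h(\cdot,\cdot,t))\,\nu(\dif t)$, where $h$ is the reversed complex conjugate of $f$. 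Apply the product formula \eqref{ii} with parameters $(a,b)=(m-1,n)$ and $(c,d)=(n,m-1)$ to the integrand, then integrate the $t$-variable back in; the effect of the integration in $t$ is precisely to raise the first contraction index by one, turning $f\otimes_{i,j}h$ into $f\otimes_{i+1,j}h$ (this is the same bookkeeping used in \eqref{df square1} in the proof of Proposition~\ref{prop 3-2}). Collecting terms with $i+j=r$ and comparing with the definition \eqref{etak} of $\eta_r$, one gets an expansion $\norm{D I_{m,n}(f)}_\FH^2 = \E[\norm{D F}_\FH^2] + \sum_{r=1}^{l-1} I_{l-r,l-r}(\eta_r)$ (after replacing $f\otimes_{i,j}h$ by its symmetrization $f\tilde\otimes_{i,j}h$, which is harmless since $I_{p,q}$ only sees the symmetrization). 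Taking variances and invoking the isometry \eqref{mulint} — namely $\E|I_{p,q}(g)|^2 = p!\,q!\,\norm{\tilde g}^2_{\FH^{\otimes(p+q)}}$, applied with $p=q=l-r$ — yields \eqref{var1}. The identity \eqref{var2} follows by the same computation with $D$ replaced by $\bar D$, or simply by applying \eqref{var1} to $\bar F = I_{n,m}(h)$ and unwinding the definitions, noting $h\tilde\otimes_{j,i}f$ is the natural object appearing and matching \eqref{xik}.

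For \eqref{var3}, the mixed term, I would start from $\innp{D F, D\bar F}_\FH = mn \int_T I_{m-1,n}(f(t,\cdot,\cdot))\, I_{n-1,m}(h(\cdot,\cdot,t))\,\nu(\dif t)$, again using Proposition~\ref{prop d imn} for both $DF = m I_{m-1,n}(f(t,\cdot,\cdot))$ and $D\bar F = D I_{n,m}(h) = n I_{n-1,m}(h(t,\cdot,\cdot))$. Now apply the product formula \eqref{ii} with $(a,b)=(m-1,n)$, $(c,d)=(n-1,m)$; the contractions that arise are of the form $f\otimes_{i,j}f$ (not $f\otimes_{i,j}h$, because the second kernel $h$ is the reversed conjugate of $f$, and the pairing in the contraction conjugates it back). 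Integrating in $t$ raises one index, producing $f\otimes_{i+1,j}f$, exactly as in \eqref{df square12} in Proposition~\ref{prop 3-2}'s proof. The resulting integrand lands in $\mathscr{H}_{2m-r,2n-r}$, so the relevant isometry constant is $(2m-r)!\,(2n-r)!$, and after symmetrizing and grouping by $i+j=r$ one recognizes the coefficient combination defining $\nu_r$ in \eqref{nuk}. Taking the variance kills the degree-$(0,0)$ (expectation) term and leaves \eqref{var3}.

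The main obstacle will be the combinatorial bookkeeping: tracking how the binomial coefficients $\binom{m-1}{i}$, $\binom{n}{j}$ etc. from the product formula recombine, after the index shift $i\mapsto i+1$ and the factor $i$ that this shift produces (via $i\binom{m}{i} = m\binom{m-1}{i-1}$), into exactly the coefficients $i\binom{m}{i}^2\binom{n}{j}^2 i!j!$ appearing in \eqref{etak}–\eqref{nuk}. One must also be careful that the product formula \eqref{ii} as stated is for symmetric kernels, whereas $f(t,\cdot,\cdot)$ need not be symmetric in its remaining arguments after fixing $t$; the clean way around this is to note that the outputs $I_{p,q}(\cdot)$ automatically symmetrize, or to invoke the non-symmetric product formula of Hoshino et al.\ mentioned after Theorem~\ref{pdt fml}. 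Everything else is a routine orthogonality-plus-isometry argument, and since the structurally identical computations for $\abs F^2$, $F^2$, $\frac1m\norm{DF}^2$ and $\frac1m\innp{DF,D\bar F}$ were already carried out in the proof of Proposition~\ref{prop 3-2}, I would cross-reference those rather than repeat them.
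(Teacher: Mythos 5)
Your proposal is correct and takes essentially the same route as the paper: Proposition~\ref{etaxinu} is simply quoted from \cite{chw 17}, and the chaos expansions you invoke are exactly (up to the factor $m$) the expansions (\ref{df square1}) and (\ref{df square12}) appearing in the proof of Proposition~\ref{prop 3-2}, followed by orthogonality of the chaoses $\mathscr{H}_{p,q}$ and the isometry $\E|I_{p,q}(g)|^2=p!\,q!\,\|\tilde g\|^2_{\FH^{\otimes(p+q)}}$. (Your one over-caution is unnecessary: since $f\in\FH^{\odot m}\otimes\FH^{\odot n}$, the kernel $f(t,\cdot,\cdot)$ remains symmetric in each remaining group of variables after fixing $t$, so the symmetric product formula applies directly.)
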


\begin{ques}\it
 For real multiple Wiener-It\^{o} integrals,  there is a useful inequality in p96 of \cite{np} which plays a key role in an elegant proof of the fourth moment theorem, i.e., for $G=\mathcal{I}_q(g), q\ge 2$,
 \begin{equation*}
   \mathrm{Var}\big(\frac{1}{q}\|DG\|^2_{\FH_{\mathbb{R}}}\big)\le\frac{q-1}{3q}\big(E[G^4]-3E[G^2]^2\big)\le (q-1) \mathrm{Var}\big(\frac{1}{q}\|DG\|^2_{\FH_{\mathbb{R}}}\big).
 \end{equation*}
 
From Corollary~\ref{cor 3-12} and Proposition~\ref{etaxinu}, one can easily obtain that there exists a positive constant $c(m,n)$ such that the following inequality holds.
\begin{align*}
&\quad \mathrm{Var}(\norm{D I_{m,n}(f)}_{\FH}^2)+ \mathrm{Var}(\norm{\bar{D} I_{m,n}(f)}_{\FH}^2)+ \mathrm{Var}(\innp{D I_{m,n}(f),\, {D} \overline{I_{m,n}(f)}}_{\FH} )\\
&\le c(m,n) \Big( \E[\abs{F}^4]-2\big(\E[\abs{F}^2]\big)^2 -\abs{E[{F}^2]}^2\Big).
\end{align*}
Although the reversed inequality is implied by  Theorem~\ref{equilent cond},  the question is whether one can find a direct proof of the reversed inequality. 

Developing a systematic approach to compute the moments of complex multiple Wiener-It\^o integrals is  helpful to  explore the non-Gaussian center limit theorem.
\end{ques}

\section{Appedix}

An alternative definition of  complex Hermite polynomials is as follows. 
\begin{defn}\it 
Complex Hermite polynomials are  the sequence on $\Cnum$ for $m,n\in \Nnum$,
\begin{align}
  J_{m,n}(z,\rho)&={\rho}^{m+n}(\partial^*)^m(\bar{\partial}^*)^n 1.\label{itldefn}
\end{align} where $z=x+\mi y$ and
  \begin{equation*}
    (\partial^*\phi)(z)=-\frac{\partial}{\partial \bar{z}}\phi(z)+\frac{z }{\rho}\phi(z),\quad (\bar{\partial}^*\phi)(z)=-\frac{\partial}{\partial {z}}\phi(z)+\frac{\bar{z}}{\rho }\phi(z).
  \end{equation*} are  the adjoint of the Wirtinger derivative $\frac{\partial}{\partial {z}},\,\frac{\partial}{\partial \bar{z}} $(or say: the complex creation operator) in the Hilbert space $L^2(\mu)$  with $\dif\mu = \frac{1}{\pi \rho}\exp\set{-\frac{x^2+y^2}{\rho}}\dif x\dif y$.
\end{defn}
Complex Hermite polynomials $ J_{m,n}(z,\rho)$ satisfy that \cite{cl, Ghan, is, ito} :
 \begin{itemize}
    \item[\textup{1)}]       partial derivatives:  \begin{align}
     \frac{\partial}{\partial z} J_{m,n}(z,\rho)&=m J_{m-1,n}(z,\rho),\qquad
      \frac{\partial}{\partial \bar{z}} J_{m,n}(z,\rho)=n J_{m,n-1}(z,\rho),\label{partizz}\\
      \frac{\partial}{\partial \rho} J_{m,n}(z,\rho)&=-mn J_{m-1,n-1}(z,\rho).\label{parti rho}
  \end{align}
    \item[\textup{2)}] recursion formula:
    \begin{align}
       J_{ m+1,n}(z,\rho)= {z}J_{m,n}(z,\rho)-n\rho J_{m,n-1}(z,\rho),\label{jm1n}\\
       J_{m,n+1}(z,\rho)=\bar{z}J_{m,n}(z,\rho)-m\rho J_{m-1,n}(z,\rho). \label{jmn1}
    \end{align}
\item[\textup{3)}]  Rodrigues's formula:
\begin{align}
J_{m,n}(z,\rho)&=(-\rho)^{m+n}e^{\abs{z}^2/\rho}\bar{\partial}^{m}\partial^{n}(e^{-\abs{z}^2/\rho}).
\end{align}
\item[\textup{4)}] Orthonormal basis: $\set{(m!n!\rho^{m+n})^{-\frac12}J_{m,n}(z,\,\rho)}$ is an orthonormal basis of $L_{\Cnum}^2(\mu)$.
\item[\textup{5)}] Eigenfunctions: Let $c\in \Rnum$,
\begin{equation}\label{eigen}
  [(1+\mi c)z \frac{\partial}{\partial z}+(1-\mi c)\bar{z} \frac{\partial}{\partial \bar{z}}-2\rho \frac{\partial^2}{\partial z\partial \bar{z}} ]J_{m,n}(z,\,\rho)=[m+n+\mi (m-n)c]J_{m,n}(z,\,\rho).
 \end{equation}
 \item[\textup{6)}]Monomials: 
 \begin{align}
 z^m\bar{z}^n=\sum_{r=0}^{m\wedge n}{m \choose r}{n \choose r}r!\rho^r J_{m-r,n-r}(z,\rho).
 \end{align}
  \item[\textup{7)}]Product: 
 \begin{align}
 J_{m,n}(z,\rho)J_{p,q}(z,\rho)=\sum_{i=0}^{m\wedge q}\sum_{j=0}^{n\wedge p}{m \choose i}{n \choose j}{p \choose j}{q \choose i}i!j!\rho^{i+j}J_{m+p-i-j,n+q-i-j}(z,\rho).
 \end{align}
 \end{itemize}
 
 Moreover, the following property is used to show Proposition~\ref{prop 3.5}.
\begin{prop}\label{2dim2}
Let $z=x + \mi y$ with $x,y\in \Rnum$. Then real and complex Hermite polynomials satisfy that
\begin{equation*} 
    \begin{array}{ll}
      J_{m,l-m}(z) = \sum_{k=0}^{l}{\mi^{l-k}}\sum_{r+s=k}{m \choose r}{l-m \choose s}(-1)^{l-m-s} H_k(x)H_{l-k}(y),\\
        H_k(x)H_{l-k}(y) = \frac{\mi^{l-k}}{2^{l}}\sum_{m=0}^l \sum_{r+s=m}{k \choose r}{l-k \choose s}(-1)^{s} J_{m,l-m}(z).
    \end{array}
 \end{equation*}
 Thus, both of the class $\set{ J_{k,l}(z):\,k+l=n}$ and the class $\set{ H_k(x)H_{l}(y):\,k+l=n }$ can generate the same linear subspace of $L^2(\Cnum,\nu)$  with $\dif\nu=\frac{1}{2\pi}e^{-\frac{x^2+y^2}{2}}\dif x\dif y$. 
 In addition, one has that
 \begin{equation*}
   H_l(x)H_{n-l}(y)=\sum_{k=0}^n \tensor{M}^{-1}_{l,k}H_n(x\cos\theta_k+y\sin\theta_k),\quad l=0,\dots,n,
\end{equation*}
where
for any $0<\theta_{n}<\dots<\theta_0<\pi$, the $(n+1)\times (n+1)$ matrix $(\tensor{M}_{i,j})$ is given by
\begin{align}\label{matr}
   \tensor{M}&=\tensor{M}(\theta_{0},\,\dots,\,\theta_n)\nonumber \\
     &=\left[
\begin{array}{lllll}
     (\sin\theta_0)^n & \,\,{n \choose 1}(\sin{\theta_0})^{n-1}\cos\theta_0   & \dots  &\,\, {n \choose n-1}\sin{\theta_0}(\cos\theta_0)^{n-1}&\,\, (\cos\theta_0)^{n}\\
     (\sin\theta_1)^n &\,\, {n \choose 1}(\sin{\theta_1})^{n-1}\cos\theta_1   & \dots  &\,\, {n \choose n-1}\sin{\theta_1}(\cos\theta_1)^{n-1}&\,\, (\cos\theta_1)^{n}\\
         \hdotsfor{5}\\
     (\sin\theta_n)^n &\,\, {n \choose 1}(\sin{\theta_n})^{n-1}\cos\theta_n   & \dots  &\,\, {n \choose n-1}\sin{\theta_n}(\cos\theta_n)^{n-1}&\,\, (\cos\theta_n)^{n}
\end{array}
\right].
\end{align}
\end{prop}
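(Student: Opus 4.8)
The plan is to deduce all four statements from the defining generating function (\ref{gene}) with $\rho=2$. Writing $z=x+\mi y$ and $\lambda=\mu+\mi\nu$ with $\mu,\nu\in\Rnum$, one computes $\lambda\bar z+\bar\lambda z=2(\mu x+\nu y)$ and $|\lambda|^2=\mu^2+\nu^2$, so the exponent on the left of (\ref{gene}) splits as $(2\mu x-2\mu^2)+(2\nu y-2\nu^2)$. Applying the classical generating identity $e^{tx-t^2/2}=\sum_{m\ge0}\frac{t^m}{m!}H_m(x)$ for the probabilist Hermite polynomials with $t=2\mu$ and $t=2\nu$, the left side of (\ref{gene}) becomes $\big(\sum_{k}\frac{(2\mu)^k}{k!}H_k(x)\big)\big(\sum_{j}\frac{(2\nu)^j}{j!}H_j(y)\big)$, and comparing with the right side of (\ref{gene}) yields the single key identity
\begin{align*}
  \sum_{m,n\ge0}\frac{\bar\lambda^{m}\lambda^{n}}{m!\,n!}\,J_{m,n}(z)=\sum_{k,j\ge0}\frac{(2\mu)^{k}(2\nu)^{j}}{k!\,j!}\,H_k(x)H_j(y).
\end{align*}

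Both sides are polynomials times exponentials in $(\mu,\nu)$, hence extend to entire functions on $\Cnum^2$; I may therefore treat $\lambda:=\mu+\mi\nu$ and $\bar\lambda:=\mu-\mi\nu$ as two independent complex variables and compare the homogeneous parts of each total degree $l$. For the first expansion I substitute $2\mu=\lambda+\bar\lambda$ and $2\nu=-\mi(\lambda-\bar\lambda)$ into the right side, expand $(\lambda+\bar\lambda)^k$ and $(\lambda-\bar\lambda)^j$ by the binomial theorem, extract the coefficient of $\bar\lambda^{m}\lambda^{l-m}$, and re-index the resulting double sum so that the inner part collapses into $\sum_{r+s=k}\binom{m}{r}\binom{l-m}{s}(-1)^{l-m-s}$, producing $J_{m,l-m}(z)=\sum_{k=0}^{l}\mi^{l-k}\big(\sum_{r+s=k}\binom{m}{r}\binom{l-m}{s}(-1)^{l-m-s}\big)H_k(x)H_{l-k}(y)$. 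For the second expansion I instead use $\bar\lambda=\mu-\mi\nu$, $\lambda=\mu+\mi\nu$ on the left side and read off the coefficient of $(2\mu)^{k}(2\nu)^{l-k}$. Statement (iii) is then immediate: inside each homogeneous degree-$n$ subspace the two families $\{J_{k,n-k}(z)\}_{k=0}^{n}$ and $\{H_k(x)H_{n-k}(y)\}_{k=0}^{n}$ each have $n+1$ members, and the two readings of the key identity present each family as an invertible linear combination of the other, so they generate the same subspace of $L^2(\Cnum,\nu)$.

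For the last identity I first record the Hermite ``rotation'' formula: if $a,b\in\Rnum$ with $a^2+b^2=1$ then $e^{t(ax+by)-t^2/2}=e^{(ta)x-(ta)^2/2}\,e^{(tb)y-(tb)^2/2}$, the cross term cancelling precisely because $a^2+b^2=1$, so comparing coefficients of $t^n$ gives $H_n(ax+by)=\sum_{q=0}^{n}\binom{n}{q}a^{q}b^{\,n-q}H_q(x)H_{n-q}(y)$. Taking $a=\cos\theta_p$, $b=\sin\theta_p$ for $p=0,\dots,n$ shows that the vector $\big(H_n(x\cos\theta_p+y\sin\theta_p)\big)_{p=0}^{n}$ equals $\tensor{M}$ applied to the vector $\big(H_q(x)H_{n-q}(y)\big)_{q=0}^{n}$, with $\tensor{M}$ exactly the matrix (\ref{matr}). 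It remains to see that $\tensor{M}$ is invertible: factoring $\binom{n}{q}$ out of column $q$ and $(\sin\theta_p)^{n}$ out of row $p$ reduces $\det\tensor{M}$ to a Vandermonde determinant in the numbers $\cot\theta_0,\dots,\cot\theta_n$, which is nonzero because $\cot$ is injective on $(0,\pi)$, the $\theta_p$ are pairwise distinct, and each $\sin\theta_p\neq0$. Inverting the relation yields $H_l(x)H_{n-l}(y)=\sum_{k=0}^{n}\tensor{M}^{-1}_{l,k}\,H_n(x\cos\theta_k+y\sin\theta_k)$.

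Conceptually this is all routine. The one step that needs care is the combinatorial bookkeeping in the first two expansions: after the substitution $2\nu=-\mi(\lambda-\bar\lambda)$ one must track the powers of $\mi$, the various signs, and the re-indexing of the double binomial sums to land on the stated closed forms. In particular the prefactor $(-\mi)^{\,l-k}$ produced by the $2\nu$-expansion has to be matched against the stated $\mi^{\,l-k}$; the discrepant factor $(-1)^{l-k}$ is absorbed by a parity check that uses $l=m+n$. (One could equally substitute the explicit formula (\ref{jal}) together with the monomial expansion of $H_k$ and verify the identities by a longer direct computation, but the generating-function route keeps the bookkeeping shortest.) I expect this re-indexing — together with the short Vandermonde computation securing invertibility of $\tensor{M}$ — to be the only real work.
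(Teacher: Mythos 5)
The paper itself states this proposition without proof (it is imported from the authors' earlier papers), so your argument has to stand on its own. Most of it does: the key generating-function identity $\sum_{m,n}\frac{\bar\lambda^{m}\lambda^{n}}{m!n!}J_{m,n}(z)=\sum_{k,j}\frac{(2\mu)^{k}(2\nu)^{j}}{k!j!}H_k(x)H_j(y)$ is correct, the polarization step allowing $\lambda$ and $\bar\lambda$ to be treated as independent variables is legitimate, mutual expressibility does give equality of the two spans, and the last identity is handled completely: the rotation formula $H_n(ax+by)=\sum_{q}\binom{n}{q}a^{q}b^{n-q}H_q(x)H_{n-q}(y)$ for $a^2+b^2=1$ identifies the matrix $\tensor{M}$ of (\ref{matr}), and your factorization of $\det\tensor{M}$ into binomial coefficients, powers of $\sin\theta_p$ and a Vandermonde determinant in $\cot\theta_0,\dots,\cot\theta_n$ proves invertibility.

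The gap is in the two expansion formulas, and it is not the one you flagged. Carrying out exactly the extraction you describe (substitute $2\mu=\lambda+\bar\lambda$, $2\nu=-\mi(\lambda-\bar\lambda)$ on the right, take the coefficient of $\bar\lambda^{m}\lambda^{l-m}$) yields
\begin{equation*}
J_{m,l-m}(z)=\sum_{k=0}^{l}\frac{m!\,(l-m)!}{k!\,(l-k)!}\,(-\mi)^{l-k}\Big(\sum_{r+s=m}\binom{k}{r}\binom{l-k}{s}(-1)^{s}\Big)H_k(x)H_{l-k}(y),
\end{equation*}
i.e.\ the inner sum runs over $r+s=m$ with binomials $\binom{k}{r}\binom{l-k}{s}$ and a ratio of factorials appears; symmetrically, your second extraction produces $\binom{m}{r}\binom{l-m}{s}$ with $r+s=k$ rather than the stated $\binom{k}{r}\binom{l-k}{s}$ with $r+s=m$. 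These derived identities are true, and by uniqueness of the expansion in the orthogonal family $\set{H_k(x)H_{l-k}(y)}_{k=0}^{l}$ their coefficients must coincide with the stated ones; but that coincidence is precisely the self-duality of symmetric binary Krawtchouk polynomials, $\binom{l}{k}\sum_j(-1)^j\binom{k}{j}\binom{l-k}{m-j}=\binom{l}{m}\sum_j(-1)^j\binom{m}{j}\binom{l-m}{k-j}$, a substantive binomial identity — not a re-indexing plus a parity check absorbing $(-1)^{l-k}$, which is how you describe it. The cleanest repair inside your own framework is to prove the stated first formula directly by resummation: multiply its right-hand side by $\frac{\bar\lambda^{m}\lambda^{n}}{m!n!}$ (with $n=l-m$), sum over $m,n$, use $\frac{\bar\lambda^{m}\lambda^{n}}{m!n!}\binom{m}{r}\binom{n}{s}=\frac{\bar\lambda^{r}\lambda^{s}}{r!s!}\cdot\frac{\bar\lambda^{m-r}\lambda^{n-s}}{(m-r)!(n-s)!}$ together with $\mi^{l-k}(-1)^{l-m-s}=\mi^{\,m-r}(-\mi)^{\,n-s}$, and the fourfold sum factors through the two-variable Hermite generating function into $\exp\set{(\lambda+\bar\lambda)x+\mi(\bar\lambda-\lambda)y-2\lambda\bar\lambda}=\exp\set{\lambda\bar z+\bar\lambda z-2\abs{\lambda}^2}$; comparing coefficients of $\bar\lambda^{m}\lambda^{n}$ then gives the stated formula, and the second one follows in the same way (or by inversion). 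With that replacement, and keeping your Vandermonde argument, the proof is complete.
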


\section*{Acknowledgement}
 Y. Chen is supported by NSFC (No.11871079). Y. Liu is supported by NSFC (No. 11731009) and Center for  Statistical Science, PKU.

\end{document}